\title{An $\infty$-Laplacian for differential forms, and calibrated laminations}
\author{Aidan Backus}
\address{Department of Mathematics, Brown University}
\email{aidan\_backus@brown.edu}
\date{\today}
\keywords{laminations, $\infty$-Laplacian, convex duality, minimal hypersurfaces, calibrations, functions of least gradient}
\subjclass[2020]{primary: 35J94; secondary: 35J92, 53C38, 49N15, 49Q20}
\newcommand{\RR}{\mathbf{R}}
\newcommand{\Hyp}{\mathbf H}
\newcommand{\Sph}{\mathbf S}
\newcommand{\Ball}{\mathbf{B}}
\newcommand*\dif{\mathop{}\!\mathrm{d}}
\DeclareMathOperator{\id}{id}
\DeclareMathOperator{\Hom}{Hom}
\DeclareMathOperator{\PD}{PD}
\DeclareMathOperator{\coker}{coker}
\DeclareMathOperator{\supp}{supp}
\DeclareMathOperator{\sech}{sech}
\newcommand{\weakto}{\rightharpoonup}
\newcommand{\normal}{\mathbf n}
\newcommand{\vol}{\mathrm{vol}}
\newcommand{\Lip}{\mathrm{Lip}}
\newcommand{\Riem}{\mathrm{Riem}}
\newcommand{\Mass}{\mathbf M}
\newcommand{\Comass}{\mathbf L}
\newcommand{\dfn}[1]{\emph{#1}\index{#1}}
\newcommand{\loc}{\mathrm{loc}}
\newcommand{\cpt}{\mathrm{cpt}}
\newtheorem{theorem}{Theorem}[section]
\newtheorem{lemma}[theorem]{Lemma}
\newtheorem{proposition}[theorem]{Proposition}
\newtheorem{corollary}[theorem]{Corollary}
\newtheorem{conjecture}[theorem]{Conjecture}
\theoremstyle{definition}
\newtheorem{definition}[theorem]{Definition}
\newtheorem{warning}[theorem]{Warning}
\newtheorem{example}[theorem]{Example}
\numberwithin{equation}{section}
\def\Xint#1{\mathchoice
{\XXint\displaystyle\textstyle{#1}}%
{\XXint\textstyle\scriptstyle{#1}}%
{\XXint\scriptstyle\scriptscriptstyle{#1}}%
{\XXint\scriptscriptstyle\scriptscriptstyle{#1}}%
\!\int}
\def\XXint#1#2#3{{\setbox0=\hbox{$#1{#2#3}{\int}$ }
\vcenter{\hbox{$#2#3$ }}\kern-.6\wd0}}
\def\dashint{\Xint-}
\begin{document}
\begin{abstract}
Motivated by Thurston and Daskalopoulos--Uhlenbeck's approach to Teichm\"uller theory, we study the behavior of $q$-harmonic functions and their $p$-harmonic conjugates in the limit as $q \to 1$, where $1/p + 1/q = 1$.
The $1$-Laplacian is already known to give rise to laminations by minimal hypersurfaces; we show that the limiting $p$-harmonic conjugates converge to calibrations $F$ of the laminations.
Moreover, we show that the laminations which are calibrated by $F$ are exactly those which arise from the $1$-Laplacian.
We also explore the limiting dual problem as a model problem for the optimal Lipschitz extension problem, which exhibits behavior rather unlike the scalar $\infty$-Laplacian.
In a companion work, we will apply the main result of this paper to associate to each class in $H^{d - 1}$ a lamination in a canonical way, and study the duality of the stable norm on $H_{d - 1}$.
\end{abstract}

\maketitle

%%%%%%%%%%%%%%%%%%%%%%%%%%%%%%%%%%%%%%%%%%%%%%%%%%%%%%%
\section{Introduction}
\subsection{\texorpdfstring{$L^\infty$}{L-infinity} variational systems}
A basic problem in metric geometry is the \dfn{optimal Lipschitz extension problem}: given two metric spaces $M, N$ and a class $\mathscr F$ of Lipschitz maps $M \to N$, find $u \in \mathscr F$ which ``stretches $M$ as little as possible.''
When $M$ is a compact euclidean domain, $N = \RR$, and $\mathscr F$ is the set of Lipschitz extensions of a Lipschitz function on $\partial M$, then this problem is solved by the $\infty$-Laplacian \cite{Crandall2008}.
To be more precise, given a Lipschitz function $f$ on $\partial M$, the unique viscosity solution of
\begin{subequations}
\begin{empheq}[left=\empheqlbrace]{align}
&	\langle \nabla^2 u, \dif u \otimes \dif u\rangle = 0, \\
&	u|_{\partial M} = f
\end{empheq}
\end{subequations}
is the unique Lipschitz extension of $f$ to $M$ which \dfn{absolutely minimizes} its Lipschitz constant, in the sense that for every open $U \subseteq M$ and every Lipschitz function $v: U \to \RR$ such that $u|_{\partial U} = v|_{\partial U}$,
$$\Lip(u|_U) \leq \Lip(v).$$

This approach does not work if $N$ is a complete Riemannian manifold of dimension $\geq 2$.
As intimated by the use of viscosity solutions, the study of the $\infty$-Laplacian is based on the maximum principle, which only applies when $N = \RR$.
Daskalopoulos and Uhlenbeck proposed to study ``$\infty$-harmonic maps'', which are weak limits of solutions of a certain generalized $p$-Laplacian as $p \to \infty$ \cite{daskalopoulos2022}.
One has little pointwise control of the $\infty$-harmonic map, so at this time it is not known if $\infty$-harmonic maps locally minimize their Lipschitz constants.

Sundry other definitions of $\infty$-harmonic maps, also known as ``tight maps'', were considered by Sheffield and Smart, as well as Jensen \cite{Sheffield12}.
These definitions emphasize local minimality of the Lipschitz constant, but it is not known if such maps exist, nor what their relation with the Daskalopolous--Uhlenbeck $\infty$-harmonic maps is.
One particular difficulty is that if $u$ is a tight holomorphic function, then there is no reasonable PDE that $u$ should solve.

It is natural to look at other $L^\infty$ variational systems, besides the optimal Lipschitz extension problem, and hope that they serve as an easier toy problem.
In this paper, we study the problem of minimizing the $L^\infty$ norm of a closed $d - 1$-form.
Motivated by Daskalopoulos--Uhlenbeck $\infty$-harmonic maps, we consider solutions of the generalized $p$-Laplacian
\begin{subequations}\label{p tight intro}
\begin{empheq}[left=\empheqlbrace]{align}
&\dif F = 0, \\ 
&\dif^*(|F|^{p - 2} F) = 0.
\end{empheq}
\end{subequations}
We call such forms \dfn{$p$-tight}; they have recently found use in the computation of $L^p$ cohomology \cite{stern2024lp}.

We are interested in the weak limits of $p$-tight forms as $p \to \infty$.
We call such a form \dfn{tight}.

Tight forms are easier to understand than $\infty$-harmonic maps for two reasons.
For one, can study nonsmooth tight forms by passing to the convex dual problem, which is the \dfn{least gradient problem}: the problem of minimizing the total variation $\int_M \star |\dif v|$ of a function $v$ \cite{gorny2024leastgradient}.
The convex dual problem for $\infty$-harmonic maps into a symmetric space is a Lie algebra-valued $1$-Laplacian system \cite{daskalopoulos2022}, which presumably enjoys similar properties to the least gradient problem but has not been studied as much.
We show that on a closed manifold, every tight form is conjugate to a least gradient function in the following sense:

\begin{theorem}\label{existence of infinity tight forms}
Let $M$ be a closed oriented Riemannian manifold of dimension $d \geq 2$, and let $\rho \in H^{d - 1}(M, \RR)$ be a nonzero cohomology class.
Then there is a tight form $F$ which represents $\rho$, and minimizes its $L^\infty$ norm among all representatives of $\rho$.
Moreover, there exists a nonconstant function of least gradient $u$ on the universal cover $\tilde M$, such that $\dif u$ descends to $M$, the product of distributions $\dif u \wedge F$ is well-defined, and
\begin{equation}\label{max flow min cut}
\dif u \wedge F = \|F\|_{L^\infty} \star |\dif u|.
\end{equation}
\end{theorem}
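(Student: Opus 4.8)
The plan is to produce $F$ as a weak limit of $p$-tight forms as $p\to\infty$, produce $u$ as a normalized weak limit of their $p$-harmonic conjugates, and then recognize \eqref{max flow min cut} as the complementary slackness condition of the convex duality between the two. For the form $F$: for each $p>1$ minimize $F\mapsto\int_M|F|^p\dif\vol$ over the affine subspace $\{F\in L^p(M):\dif F=0,\ [F]=\rho\}$, where $\dif F=0$ is meant distributionally and $[F]=\rho$ means $\int_M F\wedge\eta=\langle\rho\cup[\eta],[M]\rangle$ for every smooth closed $1$-form $\eta$; this set is nonempty and weakly closed in $L^p$. The direct method gives a minimizer $F_p$, and computing $\tfrac{\dif}{\dif t}\big|_{t=0}\int_M|F_p+t\,\dif\beta|^p$ yields the Euler--Lagrange equation $\dif^*(|F_p|^{p-2}F_p)=0$, so $F_p$ solves \eqref{p tight intro} and is $p$-tight. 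Set $\lambda:=\inf\{\|G\|_{L^\infty}:\dif G=0,\ [G]=\rho\}>0$. By minimality and Hölder, $\|F_p\|_{L^p}\leq\lambda\,\vol(M)^{1/p}$, so $(F_p)_{p\geq q}$ is bounded in $L^q$ for each fixed $q$, and a diagonal argument extracts a subsequence with $F_p\weakto F$ in every $L^q$. Then $\dif F=0$ and $[F]=\rho$ (both pairings are weakly continuous, and the Poincaré pairing $H^{d-1}\times H^1\to\RR$ is perfect), $F$ is tight by definition, and weak lower semicontinuity gives $\|F\|_{L^q}\leq\liminf_p\|F_p\|_{L^q}\leq\lambda\,\vol(M)^{1/q}$; letting $q\to\infty$ shows $\|F\|_{L^\infty}=\lambda$. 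The same estimates run in reverse give $\|F_p\|_{L^p}\to\lambda$ and $\|F_p\|_{L^{p-1}}\to\lambda$. This settles the first assertion.

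For the function $u$: since $\dif^*(|F_p|^{p-2}F_p)=0$, the $1$-form $\star(|F_p|^{p-2}F_p)$ is closed; its pullback to $\tilde M$ is exact, say equal to $\dif u_p$, and $\dif u_p$ descends to $M$. Because $|\dif u_p|=|F_p|^{p-1}$, one checks that $F_p=\pm|\dif u_p|^{q-2}\star\dif u_p$ where $1/p+1/q=1$, and then $\dif F_p=0$ becomes $\dif^*(|\dif u_p|^{q-2}\dif u_p)=0$, so $u_p$ is $q$-harmonic. Normalize $u_p$ by an additive constant and a scalar so that $\int_M|\dif u_p|\,\dif\vol=1$; then $(u_p)$ is precompact in $BV_{\loc}(\tilde M)$ (using a Poincaré inequality on a fundamental domain and the boundedness of the period classes $[\dif u_p]\in H^1(M,\RR)$), and along a further subsequence $u_p\to u$ in $L^1_{\loc}(\tilde M)$ with $u\in BV_{\loc}(\tilde M)$ and $\dif u$ descending to $M$. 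The basic finite-$p$ fact is the pointwise identity
\begin{equation*}
\dif u_p\wedge F_p=\langle|F_p|^{p-2}F_p,\,F_p\rangle\,\vol=|F_p|^p\,\vol=|F_p|\,\star|\dif u_p|\geq 0,
\end{equation*}
whose total mass is $\|F_p\|_{L^p}^p/\|F_p\|_{L^{p-1}}^{p-1}$; by Hölder this is $\geq\|F_p\|_{L^p}\,\vol(M)^{-1/p}\to\lambda$, so $\liminf_p\int_M\dif u_p\wedge F_p\geq\lambda>0$. This lower bound will force $u$ to be nonconstant.

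For the identity \eqref{max flow min cut}: the product $\dif u\wedge F$ is well-defined because $\star F$ is a bounded, divergence-free vector field (as $F\in L^\infty$ is closed) and the Anzellotti pairing of such a field against the gradient measure of a $BV$ function is a well-defined Radon measure; Cauchy--Schwarz for this pairing gives the easy bound $\dif u\wedge F\leq\|F\|_{L^\infty}\,\star|\dif u|=\lambda\,\star|\dif u|$, whence $\int_M\dif u\wedge F\leq\lambda\,|\dif u|(M)\leq\lambda$ by lower semicontinuity of the total variation. The crux is to pass to the limit in the finite-$p$ identity above, i.e. to show the nonnegative measures $\dif u_p\wedge F_p$ converge weakly-$*$ to $\dif u\wedge F$: this is a compensated-compactness (div--curl) statement exploiting that $F_p$ and $\dif u_p$ are both closed, and it is delicate because $\dif u_p$ is controlled only in the space of measures. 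Granting it, $\int_M\dif u\wedge F=\lim_p\int_M\dif u_p\wedge F_p\geq\lambda$ (total mass is preserved since $M$ is closed and the measures are nonnegative), so combined with the two inequalities above $\int_M\dif u\wedge F=\lambda$, $|\dif u|(M)=1$, and $u$ is nonconstant; and since $\dif u\wedge F\leq\lambda\,\star|\dif u|$ with equal total mass, in fact $\dif u\wedge F=\lambda\,\star|\dif u|$ as measures, which is \eqref{max flow min cut}.

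Finally $u$ is of least gradient: \eqref{max flow min cut} says $F/\lambda$ restricts to the volume form on the reduced level sets of $u$, i.e. $F/\lambda$ calibrates them, so they are mass-minimizing — alternatively this follows from $u$ being a limit of $q$-harmonic functions as $q\to 1^+$. The main obstacle is the compensated-compactness step in the previous paragraph: the direct method, Hölder interpolation, $BV$-compactness, and the Anzellotti pairing are routine, but the no-duality-gap passage to the limit — equivalently, the statement that the $p$-tight forms saturate the $L^\infty$ bound exactly on the carrier of $\dif u$ — is where the real work lies, and it is the point at which an abstract Fenchel--Rockafellar argument, identifying the dual of the $L^\infty$ minimization with the least gradient problem, could be substituted if the direct limiting argument proves too technical.
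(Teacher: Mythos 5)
Your architecture coincides with the paper's (Proposition \ref{existence infinity} plus Theorem \ref{existence 1}): extract $F$ as a weak limit of $p$-tight forms, extract $u$ as a renormalized limit of their conjugates, and obtain (\ref{max flow min cut}) by squeezing the Anzellotti pairing between a pointwise upper bound and a global lower bound. Your first paragraph and the $BV$-compactness step are correct. But the step you leave as ``granting it'' is a genuine gap, and moreover you have misdiagnosed the tool needed to close it: no div--curl or compensated-compactness statement for the product measures is required, and trying to prove weak-$*$ convergence of $\dif u_p \wedge F_p$ to $\dif u \wedge F$ would indeed be delicate (and is more than the theorem needs).

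What is actually needed is only convergence of the scalars $\int_M \dif u_p \wedge F_p \to \int_M \dif u \wedge F$, and on a \emph{closed} manifold this is purely cohomological: both factors are closed forms in dual Lebesgue spaces, so $\int_M \dif u_p \wedge F_p = \langle [\dif u_p] \cup \rho, [M]\rangle$ depends only on the class $[\dif u_p]$, which lives in the finite-dimensional space $H^1(M, \RR)$ and converges to $[\dif u]$ (this is part of what your $BV_\loc$ compactness already yields, cf.\ Lemma \ref{L1 convergence preserves pi1}); the limit is then identified with $\int_M \dif u \wedge F$ by Stokes for the Anzellotti pairing. Combined with your lower bound $\liminf_p \int_M \dif u_p \wedge F_p \geq \lambda$ and lower semicontinuity of the total variation, this gives $\int_M \dif u \wedge F \geq \lambda\, \Mass(\dif u)$; the reverse inequality holds localized to every Borel set by Proposition \ref{Anzellotti wedge product exists} and outer regularity, so the density of $\dif u \wedge F$ with respect to $\lambda \star |\dif u|$ is at most $1$ with average exactly $1$, hence equals $1$ a.e.\ (Lemma \ref{measurable function is 1}). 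This cohomological shortcut is also exactly where closedness of $M$ enters: for the Dirichlet problem the pairing is no longer a homological invariant and the conclusion genuinely fails (Example \ref{boundaries bad}). Two minor points: least gradient should be deduced from (\ref{max flow min cut}) by the calibration argument (Lemma \ref{1 extremality implies least gradient}) rather than by appeal to $u$ being a limit of $q$-harmonic functions, and your normalization $\int_M |\dif u_p| = 1$ works but forces you to track the constant $\|F_p\|_{L^{p-1}}^{1-p}$ everywhere; the paper's choice $k_p^{1-p} = \int_M \star|F_p|^p$ is tuned so that the renormalized strong duality identity (\ref{strong duality}) passes to the limit cleanly.
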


Since least gradient functions have a geometric interpretation as area-minimizing measured laminations \cite{BackusCML}, tight forms have an appealing interpretation as particularly well-behaved measurable calibrations which certificate that the dual lamination is area-minimizing.
In particular, up to a rescaling, every tight form calibrates the leaves of an area-minimizing lamination.

% A localized version of Theorem \ref{existence of infinity tight forms}, combined with a calibration argument, allows us to show that tight forms are absolute minimizers:

% \begin{theorem}\label{absolute minimizer 1}
% Let $M$ be an oriented Riemannian manifold of dimension $d \geq 2$ (possibly with boundary), and let $F \in L^\infty(M, \Omega^{d - 1}_{\rm cl})$ be a tight form.
% Then for every precompact open set $U$ with Lipschitz boundary and $H^1(U, \RR) = 0$,
% $$\|F\|_{L^\infty(U)} = \|F\|^{L^\infty(\partial U)}.$$
% \end{theorem}

% Here, $\|F\|_{L^\infty(\partial U)}$ is the supremum along $\partial U$ of the upper-semicontinuous envelope of $|F|$.
% The cohomological assumption in Theorem \ref{absolute minimizer 1} is used to recover a dual function of least gradient from its exterior derivative.
% This assumption cannot be removed, as we show by an example.

Unlike the $\infty$-harmonic maps, which are not associated to a well-defined PDE, tight forms are associated to the PDE
\begin{subequations}\label{tight Einstein}
\begin{empheq}[left=\empheqlbrace]{align}
	&\dif F = 0 \label{tight Einstein closed}, \\
	&(\star F) \wedge \dif \star F = 0, \label{tight Einstein Katzourakis}\\
	&(\nabla_\alpha F_{\beta_1 \cdots \beta_{d - 1}}) F^{\beta_1 \cdots \beta_{d - 1}} {F^\alpha}_{\gamma_1 \cdots \gamma_{d - 2}} = 0 \label{tight Einstein big},
\end{empheq}
\end{subequations}
in the sense that if the $p$-tight forms converge to a tight form $F$ in a strong enough topology, then $F$ satisfies (\ref{tight Einstein}).
We highlight (\ref{tight Einstein Katzourakis}), as it is does not have an analogue for the scalar $\infty$-Laplacian, but instead is a genuinely vector-valued phenomenon \cite{Katzourakis12}.
Together, the three equations (\ref{tight Einstein}) say that if $|F|$ attains a local maximum at $x$, then there is an integral hypersurface $N$ of $\ker(\star F)$ through $x$, and $F$ calibrates $N$.

Since $\infty$-harmonic functions are characterized as absolutely minimizing their Lipschitz constants, we shall be interested in if tight forms absolutely minimize their $L^\infty$ norms -- namely, whether a tight form minimizes its $L^\infty$ norm among all forms with the same trace in every open set.
Using the perspective that tight forms are calibrations, we prove:

\begin{theorem}\label{tight are absolute minimizers}
Let $F$ be a closed $C^1$ $d - 1$-form on a compact Riemannian manifold (possibly with boundary).
If $F$ solves (\ref{tight Einstein}), then for every open $U \subseteq M$ with $H_{d - 1}(U, \RR) = 0$,
$$\|F\|_{C^0(U)} = \|F\|_{C^0(\partial U)}.$$
In particular, $F|_U$ minimizes its $C^0$ norm among all closed forms of the same trace.
\end{theorem}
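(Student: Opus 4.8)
The plan is to argue by contradiction, ruling out a strict interior maximum of $|F|$; the geometric input is the characterization recalled above, that at a local maximum of $|F|$ the distribution $\ker(\star F)$ is integrable and the leaf through that point is calibrated by $F$.

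The displayed equality immediately gives the ``in particular'' clause: if $G$ is a closed $C^1$ $(d-1)$-form on $\overline{U}$ with the same trace (i.e.\ restriction) as $F$ on $\partial U$, then $|G| = |F|$ pointwise on $\partial U$, so $\|G\|_{C^0(U)} = \|G\|_{C^0(\overline{U})} \ge \|G\|_{C^0(\partial U)} = \|F\|_{C^0(\partial U)} = \|F\|_{C^0(U)}$. So it suffices to prove the equality. Since $F$ is continuous and $U$ is dense in $\overline{U}$, one always has $\|F\|_{C^0(\partial U)} \le \|F\|_{C^0(\overline{U})} = \|F\|_{C^0(U)}$, so I must show the constant $c := \|F\|_{C^0(\overline{U})}$ is attained on $\partial U$. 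Suppose not; we may assume $c > 0$ (else $F \equiv 0$), and then $\Omega := \{x \in \overline{U} : |F(x)| = c\}$ is a nonempty compact subset of the open set $U$.

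Next I would analyze $\Omega$ using (\ref{tight Einstein}). Fix $x_0 \in \Omega$, an interior local maximum of $|F|$. Equation (\ref{tight Einstein big}), contracted with $F$, says that $\nabla|F|^2$ is parallel to $(\star F)^\sharp$ wherever $\star F \ne 0$; hence $|F|$ is constant along leaves of the (by (\ref{tight Einstein Katzourakis}), integrable) distribution $\ker(\star F)$. Combined with the structural statement recalled above, the leaf $N$ through $x_0$ satisfies $|F| \equiv c$ on $N$; and since $(d-1)$-forms are simple — so the comass of $F$ equals $|F|$ and $F$ restricts to $\pm|F|$ times the volume form on any hypersurface tangent to $\ker(\star F)$ — we get $F|_N = c\,\vol_N$ once $N$ is cooriented consistently by $(\star F)^\sharp$. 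Extending $N$ to a maximal leaf, it stays inside $\Omega$, so $\overline{N} \subseteq \Omega$ is compact: $N$ is a complete leaf trapped in the interior of $U$. Since $|F| \le c$ on $\overline{U}$ and $F$ is closed, the calibration inequality forces $N$ to be area-minimizing in every small ball of $U$, hence a smooth minimal hypersurface; thus $\Omega$ is a union of such leaves, and the closure of any one of them is a compact lamination $\Lambda \subseteq \Omega \Subset U$ by area-minimizing hypersurfaces calibrated by $c^{-1}F$.

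Finally I would derive a contradiction with $H_{d-1}(U;\RR) = 0$. The clean case is when $\Lambda$ contains a compact leaf $L$: then $\int_L F = c\,\vol(L) > 0$, whereas $F$ closed and $[L] = 0$ in $H_{d-1}(U;\RR)$ give $\int_L F = \langle [L], [F]\rangle = 0$. The remaining case — a compact minimal lamination $\Lambda$ with no compact leaf — is the main obstacle. One would like to replace $[L]$ by a \emph{foliated cycle}: a nonzero closed $(d-1)$-current $T$ supported in $\Lambda \Subset U$, directed and positively cooriented by $\ker(\star F)$, so that $\langle T, F\rangle = c\,\Mass(T) > 0$; since $H^{d-1}_{dR}(U) \cong H^{d-1}(U;\RR) = 0$ gives $F = d\omega$ on $U$, Stokes would then yield $\langle T, F\rangle = \langle \partial T, \omega\rangle = 0$, a contradiction. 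However, essentially this same computation shows a positively cooriented foliated cycle supported in $\Omega$ cannot coexist with a global primitive $\omega$ of $F$, so the real content must be to rule this case out — i.e.\ to show that a calibrated area-minimizing lamination trapped inside a homologically trivial region necessarily contains a compact leaf, or contradicts $H_{d-1}(U;\RR)=0$ directly. I expect this to be where the structure theory of calibrated laminations and the duality with functions of least gradient are essential. (In dimension $d = 2$, (\ref{tight Einstein big}) already forces $|F|$ to be locally constant wherever $F \ne 0$, so the statement is elementary there.)
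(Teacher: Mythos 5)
Your overall strategy is the same as the paper's: reduce to an interior local maximum of $|F|$, use (\ref{tight Einstein Katzourakis}) and (\ref{tight Einstein big}) to produce the integral hypersurface (``stream manifold'') $N$ of $\ker(\star F)$ through the maximum point, on which $|F|$ is constant and which $F/|F|$ calibrates, and then play this against $H_{d-1}(U, \RR) = 0$. Your derivation of the ``in particular'' clause, the trivial inequality $\|F\|_{C^0(\partial U)} \le \|F\|_{C^0(U)}$, and the compact-leaf case ($\int_L F = c\,\vol(L) > 0$ versus $F$ exact on $U$) all match the paper's Theorem \ref{ABC inequality} and Corollary \ref{tight and integrable implies infinity maxwell}.

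The gap is the one you name yourself: you do not dispose of a noncompact leaf with compact closure in $U$. The paper closes this differently, and more cheaply than your foliated-cycle sketch. It never passes to $\overline{N}$ or to a lamination: it argues that $N$, being calibrated, is area-minimizing among hypersurfaces with the same boundary (all competitors being homologous because $H_{d-1}(U,\RR)=0$), so $N$ cannot be boundaryless (compare with the empty hypersurface); and since the stream manifolds foliate $U \setminus \{F = 0\}$ while $N$ avoids a neighborhood of $\{F=0\}$, the boundary of the maximal stream manifold must be a nonempty subset of $\partial U$. This produces a path from $x$ to $\partial U$ along which $|F|$ is constant, and that already gives $|F(x)| \le \|F\|_{C^0(\partial U)}$: the conclusion concerns the \emph{full trace} of $F$ on $\partial U$, so one only needs the leaf to touch $\partial U$, not to manufacture a cycle or a Stokes contradiction. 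You were aiming at something stronger than necessary. That said, your worry is not misplaced: the paper's assertion that a non-closed leaf must exit through $\partial U$ is precisely the step that excludes a complete noncompact leaf trapped in $U$, and the paper dispatches it in one sentence, so you have correctly located the delicate point even though you did not resolve it. Finally, your parenthetical about $d = 2$ is incorrect: there (\ref{tight Einstein big}) only forces $|F|$ to be constant along the integral curves of $\ker(\star F)$ (the streamlines), not locally constant --- see Example \ref{boundaries bad}, where $|F| = 1/r$.
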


This theorem is particularly remarkable because the analogous result fails for more general $L^\infty$ variational systems \cite{Katzourakis15}; the difference is that we can think of solutions of (\ref{tight Einstein}) as calibrations.
Motivated by the fact that $\infty$-harmonic functions absolutely minimize their Lipschitz constant, we expect the conclusion of this theorem to hold for every tight form, not just those which meet strong regularity hypotheses.
We were not able to prove this, since we do not have access to the maximum principle; therefore we conjecture:

\begin{conjecture}
Let $F$ be a tight form and let $U$ be an open set such that $H_{d - 1}(U, \RR) = 0$.
Then the restriction of $|F|$ to $\overline U$ attains its maximum on $\partial U$.
\end{conjecture}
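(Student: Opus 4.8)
The plan is to prove the asserted identity $\|F\|_{C^0(U)} = \|F\|_{C^0(\partial U)}$ by reading the system (\ref{tight Einstein}) geometrically --- it says that $F$ calibrates a codimension-one foliation near the maximum set of $|F|$ --- and then contradicting $H_{d-1}(U,\RR) = 0$ by producing a nonzero foliation cycle supported in that maximum set.

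\emph{Dualizing the equations.} Set $\omega := \star F$, a $1$-form with $|\omega| = |F|$. Then (\ref{tight Einstein closed}) reads $\dif^*\omega = 0$, (\ref{tight Einstein Katzourakis}) reads $\omega\wedge\dif\omega = 0$, and, because $\nabla_\alpha|F|^2 = 2(\nabla_\alpha F_{\beta_1\cdots\beta_{d-1}})F^{\beta_1\cdots\beta_{d-1}}$, equation (\ref{tight Einstein big}) reads $\iota_{\nabla|F|^2}F = 0$; that is, $\nabla|F|^2$ is everywhere parallel to $\omega^\sharp$. On $\{|F| > 0\}$ the Frobenius theorem applied to $\omega\wedge\dif\omega = 0$ yields a coorientable codimension-one foliation $\mathcal F$ with $T\mathcal F = \ker\omega$; since $\nabla|F|^2$ is normal to the leaves, $|F|$ is constant along each leaf; and evaluating $\star\omega = \iota_{\omega^\sharp}\vol$ on a frame adapted to a leaf $N$ gives $i_N^*F = \pm|F|\,\vol_N$. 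Hence, with $c := \|F\|_{C^0(\overline U)}$ (the case $c = 0$ being trivial), $F/c$ is a calibration whose calibrated submanifolds include every leaf of $\mathcal F$ contained in the maximum set $\Sigma := \{x \in \overline U : |F(x)| = c\}$.

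\emph{Building the foliation cycle.} Suppose, for contradiction, that $c > c' := \|F\|_{C^0(\partial U)}$. Then $\Sigma$ is compact, by continuity of $|F|$ on $\overline U$, and contained in $U$, since $|F| < c$ on $\partial U$; and it is $\mathcal F$-saturated, being a level set of the leafwise-constant function $|F|$. Each leaf $N \subseteq \Sigma$ is calibrated \emph{with equality} by $F/c$, hence locally area-minimizing in $U$, hence --- by the monotonicity formula together with area comparison against small geodesic spheres --- of at most polynomial intrinsic volume growth. By Plante's theorem, a codimension-one foliation of a compact set all of whose leaves have subexponential growth carries a nonzero holonomy-invariant transverse measure; equivalently (compare the area-minimizing lamination construction of \cite{BackusCML}), $\mathcal F|_\Sigma$ supports a nonzero closed $(d-1)$-current $T$ with $\supp T \subseteq \Sigma$, $\Mass(T) > 0$, and, with leaves oriented by the coorientation $\omega$, $\langle T, F\rangle = c\,\Mass(T) > 0$.

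\emph{Concluding, and the obstacle.} From $H_{d-1}(U,\RR) = 0$ and the universal coefficient theorem, $H^{d-1}_{\mathrm{dR}}(U) = 0$, so $F = \dif\beta$ on $U$ (smoothing $F$ first, if needed, so that $\beta$ may be taken $C^1$); since $\partial T = 0$ and $\supp T \Subset U$, Stokes' theorem for currents gives $\langle T, F\rangle = \langle T,\dif\beta\rangle = \langle\partial T,\beta\rangle = 0$, contradicting $\langle T, F\rangle > 0$. Therefore $c = c'$, which is the claimed identity; the ``in particular'' then follows because the same geometry forces $\|F\|_{C^0(\partial U)} = \|i^*F\|_{C^0(\partial U)}$ (at a boundary maximizer $x_0$ of $|F|$ the leaf through $x_0$ is tangent to $\partial U$, as $\nabla|F|(x_0)$, being normal to the leaves, is orthogonal to $T_{x_0}\partial U$), so any closed $(d-1)$-form $G$ with $i^*G = i^*F$ has $\|G\|_{C^0(U)} \geq \|i^*G\|_{C^0(\partial U)} = \|i^*F\|_{C^0(\partial U)} = \|F\|_{C^0(U)}$. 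I expect the middle step to be the main obstacle: $\mathcal F$ is defined only off the zero set of $F$, the maximum set $\Sigma$ may be transversally exceptional with no compact leaf, and a single non-compact calibrated leaf, though a cycle, may carry infinite mass --- so one genuinely needs the volume-growth bound for calibrated (hence minimal) hypersurfaces in order to run Plante's argument and extract a finite-mass foliation cycle. The remaining ingredients --- the Hodge-star identities, compactness of $\Sigma$, existence and regularity of the primitive $\beta$, and Stokes for normal currents --- are routine.
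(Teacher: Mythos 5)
The statement you are proving is the paper's open \emph{conjecture}, and the fundamental gap in your proposal is that it never engages with what makes it a conjecture rather than a theorem. A ``tight form'' is defined (Proposition \ref{existence infinity}) as a weak-$L^r$ limit of $p$-tight forms; it is merely an $L^\infty$ form of best comass. It is \emph{not} known to be $C^1$, and it is \emph{not} known to satisfy the system (\ref{tight Einstein}): the paper derives (\ref{tight Einstein}) only under the assumption that the $p$-tight approximants converge in $C^1$, which is far stronger than the weak convergence actually established. Your entire argument --- forming $\omega = \star F$, computing $\nabla|F|^2$, invoking Frobenius to get the foliation $\mathcal F$, saturating the maximum set $\Sigma$, running monotonicity and Plante on the leaves --- presupposes exactly the hypotheses of Theorem \ref{tight are absolute minimizers} (a $C^1$ solution of (\ref{tight Einstein})), which the paper already proves by a closely related calibration argument (Theorem \ref{ABC inequality}: the stream manifold through an interior maximum is area-minimizing, hence by $H_{d-1}(U,\RR)=0$ must have nonempty boundary on $\partial U$, and $|F|$ is constant along it). The conjecture is precisely the assertion that the conclusion survives when those regularity hypotheses are dropped, and the paper states explicitly that it could not prove this because no maximum principle is available. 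So what you have written is, at best, an alternative route to Theorem \ref{ABC inequality} (foliation cycle plus exactness, rather than a path along a calibrated leaf), not a proof of the conjecture.

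Even judged as a proof of the $C^1$ case, two steps deserve more care. First, the Plante step: you need subexponential \emph{intrinsic} growth of leaves of the foliated compact set $\Sigma$ to extract a holonomy-invariant transverse measure, and you should verify that the extrinsic area bound from monotonicity for the calibrated (area-minimizing) leaves actually controls intrinsic leafwise growth in the sense Plante requires; this is plausible but not automatic, and the paper avoids it entirely. Second, ``smoothing $F$ first, if needed, so that $\beta$ may be taken $C^1$'' is not legitimate: mollifying $F$ destroys the calibration identity $\langle T, F\rangle = c\,\Mass(T)$ on the nose, and you must instead pair the normal current $T$ with a continuous primitive obtained from a regularized Poincar\'e lemma (as in Proposition \ref{Hodge theorem}) on a neighborhood of $\supp T$. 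Neither of these is the main obstacle, though; the main obstacle is the regularity of $F$ itself.
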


The converse to Theorem \ref{tight are absolute minimizers} does not hold; instead, absolute minimizers only satisfy (\ref{tight Einstein closed}) and (\ref{tight Einstein big}).

By the duality Theorem \ref{existence of infinity tight forms}, tight forms on a closed manifold always calibrate a hypersurface, even if they are not smooth.
In turn, we have seen that the existence of calibrated hypersurfaces is crucial to the proof of Theorem \ref{tight are absolute minimizers}.
This suggests a bold but vague conjecture: the correct definition of solution of an $L^\infty$ variational problem should be based on convex duality with a $BV$ variational problem.
An optimist for our bold conjecture can point to the recent work \cite{Mazon14,górny2021applications,górny2022dualitybased} which studies $BV$ problems using their duality with $L^\infty$ problems, as well as recent work \cite{daskalopoulos2023,katzourakis2024minimisers} which studies $L^\infty$ problems using their dual $BV$ problems.
However, the skeptic will point to Example \ref{boundaries bad}, which gives an obstruction to the existence of a conjugate least gradient function of a tight form when $\partial M$ is nonempty.

%%%%%%%%%%%%%%%%%
\subsection{Duality between calibrations and transverse measures}
The optimal Lipschitz extension problem naturally arises in Teichm\"uller theory.
Let $M$ be a closed surface of genus $g \geq 2$.
Thurston introduced a Finsler metric on the Teichm\"uller space $\mathscr T_g$ of hyperbolic structures on $M$ \cite{Thurston98}.
For any two hyperbolic structures $\rho, \sigma \in \mathscr T_g$, let $L(\rho, \sigma)$ be the infimum of Lipschitz constants of maps $f: (M, \rho) \to (M, \sigma)$ homotopic to $\id_M$.
Then the \dfn{Thurston metric} on $\mathscr T_g$ is $\log L$.

Thurston showed that his metric is intimately related to the structure of the geodesic laminations on $M$.
Any geodesic lamination $\lambda$ on $(M, \rho)$ can be deformed to a geodesic lamination on $(M, \sigma)$; the deformation will stretch each leaf of $\lambda$ by some factor which we call the \dfn{stretch ratio} of $\lambda$.
If $K(\rho, \sigma)$ is the supremum of the stretch ratios over all geodesic laminations, then Thurston showed that
\begin{equation}\label{Thurston MFMC formula}
L(\rho, \sigma) = K(\rho, \sigma).
\end{equation}
Furthermore, he constructed a \dfn{canonical maximally stretched lamination}, which every lamination which realizes the supremum in the definition of $K(\rho, \sigma)$ embeds in.

Thurston's proof of (\ref{Thurston MFMC formula}) is a \emph{tour de force} of geometric topology, but he conjectured that there should be an easier and more general proof \cite[Abstract]{Thurston98}:

\begin{quote}
I currently think that a characterization of minimal stretch maps should be possible in a considerably more general context (in particular, to include some version for all Riemannian surfaces), and it should be feasible with a simpler proof based on more general principles -- in particular, the max flow min cut principle, convexity, and $L^0 \leftrightarrow L^\infty$ duality. \dots
I also expect that this theory fits into a context including $L^p$ comparisons \dots. 
\end{quote}

This conjecture has recently been proven by Daskalopoulos and Uhlenbeck \cite{daskalopoulos2022,uhlenbeck2023noether,daskalopoulos2023}.
For our purposes, the salient parts of the proof can already been seen in the toy problem of optimal Lipschitz maps to $\Sph^1$ \cite{daskalopoulos2020transverse}.
Let $M$ be a closed hyperbolic surface of fundamental group $\Gamma$, let $\rho \in \Hom(\Gamma, \RR)$, and let $u$ be a $\rho$-equivariant $\infty$-harmonic function on the universal cover $\Hyp^2$.
Then $u$ minimizes $\Lip(u) = \|\dif u\|_{L^\infty}$ among all $\rho$-equivariant functions, and we can find a conjugate function of least gradient $v$.
The analogue of (\ref{Thurston MFMC formula}) follows from their main theorem: there is a geodesic lamination $\lambda$ of $M$, such that $|\dif u|$ attains its maximum on $\lambda$ and $|\dif v|$ is a transverse measure to $\lambda$.

If $u$ is an $\infty$-harmonic function on $\Hyp^2$ such that $\Lip(u) = 1$, and $\lambda$ is the associated geodesic lamination, then one can think of $\dif u$ as a calibration which calibrates the leaves of $\lambda$.
This theorem fits into a circle of general results suggesting a general duality theorem, where on one side we have calibrations arising from an $L^\infty$ variational problem, and on the other side we have transverse measures arising from functions of least gradient:
\begin{enumerate}
\item The author's previous work \cite{BackusCML} establishes that every function $u$ of least gradient induces a transverse measure $|\dif u|$ for the lamination $\lambda$ of area-minimizing hypersurfaces whose leaves are the level sets of $u$.
(Briefly, $\dif u$ is the ``Ruelle-Sullivan current'' defining $\lambda$.)
\item Using approximation by the $q$-Laplacian, Maz\'on, Rossi, and Segura de Le\'on \cite{Mazon14} showed that every function of least gradient on a compact domain is dual to a calibration $d - 1$-form, though they did not use this language.
\item Bangert and Cui \cite{bangert_cui_2017} showed that if $F$ is a continuous calibration $d - 1$-form on a closed manifold of dimension $d \leq 7$, and $F$ minimizes its $L^\infty$ norm in its cohomology class, then there is a measured lamination $\lambda$ on $M$ such that $F$ calibrates the leaves of $\lambda$.
\end{enumerate}

The present paper unifies the results of \cite{daskalopoulos2020transverse,bangert_cui_2017}, as well as the analogue of \cite{Mazon14} on closed manifolds.
By Theorem \ref{existence of infinity tight forms}, we can find a tight form $F$ in every nonzero cohomology class $\rho \in H^{d - 1}(M, \RR)$, and a least gradient function $u$ dual to $F$ in the sense that 
$$\dif u \wedge F = \|F\|_{L^\infty} \star |\dif u|.$$
If $\kappa$ is a measured oriented lamination, we say that $\kappa$ is \dfn{dual to $\rho$} if for some tight form $F$ representing $\rho$, $F/\|F\|_{L^\infty}$ is a calibration for the leaves of $\kappa$.
In that case, every leaf of $\kappa$ is a minimal hypersurface, and $\kappa$ minimizes its mass in its homology class.
Combining Theorem \ref{existence of infinity tight forms} with \cite[Theorem B]{BackusCML}, we conclude:

\begin{corollary}\label{existence of dual laminations}
Let $M$ be a closed oriented Riemannian manifold of dimension $2 \leq d \leq 7$.
For every nonzero cohomology class $\rho \in H^{d - 1}(M, \RR)$, there exists a measured oriented lamination which is dual to $\rho$.
\end{corollary}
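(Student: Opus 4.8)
The plan is to assemble the conclusion from two results already available: the duality Theorem \ref{existence of infinity tight forms}, which produces both a tight form and its conjugate least gradient function, and \cite[Theorem B]{BackusCML}, which organizes the level sets of a least gradient function into a measured oriented lamination. Concretely, since $\rho \neq 0$, Theorem \ref{existence of infinity tight forms} gives a tight form $F$ representing $\rho$ which minimizes $\|F\|_{L^\infty}$ in $\rho$, together with a nonconstant function of least gradient $u$ on $\tilde M$ such that $\dif u$ descends to $M$, the product $\dif u \wedge F$ is a well-defined (order-zero) distribution, and
\[
\dif u \wedge F = \|F\|_{L^\infty}\, \star|\dif u|
\]
as measures on $M$. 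Note $|F| \leq \|F\|_{L^\infty}$ a.e., and $F$ is closed by (\ref{tight Einstein closed}); the candidate calibration will be $F/\|F\|_{L^\infty}$.

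Next I would feed $u$ into \cite[Theorem B]{BackusCML}. Because $2 \leq d \leq 7$, there are no singularities of codimension-one area-minimizing currents, so the super- and sub-level sets of $u$ are area-minimizing Caccioppoli sets with smooth embedded reduced boundary, and $u$ induces a measured oriented lamination $\kappa$ whose leaves are the level sets of $u$ and whose transverse measure is $|\dif u|$. Since $\dif u$ descends to $M$, the function $u$ is equivariant: $u\circ\gamma - u$ is a constant for each deck transformation $\gamma$, so $\Gamma$ permutes the level sets and $\kappa$ descends to a lamination on $M$ itself. At $|\dif u|$-a.e.\ point $x$ the polar decomposition $\dif u = \sigma\,|\dif u|$ defines a unit conormal $\sigma(x)$, and by the structure theory of \cite[Theorem B]{BackusCML} the approximate tangent hyperplane $\sigma(x)^{\perp} = \ker\sigma(x)$ coincides with the tangent plane $T_x$ of the smooth leaf of $\kappa$ through $x$.

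It then remains to verify that $F/\|F\|_{L^\infty}$ calibrates the leaves of $\kappa$, i.e.\ that it restricts to the volume form of $T_x$ for $\kappa$-a.e.\ leaf point $x$ (closedness and the comass bound $|F/\|F\|_{L^\infty}| \le 1$ being already in hand). Using the pointwise identity $\sigma \wedge F = \langle F, \star\sigma\rangle\,\vol_M$, equation (\ref{max flow min cut}) becomes $\langle F(x), \star\sigma(x)\rangle = \|F\|_{L^\infty}$ for $|\dif u|$-a.e.\ $x$. Since $\star\sigma(x)$ is a unit simple $(d-1)$-covector and $|F(x)| \leq \|F\|_{L^\infty}$, the Cauchy--Schwarz inequality is saturated, forcing $F(x) = \|F\|_{L^\infty}\,\star\sigma(x)$; and $\star\sigma(x)$ is exactly the volume form of $\sigma(x)^{\perp} = T_x$. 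Hence $F/\|F\|_{L^\infty}$ calibrates the leaves of $\kappa$, so $\kappa$ is dual to $\rho$ (and, as noted before the statement, every leaf is then minimal and $\kappa$ minimizes mass in its homology class).

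The genuinely hard analysis lives entirely in the two cited inputs: the existence of the tight form together with its conjugate least gradient function (Theorem \ref{existence of infinity tight forms}), and the regularity and lamination structure of level sets of least gradient functions in \cite[Theorem B]{BackusCML}, where the hypothesis $d \leq 7$ enters. The only point requiring care in the assembly is the bookkeeping in the last paragraph: one must check that (\ref{max flow min cut}), an equality of measures, genuinely encodes the pointwise calibration identity $|\dif u|$-almost everywhere, and that the $|\dif u|$-null sets discarded there are compatible with the decomposition of $\operatorname{supp}|\dif u|$ into the smooth leaves of $\kappa$; both are immediate from the structure theory of \cite[Theorem B]{BackusCML}.
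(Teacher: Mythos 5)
Your overall route is the same as the paper's: produce the tight form $F$ and its $1$-harmonic conjugate $u$ via Theorem \ref{existence of infinity tight forms}, use \cite[Theorem B]{BackusCML} (through Theorem \ref{1 harmonic is MOML}) to realize $\dif u$ as the Ruelle--Sullivan current of a measured oriented lamination $\kappa$, and then show that $F/\|F\|_{L^\infty}$ calibrates $\kappa$. The first two steps are fine. The gap is in the last step, which you dispatch with a pointwise Cauchy--Schwarz saturation argument at ``$|\dif u|$-a.e.\ $x$.'' Two problems. First, $F$ is merely an $L^\infty$ form and $|\dif u|$ is in general singular with respect to the Riemannian volume (it is carried by the lamination, which can be Lebesgue-null), so ``$F(x)$ for $|\dif u|$-a.e.\ $x$'' is not defined; the product $\dif u \wedge F$ is only an Anzellotti pairing (Proposition \ref{Anzellotti wedge product exists}), and extracting from the measure identity (\ref{max flow min cut}) a pointwise identity along the leaves requires the trace theorem (Proposition \ref{integration is welldefined}) to make sense of $\int_{\sigma_{\alpha,k}} F$, not a pointwise evaluation of $F$. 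Second, even granting a leafwise identity for $\mu_\alpha$-a.e.\ $k$, the definition of a calibrated lamination requires \emph{every} leaf to be calibrated, and individual leaves can have transverse measure zero; passing from ``almost every leaf'' to ``every leaf'' is a genuine step (it uses the regularized Poincar\'e lemma to write $F = \dif A$ with $A$ continuous and then a Stokes/continuity argument). You attribute both pieces of bookkeeping to the structure theory of \cite[Theorem B]{BackusCML}, but they are not there; they are exactly the content of Proposition \ref{calibration condition} of this paper.

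The paper's proof avoids your pointwise detour entirely: it integrates (\ref{max flow min cut}) over $M$ to get $\int_M T_\kappa \wedge G = \Mass(T_\kappa)$ with $G = F/\|F\|_{L^\infty}$, i.e.\ the \emph{current} $T_\kappa$ is $G$-calibrated, and then invokes Proposition \ref{calibration condition} to convert calibration of the Ruelle--Sullivan current into calibration of every leaf. If you replace your final paragraph with that integration plus a citation of Proposition \ref{calibration condition}, your argument closes.
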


We then give a characterization of dual laminations which is analogous to the equivalence (\ref{Thurston MFMC formula}) of Thurston's functionals $L$ and $K$.

\begin{theorem}\label{lams are calibrated}
Let $M$ be a closed oriented Riemannian manifold of dimension $2 \leq d \leq 7$.
Let $\rho \in H^{d - 1}(M, \RR)$ be a nonzero cohomology class, and let $\kappa$ be a measured oriented lamination.
Let $\Comass(\rho)$ be the minimal $L^\infty$ norm among representatives of $\rho$.
The following are equivalent:
\begin{enumerate}
\item $\kappa$ is dual to $\rho$.
\item With the supremum ranging over measured oriented laminations $\lambda$,
\begin{equation}\label{duality between stable and comass}
\Comass(\rho) = \sup_\lambda \frac{\langle \rho, [\lambda]\rangle}{\Mass(\lambda)} = \frac{\langle \rho, [\kappa]\rangle}{\Mass(\kappa)}.
\end{equation}
\item There exists a function of least gradient $u$ on $\tilde M$, such that $\dif u$ descends to $M$, $\dif u$ is the Ruelle-Sullivan current defining $\kappa$, and the Poincar\'e dual $\sigma$ of $[\dif u]$ satisfies
$$\langle \rho, \sigma\rangle = \Comass(\rho) \Mass(\dif u).$$
\end{enumerate}
\end{theorem}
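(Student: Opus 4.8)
The plan is to prove $(1)\Rightarrow(2)$, $(2)\Rightarrow(1)$, $(1)\Rightarrow(3)$, and $(3)\Rightarrow(2)$; together these give all three equivalences. The single engine is the \emph{equality case of the comass--mass inequality}: for a closed bounded $d-1$-form $F$ and a closed normal $d-1$-current $T$,
\begin{equation*}
\langle [F],[T]\rangle = \langle F,T\rangle \le \|F\|_{L^\infty}\,\Mass(T),
\end{equation*}
with equality exactly when, at $\|T\|$-a.e.\ point, $\|F\|$ attains $\|F\|_{L^\infty}$ and $F$ restricts to the unit density of $T$ --- i.e.\ when $F/\|F\|_{L^\infty}$ calibrates $T$. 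Applying this to the Ruelle--Sullivan current $\mathcal C_\lambda$ of an arbitrary measured oriented lamination $\lambda$ and to an arbitrary representative of $\rho$ yields the ``easy'' bound $\sup_\lambda \langle\rho,[\lambda]\rangle/\Mass(\lambda)\le\Comass(\rho)$, which I will invoke at each implication. I will also use that Theorem \ref{existence of infinity tight forms} produces a tight form realizing $\Comass(\rho)$, and that (as noted after Corollary \ref{existence of dual laminations}) if $\kappa$ is dual to $\rho$ then its leaves are calibrated, hence area-minimizing, and $\Mass(\kappa)\in(0,\infty)$.

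For $(1)\Rightarrow(2)$: pick a tight $F$ with $F/\|F\|_{L^\infty}$ calibrating the leaves of $\kappa$; tightness gives $\|F\|_{L^\infty}=\Comass(\rho)$, and integrating $F$ over $\mathcal C_\kappa$ leafwise gives $\langle\rho,[\kappa]\rangle=\Comass(\rho)\Mass(\kappa)$, which together with the easy bound gives both equalities in \eqref{duality between stable and comass}. For $(2)\Rightarrow(1)$: take a tight $F$ with $\|F\|_{L^\infty}=\Comass(\rho)$; hypothesis (2) forces $\langle F,\mathcal C_\kappa\rangle=\langle\rho,[\kappa]\rangle=\Comass(\rho)\Mass(\kappa)=\|F\|_{L^\infty}\Mass(\kappa)$, so the comass--mass inequality is saturated and $F/\|F\|_{L^\infty}$ calibrates $\kappa$, which is exactly ``$\kappa$ is dual to $\rho$.''

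For $(1)\Rightarrow(3)$: since $\kappa$ is dual to $\rho$, fix the tight $F$ calibrating it; then $\kappa$ is area-minimizing. Its Ruelle--Sullivan current $\mathcal C_\kappa$ is closed, hence the Poincar\'e-dual measure-valued $1$-form $\star\,\mathcal C_\kappa$ is closed; pulling it back to $\tilde M$ and using $H^1(\tilde M,\RR)=0$, I integrate it to $u\in BV_{\loc}(\tilde M)$ with $\dif u$ descending to $M$, and $u$ is nonconstant because $\langle\rho,[\kappa]\rangle>0$ forces $[\mathcal C_\kappa]\ne 0$. That $u$ has least gradient is a calibration argument: lift $F$ to $\tilde F$; for a compactly supported perturbation $v$ of $u$, Leibniz, Stokes, and $\dif F=0$ give $\int\dif v\wedge\tilde F=\int\dif u\wedge\tilde F$, while $\dif u\wedge\tilde F=\|F\|_{L^\infty}\star|\dif u|$ because $F$ calibrates the leaves of $\kappa$ --- this is the shape of \eqref{max flow min cut} --- and $\dif v\wedge\tilde F\le\|F\|_{L^\infty}\star|\dif v|$ by the comass bound, whence $\int\star|\dif u|\le\int\star|\dif v|$; alternatively this follows from \cite[Theorem B]{BackusCML} applied to the superlevel sets of $u$. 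Finally the Poincar\'e dual $\sigma$ of $[\dif u]$ is $[\mathcal C_\kappa]$, and $\langle\rho,\sigma\rangle=\langle F,\mathcal C_\kappa\rangle=\|F\|_{L^\infty}\Mass(\mathcal C_\kappa)=\Comass(\rho)\Mass(\dif u)$ by calibration, which is (3). For $(3)\Rightarrow(2)$: since $\dif u$ defines $\kappa$, $[\mathcal C_\kappa]=\sigma$ and $\Mass(\kappa)=\Mass(\dif u)$, so $\langle\rho,[\kappa]\rangle=\langle\rho,\sigma\rangle=\Comass(\rho)\Mass(\dif u)=\Comass(\rho)\Mass(\kappa)$, which with the easy bound is \eqref{duality between stable and comass}.

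The arithmetic above is routine; I expect the real work to lie in two regularity and measure-theoretic points. First, a tight form need not be continuous, so saturating the comass--mass inequality only gives that $F/\|F\|_{L^\infty}$ calibrates $\mathcal C_\kappa$ at $\|\mathcal C_\kappa\|$-a.e.\ point, and one must either take this as the operative meaning of ``$F$ calibrates the leaves of $\kappa$'' or upgrade it, using that for $d\le 7$ the leaves are smooth minimal hypersurfaces by interior regularity theory for area-minimizers. Second, in $(1)\Rightarrow(3)$ one must check carefully that the potential $u$ of $\mathcal C_\kappa$ genuinely has least gradient on all of $\tilde M$ and that $\dif u$ descends to $M$; the cleanest route is to package this through the correspondence between least gradient functions on $\tilde M$ and area-minimizing measured laminations on $M$ established in \cite{BackusCML}, rather than reproving it by hand.
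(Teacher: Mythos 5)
Your argument is essentially the paper's: the engine in both is saturation of the mass--comass (Anzellotti--H\"older) inequality $\int_M T_\kappa \wedge F \leq \|F\|_{L^\infty}\Mass(T_\kappa)$ for the tight representative $F$, together with writing $T_\kappa = \dif u$ and running a calibration argument to get least gradient; the paper merely organizes the implications as a single cycle $(1)\Rightarrow(2)\Rightarrow(3)\Rightarrow(1)$ (via Propositions \ref{calibrated means measured stretch} and \ref{least gradient implies dual}) rather than your four arrows, and extracts the equality case of the inequality through Lemma \ref{measurable function is 1}.

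The one point where your proposal is not yet a proof is the upgrade you flag at the end: passing from ``$F/\|F\|_{L^\infty}$ calibrates $T_\kappa$ at $\|T_\kappa\|$-a.e.\ point'' (equivalently, $\mu_\alpha$-a.e.\ leaf is calibrated) to ``every leaf of $\kappa$ is calibrated,'' which is what the paper's definition of \emph{dual} requires. Your proposed mechanism --- interior regularity of area-minimizing hypersurfaces for $d \leq 7$ --- does not do this: smoothness of an exceptional leaf $\sigma_{\alpha,k}$ gives no way to evaluate $\int_{\sigma_{\alpha,k}} F$ for a merely bounded measurable $F$, let alone to show it equals $\vol(\sigma_{\alpha,k})$. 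The paper's Proposition \ref{calibration condition} instead uses the regularized Poincar\'e lemma (Proposition \ref{Hodge theorem}) to write $F = \dif A$ with $A$ H\"older continuous near the leaf, so that $\int_{\sigma_{\alpha,k}} F = \int_{\partial\sigma_{\alpha,k}} A$ varies continuously in $k$ and the calibration identity passes from a dense set of leaves $k_j \to k$ to the limit leaf. Citing Proposition \ref{calibration condition} closes this gap; with that substitution (and using Lemma \ref{1 extremality implies least gradient} for the least-gradient step, as you suggest), your argument is correct.
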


In (\ref{duality between stable and comass}), the pairing is between cohomology and homology, and $\Mass(\lambda)$ is the total mass of the transverse measure to $\lambda$.
The duality relation (\ref{duality between stable and comass}) has already been interpreted in the string theory literature as a continuous analogue of the max flow min cut theorem \cite{Freedman_2016}, at least in the case that $\lambda$ has only one leaf.
On the other hand, one can think of $\langle \rho, [\lambda]\rangle/\Mass(\lambda)$ as the amount that $\lambda$ is ``stretched'' by a representative of $\rho$, or in other words we can think of it as analogous to Thurston's functional $K$ \cite[\S5.3]{daskalopoulos2020transverse}.
So (\ref{duality between stable and comass}) is an analogue of Thurston's formula (\ref{Thurston MFMC formula}), and Theorem \ref{lams are calibrated} can be viewed as an explanation for why the max flow min cut theorem should appear in Thurston's conjecture.

Theorem \ref{lams are calibrated} immediately implies the theorem of Bangert and Cui \cite{bangert_cui_2017} and the theorem of Daskalopoulos and Uhlenbeck on $\infty$-harmonic functions \cite{daskalopoulos2020transverse}.
In addition to clarifying the role of the $p$-Laplacian, Theorem \ref{lams are calibrated} is stronger than the Bangert--Cui theorem because we need not assume that $\rho$ contains a continuous form which minimizes its $L^\infty$ norm.
In general, it seems unlikely that a cohomology class necessarily contains such a form, though this is known to hold when $d = 2$ \cite{Evans08} or when $M$ is Ricci-flat. 

The following result is analogous to the main theorem of \cite{Mazon14}, and follows easily from Theorem \ref{lams are calibrated}:

\begin{corollary}\label{every minimizer is dual}
Let $M$ be a closed oriented Riemannian manifold of dimension $2 \leq d \leq 7$, and let $\kappa$ be a measured oriented lamination in $M$.
If $\kappa$ minimizes its mass in its homology class, then there exists a nonzero cohomology class $\rho \in H^{d - 1}(M, \RR)$ such that $\kappa$ is dual to $\rho$.
\end{corollary}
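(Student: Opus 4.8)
The plan is to deduce this from the implication (2) $\Rightarrow$ (1) of Theorem \ref{lams are calibrated}: given a mass-minimizing measured oriented lamination $\kappa$, I will exhibit a nonzero $\rho \in H^{d-1}(M,\RR)$ for which the chain of equalities (\ref{duality between stable and comass}) holds, namely the one obtained by taking $\rho$ to be a supporting functional of the stable norm ball at the homology class of $\kappa$.

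First I would dispose of the degenerate case $\Mass(\kappa) = 0$ (then $\kappa$ carries no leaves and there is nothing to prove), so assume $\Mass(\kappa) > 0$. Since $\kappa$ minimizes its mass in its homology class, the zero current is not a competitor, so $[\kappa] \neq 0$ in $H_{d-1}(M,\RR)$; moreover $\Mass(\kappa)$ equals the stable norm $\|[\kappa]\|$ of its homology class — immediately if ``minimizes mass in its homology class'' is read as minimizing among all normal currents, and otherwise via the interior regularity of codimension-one area minimizers (which is exactly where $d \le 7$ enters) together with \cite{BackusCML}, which identifies the stable-norm minimizer with a lamination.

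The core step is to choose $\rho$ by convex duality. On the finite-dimensional space $H^{d-1}(M,\RR)$ one has the inequality $|\langle\rho,\alpha\rangle| \le \Comass(\rho)\|\alpha\|$ for all $\alpha \in H_{d-1}(M,\RR)$: if $\omega$ is a closed $L^\infty$ form representing $\rho$ and $T$ a current representing $\alpha$, then $|\langle\rho,\alpha\rangle| = |\int_T \omega| \le \|\omega\|_{L^\infty}\Mass(T)$, and minimizing over $\omega$ and $T$ gives the claim; in fact $\Comass$ is precisely the dual norm of the stable norm, the nontrivial half of this being a Hahn--Banach/weak-$*$ compactness argument in the space of bounded measurable closed forms (this attainment is essentially already contained in the proof of Theorem \ref{existence of infinity tight forms}). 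Granting this, I pick $\rho$ with $\Comass(\rho) = 1$ that realizes the dual norm at $[\kappa]$, i.e. $\langle\rho,[\kappa]\rangle = \|[\kappa]\| = \Mass(\kappa)$; such a $\rho$ exists because a norm ball in a finite-dimensional space has a supporting hyperplane in every direction, and $\rho \neq 0$ since $\langle\rho,[\kappa]\rangle = \Mass(\kappa) > 0$.

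Finally I would verify condition (2) of Theorem \ref{lams are calibrated} for this $\rho$ by a sandwich argument: for any measured oriented lamination $\lambda$, the pointwise estimate above gives $\langle\rho,[\lambda]\rangle \le \Comass(\rho)\Mass(\lambda) = \Mass(\lambda)$, so $\langle\rho,[\lambda]\rangle/\Mass(\lambda) \le 1$, while this ratio equals $1$ at $\lambda = \kappa$ by the choice of $\rho$. Hence $\sup_\lambda \langle\rho,[\lambda]\rangle/\Mass(\lambda) = 1 = \Comass(\rho) = \langle\rho,[\kappa]\rangle/\Mass(\kappa)$, which is exactly (\ref{duality between stable and comass}), and Theorem \ref{lams are calibrated} then yields that $\kappa$ is dual to $\rho$. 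The only real obstacle is the duality identity that $\Comass$ is the dual of the stable norm (equivalently, that the $L^\infty$-infimum defining $\Comass(\rho)$ is attained and the duality gap with the mass-minimization problem vanishes); once that is in hand, the rest is formal.
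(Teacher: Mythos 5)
Your proposal is correct and follows essentially the same route as the paper: the paper's proof likewise invokes Hahn--Banach to produce a norming class $\rho$ with $\Comass(\rho) = 1$ and $\langle \rho, [\kappa]\rangle = \Mass([\kappa]) = \Mass(\kappa)$, observes that $\kappa$ then attains the supremum in (\ref{duality between stable and comass}), and concludes via the implication $(2) \Rightarrow (1)$ of Theorem \ref{lams are calibrated}. The duality identity you flag as the ``only real obstacle'' is already in place in the paper (Federer's stable/costable duality together with the attainment of the comass infimum in Proposition \ref{existence infinity}), so your additional verifications are elaborations rather than a different argument.
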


A companion paper \cite{BackusBest2} will make the analogy between Theorem \ref{lams are calibrated} and Thurston's approach to Teichm\"uller theory even stronger.
Thurston used his canonical maximally stretched lamination to study the duality of the tangent and cotangent bundles of Teichm\"uller space \cite[\S10]{Thurston98}.
In our situation, while there may be many measured stretch laminations associated to $\rho$, we will embed them all in a \dfn{canonical calibrated lamination} $\lambda(\rho)$.
Moreover, we will see that $\lambda(\rho)$ can be used to prove facts about the duality between the costable and stable norms, some of which were claimed but not proven by Auer and Bangert \cite{Auer01}.

The function of least gradient in Theorem \ref{existence of infinity tight forms} was extracted as a limit of $q$-harmonic conjugates as $q \to 1$.
In contrast, the function of least gradient in Theorem \ref{lams are calibrated} need not arise as the limit of $q$-harmonic conjugates, as we show by an example in \S\ref{dual lamination sec}.
On closed hyperbolic surfaces, the homology class $[\lambda]$ (which is the Poincar\'e dual of the cohomology class of the derivative of the function of least gradient) need not be unique, by a theorem of Massart \cite{Massart1997StableNO}.
It would be interesting to know if the cohomology class of the least gradient function arising from the $q$-harmonic functions in Theorem \ref{existence of infinity tight forms} is somehow privileged among all cohomology classes which are dual to a given class:

\begin{conjecture}
Let $M$ be a closed oriented Riemannian manifold of dimension $2 \leq d \leq 7$.
Let $\rho \in H^{d - 1}(M, \RR)$ be a nonzero cohomology class, and for each $1 < p < \infty$, let $F_p$ be the $p$-tight form representing $\rho$.
Let $\alpha_q \in H^1(M, \RR)$ be the cohomology class of $|F_p|^{p - 2} \star F_p$.
Then $(\alpha_q)$ has a unique limit as $q \to 1$.
\end{conjecture}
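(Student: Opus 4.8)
The plan is to recast $(\alpha_q)$ as the unique solution of a finite-dimensional concave optimization problem on $H^1(M,\RR)$ and to analyze the degeneration of this problem as $q \to 1$. Recall that $F_p$ is the unique minimizer of $F \mapsto \tfrac1p\|F\|_{L^p}^p$ over the $L^p$-closed affine subspace of closed $(d-1)$-forms representing $\rho$. Applying Fenchel--Rockafellar duality with the constraint operator $\psi \mapsto \dif\psi$ on $(d-2)$-forms, the dual problem is to maximize $G \mapsto \langle G,\rho_0\rangle_{L^2} - \tfrac1q\|G\|_{L^q}^q$ over co-closed $(d-1)$-forms $G$ (for a fixed smooth $\rho_0 \in \rho$); its unique maximizer is $G_p := |F_p|^{p-2}F_p$, and since $G_p$ is co-closed the pairing $\langle G_p,\rho_0\rangle_{L^2} = \langle G_p, F_p\rangle_{L^2} = \|F_p\|_{L^p}^p$ depends only on $[\star G_p] = [\,|F_p|^{p-2}\star F_p\,] = \alpha_q$. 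Minimizing $\tfrac1q\|G\|_{L^q}^q$ over co-closed $G$ with $[\star G] = \alpha$ gives $\tfrac1q N_q(\alpha)^q$, where $N_q(\alpha) := \inf\{\|\beta\|_{L^q} : \beta \in \alpha,\ \dif\beta = 0\}$ is the $L^q$-stable norm on $H^1(M,\RR)$. Thus, writing $\hat F_p := F_p/\|F_p\|_{L^p}$ and passing to the natural normalization $\hat\alpha_q := \alpha_q/\|F_p\|_{L^p}^{p-1} = [\,|\hat F_p|^{p-2}\star\hat F_p\,]$ (so that convergence of $\alpha_q$ itself becomes convergence of $\hat\alpha_q$ together with convergence of the scalar factor $\|F_p\|_{L^p}^{p-1}$), the class $\hat\alpha_q$ is the \emph{unique} maximizer of
\begin{equation}
\Psi_q(\alpha) := \langle\alpha,\rho\rangle - \tfrac1q N_q(\alpha)^q, \qquad \alpha \in H^1(M,\RR),
\end{equation}
uniqueness holding because $N_q^q$ is strictly convex for $1 < q < \infty$ (the minimizing representative of each class is unique by strict convexity of $\|\cdot\|_{L^q}^q$, and averaging representatives of distinct classes is strictly wasteful).

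Next I would extract subsequential limits and identify them. Testing $\Psi_q(\hat\alpha_q) \ge \Psi_q(0) = 0$ gives $N_q(\hat\alpha_q)^q \le q\langle\hat\alpha_q,\rho\rangle$, which together with $N_q(\hat\alpha_q) \le \|\,|\hat F_p|^{p-2}\star\hat F_p\,\|_{L^q} = 1$ bounds $(\hat\alpha_q)$ in the finite-dimensional space $H^1(M,\RR)$; hence convergent subsequences exist. Since $N_q \to N_1$ locally uniformly as $q \to 1$ ($H^1$ being finite-dimensional), and since $\langle\hat\alpha_q,\rho\rangle = \|F_p\|_{L^p} \to \Comass(\rho)$, any subsequential limit $\alpha_*$ satisfies $N_1(\alpha_*) = 1$ and $\langle\alpha_*,\rho\rangle = \Comass(\rho)$, i.e.\ $\alpha_*$ lies in the face of the unit ball of the $L^1$-stable norm exposed by $\rho$; equivalently, $\alpha_*$ is dual to $\rho$ in the sense of Theorem \ref{lams are calibrated}. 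So the content of the conjecture is a \emph{selection principle}: although (by polyhedrality of stable norms, e.g.\ Massart's theorem) this exposed face may be a positive-dimensional polytope, the $p$-regularization must single out one point of it.

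To locate that point I would carry out a second-order expansion of $\Psi_q$ as $q \to 1$, in the style of a De Giorgi development by $\Gamma$-convergence. Writing $q = 1 + \varepsilon$ and expanding $N_q(\alpha)^q = N_1(\alpha) + \varepsilon\,\mathcal E(\alpha) + o(\varepsilon)$, where $\mathcal E$ is an entropy-type functional of the form $\mathcal E(\alpha) = \int_M |\beta_\alpha|\log|\beta_\alpha| \dif\vol$ evaluated at the limiting $L^1$-optimal representative $\beta_\alpha$ of $\alpha$, I expect $\lim_{q \to 1}\hat\alpha_q$ to be the unique minimizer of $\mathcal E$ over the exposed face. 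Dually this predicts that $|F_p|^{p-2}\dif\vol$ converges weakly-$*$ to a distinguished transverse measure on the calibrated lamination --- the one of maximal entropy, i.e.\ the ``most spread-out'' calibration --- so that $\hat\alpha_q$ converges to the Ruelle--Sullivan class of the resulting measured lamination; this should coincide with the canonical calibrated lamination $\lambda(\rho)$ of the companion paper \cite{BackusBest2}, which is the natural candidate for the ``privileged'' class.

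The main obstacle is that $\mathcal E$ is defined through the $L^1$-optimal representative $\beta_\alpha$, which is itself \emph{not unique} --- this non-uniqueness is precisely the phenomenon behind Massart's theorem --- so the selection must be set up as an iterated limit, and one must show that the relevant inner and outer minimizations are well posed and fit together; this is the genuinely delicate conceptual point. Running alongside it is the technical problem of transferring the pairings $\langle\hat\alpha_q,\cdot\rangle$, which in concrete form involve $\int_M |F_p|^{p-2}\langle F_p,\cdot\rangle\dif\vol$, to the limit with enough precision to read off $\mathcal E$: one needs quantitative control, as $p \to \infty$, of the concentration of $|F_p|^{p-2}\dif\vol$ onto the support of the lamination and of the unit fields $F_p/|F_p|$ near it, for which the regularity theory of calibrated laminations in dimensions $2 \le d \le 7$ (Theorems \ref{existence of infinity tight forms} and \ref{lams are calibrated}, and \cite{BackusCML}) should be the right input, but making the rate effective is the technical heart of the argument. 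A cleaner route, if available, would be to prove outright that the $p$-tight forms $F_p$ converge --- which would give convergence of $\alpha_q$ immediately --- but, unlike the scalar $\infty$-Laplacian, we have no uniqueness theorem for the limiting ($\infty$-)tight object to bootstrap from, so this seems to require a new monotonicity or strict-convexity structure along the family $(F_p)_{1 < p < \infty}$.
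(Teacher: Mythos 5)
The statement you are trying to prove is stated in the paper as an open \emph{conjecture}; the paper offers no proof of it, so there is nothing to compare your argument against on the paper's side. What can be assessed is whether your proposal closes the problem, and it does not: it is an attack plan whose first half is sound and whose second half --- the part that would actually resolve the conjecture --- is heuristic and contains genuine gaps.

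The first half is correct and consistent with the paper's own machinery. Your dualization of $F_p$ to the co-closed form $G_p = |F_p|^{p-2}F_p$, the identification of $\hat\alpha_q$ as the unique maximizer of $\Psi_q$ over $H^1(M,\RR)$, the compactness of $(\hat\alpha_q)$, and the conclusion that every subsequential limit lies in the face of the $L^1$-stable-norm ball exposed by $\rho$ all check out (your normalization differs from the paper's $k_p^{p-1}$ by the bounded factor $\|F_p\|_{L^p} \to \Comass(\rho)$, which is harmless, though note the conjecture as literally stated concerns the unnormalized $\alpha_q$, which degenerates unless $\Comass(\rho)=1$). You also correctly identify, via Massart, that the whole content of the conjecture is a selection principle on a possibly positive-dimensional face. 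The gaps are in the selection step. First, the proposed entropy functional $\mathcal E(\alpha) = \int_M |\beta_\alpha|\log|\beta_\alpha|\dif\vol$ is not well-defined in the regime relevant here: the $L^1$-optimal representative of a class in the exposed face is the Ruelle--Sullivan current of a measured lamination, which is generically singular with respect to the volume measure, so the density $|\beta_\alpha|$ does not exist and the second-order expansion $N_q(\alpha)^q = N_1(\alpha) + \varepsilon\,\mathcal E(\alpha) + o(\varepsilon)$, borrowed from the absolutely continuous $L^q \to L^1$ setting, does not transfer. Second, even granting some renormalized version of $\mathcal E$, establishing the expansion requires knowing that the $L^q$-minimal representatives within a fixed class converge with a rate --- which is a selection problem of exactly the same type one level down, as you acknowledge. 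Third, uniqueness of the minimizer of $\mathcal E$ over the face is asserted but not argued: strict convexity of entropy acts on densities, whereas the face is parametrized by cohomology classes, and the map from a class to its optimal representative is precisely what is multivalued. Your plan is a reasonable research program and correctly isolates where the difficulty lives, but it does not constitute a proof, and you should present it as a conjecture-plus-strategy rather than as a resolution.
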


Let us conclude with an interesting problem which this work does not address.
The proof of Theorem \ref{lams are calibrated} is based on the fact that the level sets of a function of least gradient are area-minimizing hypersurfaces, but one can seek to formulate such a result for eigenfunctions of the $1$-Laplacian instead.
The first eigenfunction of the $1$-Laplacian is related to \dfn{Cheeger sets} -- sets $U$ which maximize the isoperimetric ratio $\vol_{d - 1}(\partial U)/\vol_d(U)$ \cite{Kawohl2003}.
Greiser proved a max flow min cut theorem for Cheeger sets, based on a dual $d - 1$-form \cite{Grieser05}, and it would be interesting to see if the dual $d - 1$-form solved an $L^\infty$ variational problem.

%%%%%%%%%%%%%%%%%%%%%%
\subsection{Outline of the paper}
In \S\ref{prelims}, we discuss preliminaries on calibrated geometry and functions of least gradient.

In \S\ref{tight forms sec}, we introduce $p$-tight forms and tight forms.
We then prove Theorem \ref{existence of infinity tight forms}, the existence of tight forms and their dual least gradient functions.
Finally, we show that the analogue of Theorem \ref{existence of infinity tight forms} for the Dirichlet problem fails.

In \S\ref{comass sec}, we discuss measurable calibrations of laminations.
Then we prove Theorem \ref{lams are calibrated}, the duality theorem for transverse measures and calibrations.
As a corollary, we deduce that calibrated laminations are Lipschitz.

% In \S\ref{AbsMin}, we show that tight forms minimize their comass on every domain meeting a cohomological criterion.
% The key new idea is a localization of the duality and renormalization developed in \S\ref{tight forms sec}.
% We use a counterexample to show that this hypothesis cannot be removed.

In \S\ref{infinityMax}, we study the Euler-Lagrange equation for a smooth tight form.
We formally derive this equation, and then prove Theorem \ref{tight are absolute minimizers}, the optimality condition for solutions of the Euler-Lagrange equation.
Then we construct solutions of this equation which are not tight.

In \S\ref{duality derivation}, we explain how to derive the PDE for a $p$-tight form from the Fenchel--Rockafellar duality theorem of convex optimization.
This derivation motivates the proof of Theorem \ref{existence of infinity tight forms}, but since it is not strictly necessary to understand its proof, we include it separately.

We also include Appendix \ref{GMT appendix}, in which we formulate the relevant parts of Anzellotti's theory of compensated compactness in a diffeomorphism-invariant way, and prove miscellaneous lemmata about Radon measures we shall need.

%%%%%%%%%%%%%%%%%%%%%%
\subsection{Acknowledgements}
I would like to thank Georgios Daskalopoulos and Karen Uhlenbeck for helpful discussions and for providing me with a draft copy of \cite{daskalopoulos2023}.
I would also like to thank Nikos Katzourakis for bringing my attention to \cite{Katzourakis12}.
In addition, this work has benefited from discussions by Brian Freidin, Anatole Gaudin, Bernd Kawohl, Taylor Klotz, and Trent Lucas, and I wouldl ike to thank them all for helping me make sense of the unusual mix of analysis and geometry studied here.

This research was supported by the National Science Foundation's Graduate Research Fellowship Program under Grant No. DGE-2040433.

%%%%%%%%%%%%%%%%%%%%%%%%%%%%%
\section{Preliminaries}\label{prelims}
\subsection{Notation}
Let $M$ be a Riemannian manifold of dimension $d$.
The operator $\star$ is the Hodge star on $M$, thus $\star 1$ is the Riemannian measure of $M$.
We denote the musical isomorphisms by $\sharp, \flat$.
To avoid confusion, we write $H^\ell$ for de Rham cohomology, but never a Sobolev space, which we instead denote $W^{\ell, p}$.

% Let $\mathscr F$ be a subpresheaf of the sheaf of distributional sections of some Riemannian vector bundle $E \to M$ and let $\mathcal X$ be a function space.
% We write $\mathcal X(\cdot, \mathscr F)$ for the presheaf of sections $u$ of $\mathscr F$ such that for every smooth unit-length local section $e$ of $E$, $\langle e, u\rangle \in \mathcal X$.
% We write $\mathcal X_\cpt$ for the space of compactly supported functions in $\mathcal X$, and $\mathcal X_\loc$ for the space of functions $u$ such that for every $\chi \in C^\infty_\cpt$, $\chi u \in \mathcal X$.

The sheaf of $\ell$-forms is denoted $\Omega^\ell$, and the sheaf of closed $\ell$-forms is denoted $\Omega^\ell_{\rm cl}$.
We assume that $\ell$-forms are $L^1_\loc$, but \emph{not} that they are continuous; hence $\dif$ must be meant in the sense of distributions.

We write $A \lesssim_\theta B$ to mean that $A \leq CB$, where $C > 0$ is a constant that only depends on $\theta$.
We write $A \ll_\theta B$ to mean that, as $B \to 0$, $A \to 0$, where the rate of convergence only depends on $\theta$.

%%%%%%%%%%%%%%%
\subsection{Integration of currents}\label{homological integration}
By an $\ell$-\dfn{current of locally finite mass} $T$ we mean a continuous linear functional on the space $C^0_\cpt(M, \Omega^{d - \ell})$ of continuous $d - \ell$-forms of compact support.
By a \dfn{current} we shall always mean a current of locally finite mass, unless explicitly stated otherwise.
We write $\int_M T \wedge \varphi$ for the dual pairing of a current $T$ and a form $\varphi$.
If $T$ is a current (of locally finite mass), then the components of $T$ are Radon measures, and conversely a Radon measure is a $d$-current.
If $\Sigma$ is a submanifold of codimension $\ell$, then we have an $\ell$-current $T_\Sigma$ defined by $\int_M T_\Sigma \wedge \varphi := \int_\Sigma \varphi$.

\begin{warning}
In our conventions, $\ell$-currents are generalizations of $\ell$-forms and submanifolds of codimension $\ell$, and they naturally define cohomology classes.
This convention is standard in algebraic geometry, but it is the opposite of the convention in geometric measure theory (where $\ell$-currents generalize submanifolds of dimension $\ell$ and naturally define homology classes).
\end{warning}

If $M$ is closed oriented, we have the Poincar\'e duality map
$$\PD: H^\ell(M, \RR) \to H_{d - \ell}(M, \RR).$$
Let $T$ be a closed $\ell$-current.
Since we can approximate $T$ by closed $\ell$-forms, $T$ defines a cohomology class $[T] \in H^\ell(M, \RR)$.
However, $T$ also is a functional on $d - \ell$-forms, so $T$ defines a homology class, which is nothing more than $\PD([T])$.

%%%%%%%%%%%%%%%%%%%%
\subsection{Calibrated geometry}
We recall calibrated geometry, which was developed by Harvey and Lawson \cite{Harvey82}.
The \dfn{comass} of a differential $k$-form $F$ is defined by 
$$\Comass(F) := \sup_{\Sigma \subset M} \frac{1}{\vol(\Sigma)} \int_\Sigma F,$$
where the supremum ranges over all oriented $k$-dimensional submanifolds $\Sigma$.
The \dfn{mass} of a $d - k$-current $T$ is
$$\Mass(T) := \sup_{\Comass(F) \leq 1} \int_M T \wedge F.$$

It is clear that $\Comass(F) \leq \|F\|_{L^\infty}$, but if $F$ is a closed $d - 1$-form, then the converse holds as well.
Indeed, we can view $F$ as the vector field $X := \star F^\sharp$, and then 
$$\Comass(F) = \sup_{\Sigma \subset M} \frac{1}{\vol(\Sigma)} \int_\Sigma X \cdot \normal_\Sigma \dif \mathcal H^{d - 1}.$$
By taking $\Sigma$ to be a small disk on which $|F|$ nearly attains its maximum, and such that $\normal_\Sigma$ nearly points in the same direction as $X$, we see that $\Comass(F) \geq \|F\|_{L^\infty} - \varepsilon$ for any $\varepsilon$.

A function $u$ has \dfn{bounded variation} if $\dif u$ is a $1$-current of finite mass.
$BV$ is the space of functions of bounded variation.
Its \dfn{total variation} is the mass $\Mass(\dif u)$, which we also write $\int_M \star |\dif u|$.

A \dfn{calibration} is a $k$-form $F$ such that $\dif F = 0$ and $\Comass(F) = 1$.
If $\Sigma$ is a $k$-dimensional submanifold, and $F$ pulls back to the Riemannian volume form of $\Sigma$, we say that $\Sigma$ is \dfn{$F$-calibrated}.
If $\Sigma$ is $F$-calibrated, then for any $k - 1$-dimensional submanifold $\Lambda$,
$$\vol(\Sigma) = \int_\Sigma F = \int_{\Sigma + \partial \Lambda} F \leq \vol(\Sigma + \partial \Lambda),$$
so that $\Sigma$ is area-minimizing.
On the other hand, if $A$ is a $k - 1$-form and $\Sigma$ is a closed $F$-calibrated submanifold, then
$$\Comass(F) = 1 = \frac{1}{\vol(\Sigma)} \int_\Sigma F = \frac{1}{\vol(\Sigma)} \int_\Sigma F + \dif A \leq \Comass(F + \dif A),$$
so $F$ minimizes its comass in its cohomology class if it calibrates a closed hypersurface.

The comass and mass induce norms on cohomology and homology.
The \dfn{stable norm} $\Mass$ on $H_k(M, \RR)$ is defined by 
$$\Mass(\theta) := \inf_{\PD([T]) = \theta} \Mass(T),$$
where $T$ ranges over $d - k$-currents.
Its dual norm is the \dfn{costable norm} $\Comass$ on $H^k(M, \RR)$, which is the infimum over $k$-forms of their comasses.
These norms were introduced by Federer \cite{Federer1974} and studied further by Gromov \cite{gromov2007metric}.
Work of Auer and Bangert \cite{Auer01}, which was later used by \cite{bangert_cui_2017}, shows that the stable norm has a particularly rich duality theory in codimension $1$.

%%%%%%%%%%%%%%%%%%%%
\subsection{Equivariant least gradient functions}\label{equivariance}
Let $M$ be a closed oriented manifold of dimension $d \geq 2$ and fundamental group $\Gamma$, and let $\alpha \in \Hom(\Gamma, \RR)$.
Since $\RR$ is abelian, we have the factorization
% https://q.uiver.app/#q=WzAsMyxbMCwwLCJcXEdhbW1hIl0sWzIsMCwiXFxtYXRoYmYgUiJdLFswLDIsIlxcR2FtbWEvW1xcR2FtbWEsIFxcR2FtbWFdIl0sWzAsMSwiXFxhbHBoYSJdLFswLDIsIiIsMix7InN0eWxlIjp7ImhlYWQiOnsibmFtZSI6ImVwaSJ9fX1dLFsyLDFdXQ==
\[\begin{tikzcd}
	\Gamma & {\mathbf R} \\
	{\Gamma/[\Gamma, \Gamma]}
	\arrow["\alpha", from=1-1, to=1-2]
	\arrow[two heads, from=1-1, to=2-1]
	\arrow[from=2-1, to=1-2]
\end{tikzcd}\]
and we have the Hurcewiz isomorphism $\Gamma/[\Gamma, \Gamma] = H_1(M, \RR)$.
Thus we obtain an element of 
$$H^1(M, \RR) = \Hom(H_1(M, \RR), \RR)$$
which we also call $\alpha$.
In particular, we obtain a harmonic $1$-form, which we also call $\alpha$.
In this situation, we will study functions $u$ on the universal cover $\tilde M$, which are \dfn{$\alpha$-equivariant} in the sense that for every deck transformation $\gamma \in \Gamma$, and $x \in \tilde M$,
$$u(\gamma x) = u(x) + \alpha(\gamma).$$
Then, $\dif u$ descends to a closed $1$-form on $M$, which we also call $\dif u$, and $[\dif u] = \alpha$.

Let $1 < q < \infty$, and $\alpha \in \Hom(\pi_1(M), \RR)$.
If a function $u: \tilde M \to \RR$ is $\alpha$-equivariant, then $\dif u$ descends to a closed $1$-form on $M$ whose cohomology class is (the abelianization of) $\alpha$.
We shall be interested in the $q$-Laplacian
\begin{subequations}\label{q harmonic}
\begin{empheq}[left=\empheqlbrace]{align}
&\dif^*(|\dif u|^{q - 2} \dif u) = 0, \\
&[\dif u] = \alpha
\end{empheq}
\end{subequations}
in the limit $q \to 1$.

\begin{definition}
Let $u \in BV(\tilde M, \RR)$ be a $\pi_1(M)$-equivariant function.
Suppose that, for every $\pi_1(M)$-invariant function $v$,
$$\Mass(\dif u) \leq \Mass(\dif u + \dif v).$$
Then $u$ has \dfn{least gradient}.
\end{definition}

We now give a more geometric interpretation of least gradient functions, using laminations.
See \cite{BackusCML} for a more careful formulation of laminations:

\begin{definition}
Let $I \subset \RR$ be an interval and $J \subset \RR^{d - 1}$ a box. Then:
\begin{enumerate}
\item A (codimension-$1$, Lipschitz) \dfn{laminar flow box} is a Lipschitz coordinate chart $\Psi: I \times J \to M$ and a compact set $K \subseteq I$, called the \dfn{local leaf space}, such that for each $k \in K$, $\Psi|_{\{k\} \times J}$ is a $C^1$ embedding, and the leaf $\Psi(\{k\} \times J)$ is a $C^1$ complete hypersurface in $\Psi(I \times J)$.
\item Two laminar flow boxes belong to the same \dfn{laminar atlas} if the transition map preserves the local leaf spaces.
\item A \dfn{lamination} $\lambda$ is a closed nonempty set $\supp \lambda$ and a maximal laminar atlas $\{(\Psi_\alpha, K_\alpha): \alpha \in A\}$ such that
$$\supp \lambda \cap \Psi_\alpha(I \times J) = \Psi_\alpha(K_\alpha \times J).$$
\end{enumerate}
\end{definition}

\begin{definition}
Let $\lambda$ be a lamination with laminar atlas $\{(\Psi_\alpha, K_\alpha): \alpha \in A\}$. Then:
\begin{enumerate}
\item If every leaf of $\lambda$ has zero mean curvature, then $\lambda$ is \dfn{minimal}. If in addition $d = 2$, then $\lambda$ is \dfn{geodesic}.
\item Let $F$ be a calibration. If every leaf of $\lambda$ is $F$-calibrated, then $\lambda$ is \dfn{$F$-calibrated}.
\item If the transition maps $\Psi_\alpha^{-1} \circ \Psi_\beta$ are orientation-preserving, then $\lambda$ is \dfn{oriented}.
\item A \dfn{transverse measure} $\mu$ to $\lambda$ consists of Radon measures $\mu_\alpha$ on each local leaf space $K_\alpha$, such that the transition maps $\Psi_\alpha^{-1} \circ \Psi_\beta$ send $\mu_\beta$ to $\mu_\alpha$, and $\supp \mu_\alpha = K_\alpha$.
The pair $(\lambda, \mu)$ is a \dfn{measured lamination}.
\item Suppose that $\lambda$ is measured oriented, and let $\{\chi_\alpha: \alpha \in A\}$ be a partition of unity subordinate to $\{\Psi_\alpha(I \times J): \alpha \in A\}$. The \dfn{Ruelle-Sullivan current} $T_\lambda$ acts on $\varphi \in C^0_\cpt(M, \Omega^{d - 1})$ by 
$$\int_M T_\lambda \wedge \varphi := \sum_{\alpha \in A} \int_{K_\alpha} \left[\int_{\{k\} \times J} (\Psi_\alpha^{-1})^* (\chi_\alpha \varphi)\right] \dif \mu_\alpha(k).$$
\item Suppose that $\lambda$ is measured oriented. The \dfn{mass} $\Mass(\lambda)$ is $\Mass(T_\lambda)$, and the \dfn{homology class} $[\lambda] \in H_{d - 1}(M, \RR)$ is $\PD([T_\lambda])$.
\end{enumerate}
\end{definition}

\begin{theorem}\label{1 harmonic is MOML}
Suppose that $d \leq 7$ and $u$ is a $\pi_1(M)$-equivariant function on $\tilde M$.
The following are equivalent:
\begin{enumerate}
\item $u$ has least gradient.
\item There is a measured oriented minimal lamination $\lambda_u$, which minimizes its mass in its homology class, whose leaves are all of the form $\partial \{u > y\}$ or $\partial \{u < y\}$, $y \in \RR$, and whose Ruelle-Sullivan current is $\dif u$.
\end{enumerate}
\end{theorem}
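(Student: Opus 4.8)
The plan is to deduce both implications from the structure theory of least gradient functions in \cite{BackusCML}, supplemented by the mass--stable norm duality recalled in \S\ref{homological integration} and \S\ref{equivariance}. The implication (2) $\Rightarrow$ (1) is soft; the implication (1) $\Rightarrow$ (2) is the substantive one and is essentially \cite[Theorem B]{BackusCML}, so I would mostly indicate how to reduce to it.

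First I would dispose of (2) $\Rightarrow$ (1). Suppose such a lamination $\lambda_u$ exists. Since $T_{\lambda_u} = \dif u$ is a $1$-current of finite mass and $u$ is equivariant over a closed manifold, $u \in BV(\tilde M)$, so ``least gradient'' makes sense. Let $v$ be any $\pi_1(M)$-invariant function; then $v$ descends to a function on $M$, so $\dif v$ is exact on $M$, and $\dif u + \dif v$ is a $1$-current on $M$ with $[\dif u + \dif v] = [\dif u]$, hence $\PD([\dif u + \dif v]) = \PD([\dif u]) = [\lambda_u]$. Because $\lambda_u$ minimizes its mass in the homology class $[\lambda_u]$, the mass $\Mass(T_{\lambda_u})$ equals the stable norm of $[\lambda_u]$, so
$$\Mass(\dif u + \dif v) \geq \Mass([\lambda_u]) = \Mass(T_{\lambda_u}) = \Mass(\dif u),$$
and $u$ has least gradient.

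Now for (1) $\Rightarrow$ (2). The first step is the classical fact (Bombieri--De Giorgi--Giusti, Sternberg--Williams--Ziemer) that for every $y$ the superlevel set $\{u > y\}$ and the sublevel set $\{u < y\}$ are area-minimizing Caccioppoli sets: a competitor agreeing with $\{u > y\}$ outside a compact set can be fed back into $u$, and the coarea formula turns the minimality of $\Mass(\dif u)$ into the area-minimality of $\partial\{u>y\}$; equivariance is harmless because the deck group merely permutes the level sets by translating $y$ and the comparison is local. Since $d \leq 7$, De Giorgi--Federer regularity upgrades each such boundary to a smooth embedded minimal hypersurface. The second step is to assemble $\supp\lambda_u := \overline{\bigcup_{y \in \RR}\partial\{u > y\}}$ into a lamination: distinct level sets are disjoint by nestedness, and interior curvature estimates for stable minimal hypersurfaces (Schoen--Simon--Yau, valid for $d \leq 7$) give local smooth compactness, so limits of leaves are leaves and the leaves organize into a Lipschitz laminar atlas with compact local leaf spaces. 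One equips $\lambda_u$ with the orientation $\nabla u / |\nabla u|$ and the transverse measure $\mu$ which is the pushforward of $|\dif u|$ under $u$ (equivalently, by the coarea formula, $\mu$ restricted to a flow box is Lebesgue measure on the set of heights whose level set meets the box); then the coarea formula gives $T_{\lambda_u} = \dif u$ and $\Mass(\lambda_u) = \Mass(\dif u)$. Finally, $\lambda_u$ minimizes its mass in its homology class: any closed finite-mass current $S$ with $[S] = [\dif u]$ differs from $\dif u$ by $\dif\psi$ for some $\psi \in BV(M)$, whose lift is $\pi_1(M)$-invariant, so the least gradient property gives $\Mass(S) = \Mass(\dif u + \dif\psi) \geq \Mass(\dif u) = \Mass(T_{\lambda_u})$.

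The main obstacle is the assembly step in (1) $\Rightarrow$ (2): promoting the a priori only measurable family of minimal level-set boundaries to a genuine lamination with Lipschitz --- not merely measurable --- transverse structure, verifying that $\supp\lambda_u$ is exactly the union of these hypersurfaces and that the local leaf spaces are compact. This is where the curvature estimates and a careful local analysis of the accumulation of leaves are needed; it is precisely the content of \cite[Theorem B]{BackusCML}, and I would invoke it rather than reproduce it.
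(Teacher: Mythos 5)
Your proposal is correct and matches the paper's proof in its essential structure: the substantive implication (1) $\Rightarrow$ (2) is, in both cases, a reduction to \cite[Theorem B]{BackusCML} (plus the observation that equivariance lets the lamination descend and that least gradient forces homological mass-minimization), and your sketch of what goes into that theorem is accurate. The only difference is cosmetic: for (2) $\Rightarrow$ (1) you argue directly from the hypothesis that $\lambda_u$ is homologically mass-minimizing, comparing $\Mass(\dif u + \dif v)$ with the stable norm, whereas the paper re-invokes \cite[Theorem B]{BackusCML} to get local least gradient first; your version is if anything slightly more self-contained.
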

\begin{proof}
If $u$ has least gradient, then \cite[Theorem B]{BackusCML} implies that there is a measured oriented minimal lamination $\tilde \lambda_u$ on $\tilde M$ whose leaves are level sets of $u$, and whose Ruelle-Sullivan current is $\dif u$.
Since $u$ is equivariant, $\tilde \lambda_u$ descends to a lamination $\lambda_u$ on $M$ such that $\Mass(\lambda_u) = \Mass(\dif u)$.
Since $u$ has least gradient, $\lambda_u$ is mass-minimizing.

Conversely, if such a lamination exists, \cite[Theorem B]{BackusCML} implies that $u$ locally has least gradient and $\Mass(\dif u) = \Mass(\lambda_u)$, so $u$ has least gradient.
\end{proof}

%%%%%%%%%%%%%%%%%%%%%%%%%%%%%
\section{Tight forms and functions of least gradient}\label{tight forms sec}
In this section we shall prove Theorem \ref{existence of infinity tight forms}, by studying properties of $p$-tight forms.
Unless noted otherwise, $M$ shall denote a closed oriented Riemannian manifold, $\tilde M \to M$ shall be the universal covering, $M_{\rm fun} \subseteq \tilde M$ shall be a fundamental domain of $M$, and $\Gamma := \pi_1(M)$.

%%%%%%%%%%%%%%%%%%%%%%%%%%%
\subsection{Equivariant \texorpdfstring{$q$-harmonic}{q-harmonic} functions}
Let $\alpha \in \Hom(\Gamma, \RR)$ and $1 < q < \infty$.
Abelianizing $\alpha$, we think of $\alpha$ as a cohomology class in $H^1(M, \RR)$.
We are interested in $\alpha$-equivariant $q$-harmonic functions $u$ on $\tilde M$, or equivalently closed $1$-forms $\dif u$ on $M$ which minimize their $L^q$ norm in the cohomology class $\alpha$.

We first estimate the energy of an $\alpha$-equivariant $q$-harmonic function $u$ in terms of the stable norm $\Mass(\PD(\alpha))$ and the geometry of $M$.
If $M$ is a hyperbolic surface, this estimate is due to Massart \cite[\S4.2]{Massart96}.
We introduce the \dfn{maximal intersection number}
$$i_M := \sup_{\substack{\xi \in C^\infty(M, \Omega^1) \\ \eta \in C^\infty(M, \Omega^{d - 1})}} \frac{1}{\|\xi\|_{L^1} \|\eta\|_{L^1}} \int_M \xi \wedge \eta$$
which is a positive, finite constant that only depends on the Riemannian manifold $M$.

\begin{lemma}
Let $1/p + 1/q = 1$, and let $u_q: \tilde M \to \RR$ be an $\alpha$-equivariant $q$-harmonic function.
Then
\begin{equation}\label{q Laplacian Sobolev regularity estimate}
\vol(M)^{-1/p} \Mass(\PD(\alpha)) \leq \|\dif u_q\|_{L^q} \leq \vol(M)^{1/q} i_M \Mass(\PD(\alpha))
\end{equation}
\end{lemma}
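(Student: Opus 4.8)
The plan is to get the lower bound from H\"older's inequality and the definition of the stable norm, and the upper bound from the Euler--Lagrange equation, which produces a closed $(d-1)$-form dual to $\dif u_q$ that we test against an almost-optimal closed $1$-form representing $\alpha$. If $\alpha = 0$ then $\dif u_q \equiv 0$ and there is nothing to prove, so assume $\alpha \neq 0$.

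For the lower bound, note that $\dif u_q \in L^q(M,\Omega^1)$ is a closed $1$-form representing $\alpha$, hence a closed $1$-current with $\PD([\dif u_q]) = \PD(\alpha)$, so the definition of the stable norm gives
\[
\Mass(\PD(\alpha)) \leq \Mass(\dif u_q) = \|\dif u_q\|_{L^1} \leq \vol(M)^{1/p}\|\dif u_q\|_{L^q},
\]
the last step being H\"older's inequality with conjugate exponents $q$ and $p$; rearranging gives the left inequality. For the upper bound, set $\beta := |\dif u_q|^{q-2}\dif u_q \in L^p(M,\Omega^1)$ and $F := \star\beta \in L^p(M,\Omega^{d-1})$. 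The $q$-harmonic equation $\dif^*\beta = 0$ says precisely that $\dif F = 0$, so $F$ is a closed current and defines a class $[F] \in H^{d-1}(M,\RR)$. Pointwise $\dif u_q \wedge F = \langle\dif u_q,\beta\rangle\star 1 = |\dif u_q|^q\star 1$, so $\int_M \dif u_q\wedge F = \|\dif u_q\|_{L^q}^q$. If $\xi$ is any smooth closed $1$-form with $[\xi] = \alpha$, then $\dif u_q - \xi = \dif g$ for some $g \in W^{1,q}(M)$, and closedness of $F$ gives $\int_M \dif g\wedge F = 0$, so in fact $\int_M\dif u_q\wedge F = \int_M \xi\wedge F$. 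Applying the definition of $i_M$ to the closed forms $\xi$ and $F$ (reducing to the smooth case by mollifying $F$ in $L^1$ while preserving its cohomology class), and then H\"older's inequality to $\|F\|_{L^1} = \int_M |\dif u_q|^{q-1}$,
\[
\|\dif u_q\|_{L^q}^q = \int_M\xi\wedge F \leq i_M\|\xi\|_{L^1}\|F\|_{L^1} \leq i_M\|\xi\|_{L^1}\vol(M)^{1/q}\|\dif u_q\|_{L^q}^{q-1}.
\]
Dividing by $\|\dif u_q\|_{L^q}^{q-1} > 0$ and taking the infimum over all such $\xi$ yields $\|\dif u_q\|_{L^q} \leq i_M\vol(M)^{1/q}\inf_\xi\|\xi\|_{L^1}$.

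The step I expect to carry the real content is the identification of $\inf_\xi\|\xi\|_{L^1}$, over smooth closed $1$-forms in the class $\alpha$, with $\Mass(\PD(\alpha))$. The inequality $\Mass(\PD(\alpha)) \leq \inf_\xi\|\xi\|_{L^1}$ is immediate, since a smooth closed $1$-form is in particular a closed $1$-current of the right class whose mass equals its $L^1$ norm. For the reverse inequality one must approximate a nearly mass-minimal closed $1$-current representing $\PD(\alpha)$ by a smooth closed $1$-form in the same de Rham cohomology class, increasing the mass by at most $\varepsilon$; this is a standard mollification argument on the closed manifold $M$ (the de Rham smoothing operators are chain homotopic to the identity, so they fix cohomology, and may be arranged to perturb the mass arbitrarily little), and is part of the classical theory of the stable norm. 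The same mollification, run in $L^1$, is what lets us apply the definition of $i_M$ to the merely $L^1$ form $F$ above. The remaining ingredients --- H\"older, the pointwise identity $\dif u_q\wedge F = |\dif u_q|^q\star 1$, and the integration by parts against the closed current $F$ --- are routine.
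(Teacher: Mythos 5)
Your proof is correct, but the upper bound is obtained by a genuinely different route from the paper's. The paper compares $u_q$ with a competitor: it takes an $\alpha$-equivariant function $u_\infty$ minimizing its Lipschitz constant, uses the $L^q$-minimality of $\dif u_q$ to get $\|\dif u_q\|_{L^q} \leq \vol(M)^{1/q}\|\dif u_\infty\|_{L^\infty}$, and then bounds $\|\dif u_\infty\|_{L^\infty}$ by $i_M\,\Mass(\PD(\alpha))$ via $L^1$--$L^\infty$ duality and Stokes. You instead test the Euler--Lagrange equation directly: the closed $(d-1)$-form $F = \star(|\dif u_q|^{q-2}\dif u_q)$ you introduce is precisely the $p$-tight conjugate form that the paper constructs in \S3, and the chain $\|\dif u_q\|_{L^q}^q = \int_M \dif u_q \wedge F = \int_M \xi \wedge F \leq i_M\|\xi\|_{L^1}\vol(M)^{1/q}\|\dif u_q\|_{L^q}^{q-1}$ is a clean computation with the same constant. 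Your approach is more self-contained (it does not invoke the existence of a Lipschitz-minimizing equivariant function, which the paper imports from Daskalopoulos--Uhlenbeck) and it foreshadows the $q$-harmonic/$p$-tight duality central to the rest of the paper; the paper's approach gets the $L^\infty$ bound on $\dif u_\infty$ as a byproduct. The one step you flag as carrying real content --- identifying $\inf_\xi\|\xi\|_{L^1}$ over smooth closed representatives of $\alpha$ with $\Mass(\PD(\alpha))$ via de Rham regularization --- is indeed needed, but the paper's own proof relies on the same fact when it produces $v_\varepsilon$ with $\|\dif v_\varepsilon\|_{L^1}\leq\Mass(\PD(\alpha))+\varepsilon$, so you are on equal footing there. (Both proofs also tacitly apply $i_M$ to forms that are only $L^1$ rather than smooth; your mollification remark handles this.)
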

\begin{proof}
Let $u_\infty$ be an $\alpha$-equivariant function which minimizes its Lipschitz constant $\|\dif u_\infty\|_{L^\infty}$ among all $\alpha$-equivariant functions. (For example, we can take $u_\infty$ to be the equivariant $\infty$-harmonic function constructed by Daskalopoulos and Uhlenbeck \cite[\S2]{daskalopoulos2020transverse}.)
Then, since $\dif u_q$ minimizes its $L^q$ norm, we obtain from H\"older's inequality
$$\|\dif u_q\|_{L^q} \leq \|\dif u_\infty\|_{L^q} \leq \vol(M)^{1/q} \|\dif u_\infty\|_{L^\infty}.$$
By the duality of $L^1$ and $L^\infty$,
$$\|\dif u_\infty\|_{L^\infty} = \sup_{\eta \in C^\infty(M, \Omega^{d - 1})} \frac{1}{\|\eta\|_{L^1}} \int_M \dif u_\infty \wedge \eta.$$
If we now let $v_\varepsilon$ be $\alpha$-equivariant with $\|\dif v_\varepsilon\|_{L^1} \leq \Mass(\PD(\alpha)) + \varepsilon$, then $u - v_\varepsilon$ descends to a function on $M$, so by Stokes' theorem,
\begin{align*}
\sup_{\eta \in C^\infty(M, \Omega^{d - 1})} \frac{1}{\|\eta\|_{L^1}} \int_M \dif u_\infty \wedge \eta 
&= \sup_{\eta \in C^\infty(M, \Omega^{d - 1})} \frac{1}{\|\eta\|_{L^1}} \int_M \dif v_\varepsilon \wedge \eta\\
&\leq i_M \|\dif v_\varepsilon\|_{L^1} \\
&\leq i_M(\Mass(\PD(\alpha)) + \varepsilon).
\end{align*}
Taking $\varepsilon \to 0$, we deduce one direction of (\ref{q Laplacian Sobolev regularity estimate}).
In the other direction, we estimate using H\"older's inequality
\begin{align*}
\Mass(\PD(\alpha)) &\leq \|\dif u_q\|_{L^1} \leq \|\dif u_q\|_{L^q} \vol(M)^{1/p}. \qedhere 
\end{align*}
\end{proof}

Since we are interested in the limit $q \to 1$, we must show that convergence of equivariant functions, even in a very weak function space, implies convergence of the representations.

\begin{lemma}\label{L1 convergence preserves pi1}
For each $1 < q \leq 2$, let $\alpha_q$ be a representation, let $u_q$ be an $\alpha_q$-equivariant function on $\tilde M$, and let $u \in L^1_\loc(\tilde M, \RR)$.
Suppose that $u_q \to u$ in $L^1_\loc$.
Then:
\begin{enumerate}
\item $\alpha_q \to \alpha$ for some representation $\alpha$.
\item $u$ is $\alpha$-equivariant.
\item If $\dif u_q \weakto^* \dif u$ and $\dif u_q \in L^q$, then
\begin{equation}\label{q to 1 Holder}
\Mass(\dif u) \leq \liminf_{q \to 1} \frac{1}{q} \int_M \star |\dif u_q|^q.
\end{equation}
\end{enumerate}
\end{lemma}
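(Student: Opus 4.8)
The plan is to prove the three assertions in order; (1) and (2) are soft statements about $L^1_\loc$ convergence of equivariant functions, and (3) combines lower semicontinuity of mass with Young's inequality.

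For (1), fix a deck transformation $\gamma \in \Gamma$. Equivariance gives $u_q \circ \gamma - u_q = \alpha_q(\gamma)$ almost everywhere, a constant function. Since $\gamma$ is an isometry of $\tilde M$, precomposition with $\gamma$ is an isometry of $L^1_\loc(\tilde M)$, so $u_q \circ \gamma \to u \circ \gamma$ in $L^1_\loc$; hence the constants $\alpha_q(\gamma)$ converge in $L^1_\loc(\tilde M)$ to $u \circ \gamma - u$. A sequence of real constants that converges in $L^1$ of a fixed set of positive finite measure is Cauchy in $\RR$, so $\alpha_q(\gamma) \to \alpha(\gamma)$ for some real number $\alpha(\gamma)$, and $u \circ \gamma - u = \alpha(\gamma)$ a.e. Letting $q \to 1$ in $\alpha_q(\gamma\delta) = \alpha_q(\gamma) + \alpha_q(\delta)$ shows $\alpha \in \Hom(\Gamma, \RR)$; since $M$ is compact, $\Gamma$ is finitely generated and $\Hom(\Gamma, \RR) \cong H^1(M, \RR)$ is finite-dimensional, so pointwise convergence on a generating set upgrades to $\alpha_q \to \alpha$ as representations. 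Assertion (2) is then exactly the collection of identities $u \circ \gamma - u = \alpha(\gamma)$ a.e., one for each $\gamma \in \Gamma$, extracted above; in particular $\dif u$ descends to a closed $1$-form on $M$ with $[\dif u] = \alpha$.

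For (3), note that $\dif u_q$ and $\dif u$ descend to closed $1$-currents on the compact manifold $M$, with $\dif u_q \in L^q(M) \subseteq L^1(M)$, and the hypothesized weak-$*$ convergence passes to these descended currents. Mass is a supremum of functionals continuous for the weak-$*$ topology on currents, hence lower semicontinuous, so $\Mass(\dif u) \leq \liminf_{q \to 1} \Mass(\dif u_q) = \liminf_{q \to 1} \int_M \star |\dif u_q|$. Applying Young's inequality $a \leq \tfrac1q a^q + \tfrac1p$ (for $a \geq 0$, $1/p + 1/q = 1$) pointwise to $a = |\dif u_q|$ and integrating gives $\int_M \star |\dif u_q| \leq \tfrac1q \int_M \star |\dif u_q|^q + \tfrac1p \vol(M)$. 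Since $\tfrac1p \vol(M) \to 0$ as $q \to 1$, taking $\liminf$ yields $\liminf_{q \to 1} \int_M \star |\dif u_q| \leq \liminf_{q \to 1} \tfrac1q \int_M \star |\dif u_q|^q$, and chaining the two inequalities gives (\ref{q to 1 Holder}).

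The only delicate point is in (3): one must apply lower semicontinuity on the compact manifold $M$, where it is standard, rather than on $\tilde M$, and check that the Young error term is exactly $\tfrac1p\vol(M)$, so that no normalization of $u_q$ beyond the stated hypotheses — and in particular no $q$-harmonicity — is needed for it to vanish as $p \to \infty$. Parts (1) and (2) present no obstacle beyond routine bookkeeping with $L^1_\loc$ convergence and the finite-dimensionality of $\Hom(\Gamma,\RR)$.
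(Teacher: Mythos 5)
Your proof is correct. Part (3) follows essentially the paper's route: lower semicontinuity of mass under weak-$*$ convergence on the compact quotient, followed by an elementary comparison of $\int_M \star|\dif u_q|$ with $\tfrac1q\int_M \star|\dif u_q|^q$; you use Young's inequality where the paper uses H\"older's inequality $\|\dif u_q\|_{L^1}\leq \vol(M)^{1/p}\|\dif u_q\|_{L^q}$, and these are interchangeable here. (If anything, your version is slightly cleaner, since you only invoke lower semicontinuity of mass, whereas the paper asserts $\Mass(\dif u)=\lim_q \Mass(\dif u_q)$, which is more than is needed or justified.)

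Parts (1) and (2) are where you genuinely diverge. The paper argues by contradiction: it uses local compactness of $H^1(M,\RR)$ to say that failure of subsequential convergence would force $\langle\alpha_q,\gamma\rangle\to\infty$ for some $\gamma$, and then derives a contradiction with the $L^1(\gamma_* M_{\rm fun})$ bounds implied by $L^1_\loc$ convergence; equivariance of $u$ and uniqueness of $\alpha$ are then checked afterwards to promote subsequential convergence to convergence of the full family. You instead observe that $u_q\circ\gamma-u_q$ is the constant $\alpha_q(\gamma)$ and converges in $L^1$ of a set of positive finite measure, which forces $\alpha_q(\gamma)$ to be Cauchy in $\RR$ and simultaneously identifies the limit as $u\circ\gamma-u$. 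This is more direct: it yields (1) and (2) in one stroke, avoids the contradiction argument entirely, and makes the uniqueness of $\alpha$ automatic rather than something to be verified. The only ingredient you add is finite generation of $\Gamma$ plus finite-dimensionality of $\Hom(\Gamma,\RR)$ to turn pointwise convergence into convergence of representations, which is harmless. Both arguments are sound; yours is the more economical of the two.
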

\begin{proof}
For each $\gamma \in \Gamma$, let
$$U_\gamma := M_{\rm fun} \cup \gamma_* (M_{\rm fun}).$$
We claim that $(\alpha_q)$ has a convergent subsequence.
To see this, we first recall that $M$ has finite Betti numbers, so $H^1(M, \RR)$ is locally compact.
Therefore, if no convergent subsequence exists, there exists a $\gamma \in \pi_1(M)$ and a subsequence along which $\langle \alpha_q, \gamma\rangle \to \infty$.
Moreover, since $u_q \to u$ in $L^1_\loc$, $\|u_q\|_{L^1(M_{\rm fun})} \leq 2\|u\|_{L^1(M_{\rm fun})}$ if $q - 1$ is small enough.
But then 
$$\|u_q\|_{L^1(\gamma_* M_{\rm fun})} = \|\gamma^* u_q\|_{L^1(M_{\rm fun})} \geq \langle \alpha_q, \gamma\rangle - \|u_q\|_{L^1(M_{\rm fun})} \geq \langle \alpha_q, \gamma\rangle - 2\|u\|_{L^1(M_{\rm fun})}$$
and taking $q \to 1$ we conclude that $(u_q)$ is not compact in $L^1(\gamma_* M_{\rm fun})$, contradicting the convergence in $L^1_\loc(\tilde M)$.
So $\alpha_q \to \alpha$ for some $\alpha \in H^1(M, \RR)$ along a subsequence.

For any $q > 1$,
\begin{align*}
\dashint_{M_{\rm fun}} \star |\gamma^* u - u - \langle \alpha, \gamma\rangle| 
&\leq \dashint_{M_{\rm fun}} \star (|\gamma^* u_q - u_q - \langle \alpha_q, \gamma\rangle| + |\gamma^* u_q - u_q| + |\gamma^* u - u|) \\
&\qquad + |\langle \alpha_q - \alpha, \gamma\rangle|.
\end{align*}
Taking $q \to 1$, we conclude that $\|\gamma^* u - u - \langle \alpha, \gamma\rangle\|_{L^1} = 0$, hence $u$ is $\alpha$-equivariant.
Thus $\alpha$ is uniquely defined and $\alpha_q \to \alpha$ along the entire subsequence.

Finally we prove (\ref{q to 1 Holder}).
Suppose that $\dif u_q \weakto^* \dif u$ as measures, and $\dif u_q$ in $L^q$.
Then
$$\|\dif u_q\|_{L^1} = \Mass(\dif u_q).$$
So we may use Proposition \ref{portmanteau} and H\"older's inequality to estimate (where $1/p + 1/q = 1$)
\begin{align*}
\Mass(\dif u) &= \lim_{q \to 1} \Mass(\dif u_q) \leq \lim_{q \to 1} \vol(M)^{\frac{1}{p}} \|\dif u_q\|_{L^q} = \lim_{q \to 1} \frac{1}{q} \int_M \star |\dif u_q|^q. \qedhere
\end{align*}
\end{proof}

%%%%%%%%%%%%%%%%
\subsection{Duality and \texorpdfstring{$p$-tight forms}{p-tight forms}}
Let $1 < p < \infty$ and $1/p + 1/q = 1$.

\begin{definition}
A \dfn{$p$-tight form} is a solution of the PDE 
\begin{subequations}\label{p tight}
\begin{empheq}[left=\empheqlbrace]{align}
&\dif F = 0, \\ 
&\dif^*(|F|^{p - 2} F) = 0. \label{p tight 2}
\end{empheq}
\end{subequations}
\end{definition} 

In \S\ref{duality derivation}, we motivate study of the PDE (\ref{p tight}) by deriving it by applying the Fenchel--Rockafellar duality theorem to the $q$-Laplacian.

\begin{proposition}
There is a unique $p$-tight form in each cohomology class in $H^{d - 1}(M, \RR)$.
Moreover, $p$-tight forms are minimizers of the strictly convex functional
$$J_p(F) := \frac{1}{p} \int_M \star |F|^p$$
among all forms cohomologous to them.
\end{proposition}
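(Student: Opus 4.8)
The plan is to prove existence by the direct method and uniqueness by strict convexity, exploiting that the constraint "cohomologous to a fixed form" is affine. Fix a cohomology class $\rho \in H^{d-1}(M, \RR)$ and a smooth representative $F_0$. Every $L^p$ representative of $\rho$ may be written $F = F_0 + \dif A$ for some $d-2$-form $A$, so minimizing $J_p$ over the affine space $F_0 + \dif\Omega^{d-2}$ is a convex problem. First I would verify that $J_p$ is finite somewhere on this space (take $F_0$ smooth, so $J_p(F_0) < \infty$), that $J_p(F) \to \infty$ as $\|F\|_{L^p} \to \infty$ (coercivity is immediate since $J_p(F) = \frac1p \|F\|_{L^p}^p$), and that $J_p$ is strictly convex on $L^p$ — this is the strict convexity of $t \mapsto |t|^p$ for $p > 1$ together with the fact that $|\cdot|$ is a strictly convex norm on the fibers of $\Lambda^{d-1}T^*M$ (or, more cheaply, note that if $J_p(\tfrac12 F + \tfrac12 G) = \tfrac12 J_p(F) + \tfrac12 J_p(G)$ then $|F| = |G|$ a.e.\ and $F, G$ point the same way, forcing $F = G$ when they are cohomologous, since then $F - G = \dif A$ is exact and pointwise parallel to $F$; I will need to be slightly careful here and may instead argue strict convexity directly from the inequality $|\tfrac12 F + \tfrac12 G|^p < \tfrac12|F|^p + \tfrac12|G|^p$ unless $F = G$ pointwise).

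Next I would run the direct method. Take a minimizing sequence $F_n = F_0 + \dif A_n$; by coercivity $\|F_n\|_{L^p}$ is bounded, so after passing to a subsequence $F_n \weakto F$ weakly in $L^p$ (using $1 < p < \infty$, so $L^p$ is reflexive). The class $\Omega^{d-1}_{\rm cl} \cap L^p$ that are cohomologous to $\rho$ is a closed affine subspace of $L^p$ — closedness follows because $\dif$ is a closed operator and the cohomology class of an $L^p$ closed form is well-defined and continuous under $L^p$ convergence (this uses that $M$ is closed, so Hodge theory gives $[F] = [F_0]$ iff the harmonic projection agrees, and harmonic projection is $L^p$-continuous) — hence it is weakly closed, so the weak limit $F$ still represents $\rho$. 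Weak lower semicontinuity of $J_p$ (it is convex and strongly continuous, hence weakly l.s.c.) gives $J_p(F) \leq \liminf_n J_p(F_n) = \inf$, so $F$ is a minimizer. Uniqueness is then immediate: if $F, G$ both minimize, strict convexity applied to $\tfrac12(F+G)$ (also cohomologous to $\rho$) forces $F = G$.

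Finally I would identify the minimizer with the $p$-tight PDE. A minimizer $F$ of $J_p$ over $F + \dif\Omega^{d-2}$ satisfies the Euler--Lagrange equation: for every smooth $d-2$-form $A$, $\frac{d}{dt}\big|_{t=0} J_p(F + t\,\dif A) = 0$, i.e.\ $\int_M \langle |F|^{p-2} F, \dif A\rangle \star 1 = 0$, which is precisely the weak form of $\dif^*(|F|^{p-2} F) = 0$; combined with $\dif F = 0$ (automatic since $F$ is closed), this is exactly (\ref{p tight}). Conversely, any $p$-tight form is a critical point of the (strictly convex) functional $J_p$ on the affine space, hence the global minimizer, hence equals the one just constructed. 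I expect the main obstacle to be the bookkeeping around the affine constraint in the $L^p$ (rather than $L^2$/smooth) category: specifically, making precise that an $L^p$ closed form has a well-defined de Rham class, that "cohomologous to $\rho$" is $L^p$-weakly closed, and that the admissible variations $\dif A$ with $A$ smooth are dense enough in the tangent space to the constraint to yield the full Euler--Lagrange equation — all of which I would handle via Hodge decomposition on the closed manifold $M$ together with elliptic regularity, or by citing the standard $L^p$-Hodge theory.
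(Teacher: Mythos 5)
Your proposal is correct and follows essentially the same route as the paper: strict convexity and coercivity of $J_p$ on the affine subspace of $L^p$ closed forms in a fixed class give existence and uniqueness (the paper simply cites Ekeland--Temam where you run the direct method by hand), and the Euler--Lagrange computation against variations $\dif B$ identifies minimizers with $p$-tight forms. The extra care you take with weak closedness of the constraint set and density of smooth variations is reasonable but not something the paper itself elaborates.
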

\begin{proof}
Strict convexity of $J_p$ on closed $L^p$ $d - 1$-forms is straightforward; since each cohomology class is an affine subspace of $L^p(M, \Omega^{d - 1})$, and hence is convex, the strict convexity on each class follows.
Since $J_p(F) \to \infty$ as $\|F\|_{L^p} \to \infty$, $J_p$ is coercive on $L^p(M, \Omega^{d - 1})$.
Therefore we have existence and uniqueness \cite[Chapter II]{Ekeland99}.
To compute the Euler-Lagrange equation for $J_p$, let $B$ be a $d-2$-form (so $F + t \dif B$ is cohomologous to $F$), so that for a minimizer $F$ of $J_p$,
$$\frac{\dif}{\dif t} J_p(F + t \dif B) = \frac{1}{p} \int_M \star \frac{\partial}{\partial t} |F + t \dif B|^p = \int_M \star |F + t \dif B|^{p - 2} \langle F + t \dif B, \dif B\rangle.$$
Setting $t = 0$, we obtain 
$$0 = \int_M \star |F|^{p - 2} \langle F, \dif B\rangle = \int_M \star \langle \dif^*(|F|^{p - 2} F), B\rangle.$$
Thus the Euler-Lagrange equation for $J_p$ is (\ref{p tight 2}).
\end{proof}

In \S\ref{duality derivation}, we show that every $q$-harmonic function defines a $p$-tight form by the mapping (\ref{dual solution appendix}).
The mapping in the below definition, which sends a $p$-tight form to a $q$-harmonic function, is the inverse of (\ref{dual solution appendix}).

\begin{definition}
Let $F$ be a $p$-tight form on $M$, let
\begin{equation}
\dif u := (-1)^d |F|^{p - 2} \star F, \label{inverse extremality}
\end{equation}
and let $u$ be the primitive of $\dif u$ on the universal cover $\tilde M$, which is normalized to have zero mean on a fundamental domain $M_{\rm fun}$.
Then $u$ is called the \dfn{$q$-harmonic conjugate} of the $p$-tight form $F$.
\end{definition}

Let $u$ be the $q$-harmonic conjugate of a $p$-tight form $F$.
We record that 
\begin{equation}\label{q energy is p energy}
\int_M \star |\dif u|^q = \int_M \star |F|^p.
\end{equation}
By Poincar\'e's inequality,
$$\|u\|_{W^{1, q}(M_{\rm fun})}^q \lesssim \int_M \star |\dif u|^q = \int_M \star |F|^p < \infty$$
since $F$ is $p$-tight; that is, we have $F \in L^p(M)$ and $u \in W^{1, q}_\loc(\tilde M, \RR)$, justifying any manipulations we shall make with these forms.

Our next lemma actually follows from the derivation of (\ref{p tight}) in \S\ref{duality derivation}.
However, it is instructive to give a proof which does not use the duality theorem as a black box.

\begin{lemma}
Let $F$ be a $p$-tight form, and let $u$ be its $q$-harmonic conjugate.
Then $u$ is $q$-harmonic, $F$ satisfies
\begin{equation}\label{dual solution}
F := -|\dif u|^{q - 2} \star \dif u.
\end{equation}
 and we have
\begin{equation}\label{strong duality}
\frac{1}{q} \int_M \star |\dif u|^q + \frac{1}{p} \int_M \star |F|^p + \int_M \dif u \wedge F = 0.
\end{equation}.
\end{lemma}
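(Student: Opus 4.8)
The plan is to verify each of the three assertions directly from the defining relation $\dif u = (-1)^d |F|^{p-2} \star F$ and the $p$-tightness of $F$, using basic identities for the Hodge star and the conjugate exponents. First I would establish (\ref{dual solution}), i.e.\ that $F = -|\dif u|^{q-2} \star \dif u$. Starting from $\dif u = (-1)^d |F|^{p-2} \star F$, apply $\star$ to both sides and use $\star\star = (-1)^{k(d-k)}$ on $k$-forms (here $F$ is a $d-1$-form, so $\star F$ is a $1$-form and $\star\star F = (-1)^{d-1} F$) to solve for $\star \dif u$ in terms of $F$. Taking pointwise norms gives $|\dif u| = |F|^{p-1}$, hence $|\dif u|^{q-2} = |F|^{(p-1)(q-2)} = |F|^{p - q}$ using $(p-1)(q-1) = 1$, i.e.\ $(p-1)q = p$. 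Substituting back and tracking the sign $(-1)^d$ against $(-1)^{d-1}$ from $\star\star$ yields (\ref{dual solution}); this is a routine sign chase and is the least risky step.

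Next I would prove $u$ is $q$-harmonic. Since $F$ is $p$-tight, $\dif^*(|F|^{p-2} F) = 0$; but $|F|^{p-2} F$ is (up to the sign $(-1)^d$ and a star) exactly $\dif u$ by the definition — more precisely, from $\dif u = (-1)^d |F|^{p-2}\star F$ I would instead want to identify $|\dif u|^{q-2}\dif u$ with $\pm\star(|F|^{p-2}F)$. Indeed $|\dif u|^{q-2}\dif u = |F|^{p-q}\cdot (-1)^d |F|^{p-2}\star F = (-1)^d |F|^{2p - q - 2}\star F$, and $2p - q - 2 = 2p - q - 2$; using $q = p/(p-1)$ one checks $2p - q - 2 = (p-2)\cdot\frac{p}{p-1}\cdot$(something) — rather than risk an arithmetic slip here, the clean route is: $\dif^* \beta = (-1)^{?}\star \dif \star \beta$, and $|F|^{p-2}F = (-1)^d \star \dif u$ (invert the defining relation as in step one), so $\dif^*(|F|^{p-2}F) = 0$ becomes $\dif^*(\star \dif u) = 0$, which is $\pm \star \dif(\star\star \dif u) = \pm\star \dif \dif u$... no: more directly, $\dif^* \circ \star = \pm \star \circ \dif$ on appropriate degrees, so $0 = \dif^*(|F|^{p-2}F) = \dif^*((-1)^d \star \dif u) = \pm \star \dif(\dif u)$ — which is automatically zero and says nothing. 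The correct identity is that $q$-harmonicity $\dif^*(|\dif u|^{q-2}\dif u) = 0$ should follow from $\dif F = 0$, not from (\ref{p tight 2}): since $|\dif u|^{q-2}\dif u = \pm \star F$ (by the computation $|\dif u|^{q-2}\dif u = \pm|F|^{p-2}\cdot|F|^{p-q}\cdot(\star F)/|F|^{?}$, cleaned up via the exponent identity $(p-1)(q-2) + (p-2) = 0$), we get $\dif^*(|\dif u|^{q-2}\dif u) = \pm\dif^*(\star F) = \pm\star \dif F = 0$. So the two halves of the $p$-tight system get swapped under the duality, which is the conceptual point. I would state the exponent identity $(p-1)(q-2) + p - 2 = 0$ explicitly and then the rest is immediate.

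Finally, for (\ref{strong duality}): by (\ref{q energy is p energy}) we have $\frac1q\int_M \star|\dif u|^q + \frac1p\int_M\star|F|^p = \left(\frac1q + \frac1p\right)\int_M\star|F|^p = \int_M\star|F|^p$, so it remains to show $\int_M \dif u \wedge F = -\int_M \star|F|^p$. Pointwise, $\dif u \wedge F = (-1)^d |F|^{p-2}(\star F)\wedge F = (-1)^d |F|^{p-2}\,\langle \star F, \star F\rangle\,\vol$ up to orientation — more carefully, for a $1$-form $\omega$ and $(d-1)$-form $F$, $\omega \wedge F = \langle \omega, \star F\rangle \star 1$ (with a sign depending on conventions, say $(-1)^{d-1}$), so $\dif u \wedge F = (-1)^d|F|^{p-2}\langle\star F,\star F\rangle\star 1 \cdot (\pm 1) = \pm |F|^{p-2}|F|^2 \star 1 = \pm|F|^p\star 1$, and I would pin down the overall sign to be $-1$ from the $(-1)^d$ in the definition matching the Hodge-pairing sign. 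Integrating gives the claim.

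\emph{Main obstacle.} There is no deep difficulty — the entire lemma is bookkeeping of signs and exponents. The one place to be careful, and where I expect to spend the most effort getting right rather than "grinding," is fixing a consistent set of sign conventions for $\star\star$ on $1$-forms versus $(d-1)$-forms, for $\dif^* = \pm\star\dif\star$, and for the pairing $\omega\wedge\star\eta = \langle\omega,\eta\rangle\star 1$, so that the three signs in (\ref{dual solution}), in the $q$-harmonic equation, and in (\ref{strong duality}) all come out as written (note in particular the $(-1)^d$ in (\ref{inverse extremality}) and the overall minus sign in (\ref{dual solution}) and (\ref{strong duality})). Once a convention is fixed at the start of the proof, each of the three parts is a one- or two-line computation. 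Accordingly I would open the proof by recording the needed star/codifferential identities and the exponent relations $(p-1)(q-1)=1$ and $(p-1)(q-2) = -(p-2)$, then dispatch the three claims in order.
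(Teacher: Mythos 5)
Your proposal is correct and follows essentially the same route as the paper: derive (\ref{dual solution}) from the exponent identity $(p-2)(q-1)+(q-2)=0$, observe that $|\dif u|^{q-2}\dif u = \pm\star F$ so that $q$-harmonicity follows from $\dif F = 0$ (the two halves of the $p$-tight system swap under the duality, exactly as you say), and verify (\ref{strong duality}) from (\ref{q energy is p energy}) together with the pointwise identity $\dif u \wedge F = -|F|^p \star 1$. Your insistence on pinning down the sign so that $\int_M \dif u \wedge F = -\int_M \star|F|^p$ is the right instinct, since that is precisely what makes the three terms sum to zero.
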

\begin{proof}
We first use
$$(p - 2)(q - 1) + (q - 2) = 0$$
to prove that
$$|\dif u|^{q - 2} \star \dif u = (-1)^d |F|^{(q - 2)(p - 1)} \star \star |F|^{p - 2} F = - |F|^{(q - 2)(p - 1) - (p - 2)} F = - F.$$
Thus we have (\ref{dual solution}), and moreover
$$\dif \star (|\dif u|^{q - 2} \dif u) = - \dif F = 0$$
so that $u$ is $q$-harmonic.
By (\ref{q energy is p energy}),
$$\frac{1}{q} \int_M \star |\dif u|^q + \frac{1}{p} \int_M \star |F|^p = \left[\frac{1}{q} + \frac{1}{p}\right] \int_M \star |F|^p = \int_M \star |F|^p.$$
But
$$\int_M \dif u \wedge F = (-1)^d \int_M |F|^{p - 2} \star F \wedge F = \int_M \star |F|^p,$$
so both sides of (\ref{strong duality}) are equal to $\int_M \star |F|^p$.
\end{proof}

%%%%%%%%%%%%%%%%%%%%%%%
\subsection{\texorpdfstring{Existence of tight forms}{Existence of tight forms}}
We now show that the $p$-tight forms in a class $\rho \in H^{d - 1}(M, \RR)$ converge to a form $F$ which minimizes its comass in $\rho$.
In other words, $\Comass(F)$ equals the costable norm $\Comass(\rho)$.
Since this is such a useful condition, we define that a closed $d - 1$-form $F$ has \dfn{best comass} if $\Comass(F) = \Comass([F])$.

\begin{lemma}
Let $1 < p < \infty$, let $F_p$ be a $p$-tight form, and let $B$ range over Lipschitz $d - 2$-forms. Then
\begin{equation}\label{infinity magnetic rules p magnetic}
	\|F_p\|_{L^p} \leq \vol(M)^{1/p} \inf_B \Comass(F + \dif B)
\end{equation}
\end{lemma}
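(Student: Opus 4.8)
The plan is to exploit the fact that $F_p$ minimizes the functional $J_p(F) = \frac1p \int_M \star |F|^p$ among all forms in its cohomology class $\rho$, so in particular $J_p(F_p) \le J_p(F_p + \dif B)$ for every Lipschitz $d-2$-form $B$. But I would compare instead against a general competitor: since the cohomology class $\rho = [F_p]$ is an affine subspace of $L^p(M, \Omega^{d-1})$, and any closed $L^p$ form $G$ cohomologous to $F_p$ differs from $F_p$ by $\dif B$ for some $B$ (up to approximation), minimality gives
\begin{equation*}
\int_M \star |F_p|^p \le \int_M \star |F_p + \dif B|^p
\end{equation*}
for all such $B$. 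The right-hand side is bounded above using that $|F_p + \dif B|$ is, pointwise, at most the comass of $F_p + \dif B$ when $F_p + \dif B$ is closed --- indeed, as recalled in the preliminaries, for a closed $d-1$-form the comass equals the $L^\infty$ norm of the associated vector field, so $|F_p + \dif B|(x) \le \|F_p + \dif B\|_{L^\infty} = \Comass(F_p + \dif B)$ for a.e.\ $x$. Hence $\int_M \star |F_p + \dif B|^p \le \vol(M) \Comass(F_p + \dif B)^p$, and taking $p$-th roots yields $\|F_p\|_{L^p} \le \vol(M)^{1/p} \Comass(F_p + \dif B)$.

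Taking the infimum over all Lipschitz $d-2$-forms $B$ gives (\ref{infinity magnetic rules p magnetic}). The only subtlety is the approximation step: one must ensure that a general $L^p$ representative of $\rho$ can be written (or approximated in $L^p$) as $F_p + \dif B$ with $B$ Lipschitz, so that smooth or Lipschitz competitors suffice to compute the infimum on the right-hand side. Since the statement only asserts the inequality with the infimum over Lipschitz $B$ --- not equality --- this direction is the easy one: I simply need that for each fixed Lipschitz $B$, the form $F_p + \dif B$ is closed and cohomologous to $F_p$, which is immediate, and then minimality of $F_p$ for $J_p$ applies directly. No density argument is actually needed for the stated inequality.

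\textbf{Main obstacle.} There is essentially no serious obstacle here; the one point requiring a word of care is that $\dif B$ for Lipschitz $B$ is only in $L^\infty$, not smooth, so one should note that the minimality of $F_p$ among cohomologous $L^p$ forms (established in the preceding proposition via strict convexity and coercivity of $J_p$ on $L^p(M,\Omega^{d-1})$) indeed covers competitors of the form $F_p + \dif B$ with $B$ merely Lipschitz, which it does since $\dif B \in L^\infty(M, \Omega^{d-1}) \subseteq L^p(M, \Omega^{d-1})$ on the compact manifold $M$. With that observation the pointwise bound $|G| \le \Comass(G)$ a.e.\ for closed $d-1$-forms $G$ (from the $\Comass = \|\cdot\|_{L^\infty}$ identity) closes the argument.
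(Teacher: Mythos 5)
Your proposal is correct and follows essentially the same route as the paper: minimality of $F_p$ for the $L^p$ energy in its cohomology class, followed by H\"older's inequality (your pointwise bound $|G|\le \Comass(G)$ a.e.\ integrated over $M$ is exactly this) and the identity $\Comass = \|\cdot\|_{L^\infty}$ for closed $d-1$-forms. The paper's proof is a one-line version of the same argument, and your added remark that $\dif B \in L^\infty \subseteq L^p$ for Lipschitz $B$ correctly disposes of the only point needing care.
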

\begin{proof}
By H\"older's inequality and the fact that $F_p$ is $p$-tight,
$$\|F_p\|_{L^p} \leq \|F + \dif B\|_{L^p} \leq \vol(M)^{1/p} \|F + \dif B\|_{L^\infty} = \vol(M)^{1/p} \Comass(F + \dif B),$$
hence the same holds for the infimum.
\end{proof}

\begin{proposition}\label{existence infinity}
Let $\rho \in H^{d - 1}(M, \RR)$.
For each $1 < p < \infty$, let $F_p$ be the $p$-tight form representing $\rho$.
Then there exists a closed $d - 1$-form $F$ such that:
\begin{enumerate}
\item $F_p \to F$ weakly in $L^r$ along a subsequence, for any $1 < r < \infty$.
\item $F$ is a best comass representative of $\rho$.
\end{enumerate}
\end{proposition}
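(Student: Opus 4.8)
The plan is to extract $F$ as a weak limit of a subsequence of $(F_p)$, using the uniform bound on $\|F_p\|_{L^p}$ from the previous lemma, and then to identify the limit as a best-comass representative of $\rho$ by a lower-semicontinuity argument combined with the matching upper bound coming from the same lemma. First I would note that by (\ref{infinity magnetic rules p magnetic}), $\|F_p\|_{L^p} \le \vol(M)^{1/p}\Comass(\rho)$. For any fixed $1 < r < \infty$, once $p > r$ Hölder's inequality gives $\|F_p\|_{L^r} \le \vol(M)^{1/r - 1/p}\|F_p\|_{L^p} \le \vol(M)^{1/r}\Comass(\rho)$, so the family $(F_p)_{p > r}$ is bounded in $L^r(M, \Omega^{d-1})$. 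Since $L^r$ is reflexive, a subsequence converges weakly in $L^r$; by a diagonal argument over $r = 2, 3, 4, \dots$ I can pass to a single subsequence $p_k \to \infty$ along which $F_{p_k} \weakto F$ weakly in $L^r$ for every $1 < r < \infty$. Weak $L^2$ convergence preserves the condition $\dif F = 0$ (test against $\dif^* B$ for smooth $d-2$-forms $B$ and pass to the limit), and it preserves the cohomology class: pairing $F_{p_k}$ against a fixed closed smooth $1$-form representing an arbitrary class in $H^1$ and using de Rham duality shows $[F] = \rho$. This gives item (1) and the closedness needed for (2).

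For item (2) I must show $\Comass(F) = \Comass(\rho)$. Since $F$ represents $\rho$, automatically $\Comass(F) \ge \Comass(\rho)$, so it suffices to prove $\Comass(F) \le \Comass(\rho)$. Because $F$ is closed, the remark in the calibrated-geometry subsection gives $\Comass(F) = \|F\|_{L^\infty}$, so I want $\|F\|_{L^\infty} \le \Comass(\rho)$. I would get this from weak lower semicontinuity of the $L^r$ norm: for each $r$,
\begin{equation*}
\|F\|_{L^r} \le \liminf_{k \to \infty} \|F_{p_k}\|_{L^r} \le \liminf_{k\to\infty} \vol(M)^{1/r - 1/p_k}\|F_{p_k}\|_{L^{p_k}} \le \vol(M)^{1/r}\Comass(\rho).
\end{equation*}
Dividing by $\vol(M)^{1/r}$ and letting $r \to \infty$, the left side tends to $\|F\|_{L^\infty}$ (on a finite-measure space $\|g\|_{L^r}/\vol(M)^{1/r} \to \|g\|_{L^\infty}$ for $g \in L^\infty$; if $F \notin L^\infty$ the same limit is $+\infty$, forcing the inequality $\|F\|_{L^\infty} \le \Comass(\rho) < \infty$ and in particular $F \in L^\infty$). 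Hence $\|F\|_{L^\infty} \le \Comass(\rho)$, so $\Comass(F) = \|F\|_{L^\infty} \le \Comass(\rho) \le \Comass(F)$, giving equality.

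The main obstacle I anticipate is the clean passage from the $L^r$ bounds to the $L^\infty$ bound: one must be careful that the exponents $1/r - 1/p_k$ behave correctly (they are positive once $p_k > r$, so the Hölder step is legitimate) and that the limit $r \to \infty$ is taken only after the weak limit $k \to \infty$ has been fixed, so that $F$ is a single object whose $L^r$ norms are all controlled simultaneously — this is exactly what the diagonal subsequence buys. A secondary technical point is verifying that weak $L^r$ limits of closed forms are closed and stay in the cohomology class $\rho$; this is routine since $\dif$ and the de Rham pairing against smooth test forms are continuous for the weak topology, but it should be stated carefully because the $F_p$ are only assumed $L^p$, not continuous. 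Everything else — reflexivity, lower semicontinuity of norms under weak convergence, the elementary fact about $L^r$ norms converging to the $L^\infty$ norm on a finite measure space — is standard.
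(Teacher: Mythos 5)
Your proposal is correct and follows essentially the same route as the paper: the uniform $L^r$ bounds from H\"older and the $L^p$-minimality of $F_p$, weak compactness plus a diagonal subsequence, lower semicontinuity of the $L^r$ norms under weak convergence, and the limit $r \to \infty$ to extract the $L^\infty$ bound. The only cosmetic difference is that you bound directly by $\Comass(\rho)$ while the paper compares against an arbitrary $L^\infty$ representative $G$ and then lets $G$ vary, which amounts to the same thing.
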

\begin{proof}
Let $G$ be an $L^\infty$ representative of $\rho$.
By H\"older's inequality and (\ref{infinity magnetic rules p magnetic}), for every $r$,
\begin{equation}\label{uniform bounds in p by best curl}
	\|F_p\|_{L^r} \leq \vol(M)^{\frac{1}{r} - \frac{1}{p}} \|F_p\|_{L^p} \leq \vol(M)^{\frac{1}{r}} \|G\|_{L^\infty}.
\end{equation}
Thus a compactness argument gives $F_p \to F$ for some $d - 1$-form $F$, weakly in $L^r$, and 
$$\|F\|_{L^r} \leq \liminf_{p \to \infty} \|F_p\|_{L^r} \leq \vol(M)^{\frac{1}{r}} \|G\|_{L^\infty}.$$
Diagonalizing, we may assume that $F_p \to F$ weakly in $L^r$ for every such $r$, and taking $r \to \infty$, we conclude 
\begin{equation}\label{infinity magnetics have best curl}
	\|F\|_{L^\infty} \leq \|G\|_{L^\infty}.
\end{equation}
Moreover, $[F] = \lim_{p \to \infty} [F_p] = \rho$.
Since $G$ was arbitrary in (\ref{infinity magnetics have best curl}), $F$ has best comass.
\end{proof}

\begin{definition}
The $d - 1$-form $F$ of best comass in Proposition \ref{existence infinity} is called a \dfn{tight form}.
\end{definition}

It is a corollary of Proposition \ref{existence infinity} that every cohomology class is represented by a form of best comass.
This could be shown more directly using Alaoglu's theorem on the weakstar topology of $L^\infty$.
However, since $p$-tight forms are intimately related to $q$-harmonic functions, it is more convenient to work with tight forms than general forms of best comass.

\begin{lemma}\label{p tights approximate L}
Let $F_p$ be the $p$-tight representative of $\rho$. Then 
$$\lim_{p \to \infty} \|F_p\|_{L^p} = \Comass(\rho).$$
\end{lemma}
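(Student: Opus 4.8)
The plan is to sandwich $\|F_p\|_{L^p}$ between quantities that both converge to $\Comass(\rho)$ as $p \to \infty$. For the upper bound, I would combine \eqref{infinity magnetic rules p magnetic} with the definition of the costable norm: since $\|F_p\|_{L^p} \leq \vol(M)^{1/p} \inf_B \Comass(F_p + \dif B) = \vol(M)^{1/p} \Comass(\rho)$, and $\vol(M)^{1/p} \to 1$, we get $\limsup_{p \to \infty} \|F_p\|_{L^p} \leq \Comass(\rho)$. (Strictly, one should note $\inf_B \Comass(F_p + \dif B) = \Comass(\rho)$ because as $B$ ranges over Lipschitz $d-2$-forms, $F_p + \dif B$ ranges over a dense subset of the cohomology class in a topology fine enough to compute the infimum of comass; alternatively just use an $L^\infty$ representative $G$ of $\rho$ as in the proof of Proposition \ref{existence infinity}, giving $\|F_p\|_{L^p} \leq \vol(M)^{1/p}\|G\|_{L^\infty}$ and then take the infimum over such $G$.)

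For the lower bound, I would use the tight form $F$ from Proposition \ref{existence infinity}, along which $F_p \to F$ weakly in every $L^r$. Weak lower semicontinuity of the $L^r$ norm gives, for each fixed $r$,
\[
\|F\|_{L^r} \leq \liminf_{p \to \infty} \|F_p\|_{L^r} \leq \liminf_{p \to \infty} \vol(M)^{\frac{1}{r} - \frac{1}{p}} \|F_p\|_{L^p} = \vol(M)^{1/r} \liminf_{p \to \infty} \|F_p\|_{L^p},
\]
using H\"older's inequality (as in \eqref{uniform bounds in p by best curl}) for the middle step. Letting $r \to \infty$ yields $\|F\|_{L^\infty} \leq \liminf_{p \to \infty} \|F_p\|_{L^p}$. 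But $F$ has best comass and represents $\rho$, so $\|F\|_{L^\infty} = \Comass(F) = \Comass(\rho)$ (recall that for a closed $d-1$-form, comass equals the $L^\infty$ norm, as established in the discussion of calibrated geometry in \S\ref{prelims}). Hence $\Comass(\rho) \leq \liminf_{p \to \infty} \|F_p\|_{L^p}$.

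Combining the two bounds gives $\lim_{p \to \infty} \|F_p\|_{L^p} = \Comass(\rho)$. I don't anticipate a serious obstacle here; the only point requiring mild care is the upper bound, where one must be honest that $\inf_B \Comass(F_p + \dif B)$ genuinely equals the costable norm $\Comass(\rho)$ rather than something potentially larger — but since an $L^\infty$-minimizing representative (or a near-minimizer) of $\rho$ can be approximated appropriately, or one simply invokes the estimate against an arbitrary $L^\infty$ representative $G$ and then optimizes over $G$, this is routine. Note also that the limit exists (not merely $\liminf = \limsup$ a priori) precisely because the sandwiching forces it.
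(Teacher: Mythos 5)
Your argument is essentially the paper's: the upper bound is obtained exactly as in the paper (Hölder against a best-comass representative of $\rho$, using that $F_p$ minimizes the $L^p$ norm in its class), and the lower bound rests on the same two facts the paper uses, namely weak lower semicontinuity of the $L^r$ norms and the best-comass property of the weak limit; the paper merely packages the lower bound as a proof by contradiction. One small point needs care in your version: Proposition \ref{existence infinity} only gives $F_p \weakto F$ along \emph{a} subsequence, so your chain of inequalities bounds $\liminf \|F_p\|_{L^p}$ taken along that particular subsequence, which need not equal the $\liminf$ over the full sequence. The fix is routine and is precisely why the paper argues by contradiction: pass to a subsequence attaining $\liminf_{p\to\infty}\|F_p\|_{L^p}$, extract (by the compactness argument of Proposition \ref{existence infinity}) a further weakly convergent subsequence whose limit $\tilde F$ is again a best-comass representative, and apply your inequalities there to conclude $\Comass(\rho) = \|\tilde F\|_{L^\infty} \leq \liminf_{p\to\infty}\|F_p\|_{L^p}$. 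With that adjustment the proof is complete and matches the paper's.
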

\begin{proof}
We follow \cite[Lemma 2.7]{daskalopoulos2020transverse}.
Let $F$ be a best comass representative of $\rho$, so
$$\|F\|_{L^\infty} = \Comass(F) = \Comass(\rho).$$
Since $F_p$ is $p$-tight, H\"older's inequality implies 
$$\|F_p\|_{L^p} \leq \|F\|_{L^p} \leq \vol(M)^{\frac{1}{p}} \Comass(\rho).$$
Therefore 
$$\limsup_{p \to \infty} \|F_p\|_{L^p} \leq \Comass(\rho).$$
To prove the converse, suppose that for some $\varepsilon > 0$,
$$\liminf_{p \to \infty} \|F_p\|_{L^p} \leq \Comass(\rho) - \varepsilon < \Comass(\rho).$$
Along a subsequence which attains the limit inferior, $F_p$ converges weakly in every $L^r$ to a tight form $\tilde F$ such that (by H\"older's inequality)
$$\|\tilde F\|_{L^r} \leq \liminf_{p \to \infty} \|F_p\|_{L^r} \leq \liminf_{p \to \infty} \vol(M)^{\frac{1}{r}} \|\tilde F\|_{L^\infty} \leq \vol(M)^{\frac{1}{r}} (\Comass(\rho) - \varepsilon).$$
Taking $r \to \infty$, we obtain $\Comass(\tilde F) < \Comass(\rho)$, which contradicts the definition of the costable norm $\Comass(\rho)$.
\end{proof}

%%%%%%%%%%%%%%%%%%%%
\subsection{\texorpdfstring{$1$-harmonic conjugates of tight forms}{One-harmonic conjugates of tight forms}}
We now construct the $1$-harmonic conjugates of a tight form.
In the special case that the tight form $F$ is a calibration, that is $\Comass(F) = 1$, a $1$-harmonic conjugate will be a $1$-harmonic function on the universal cover whose level sets are calibrated by $F$.

\begin{definition}
Let $F$ be a tight form of cohomology class $\rho$.
A nonconstant $\Gamma$-equivariant function of least gradient $u \in BV_\loc(\tilde M)$ is called a \dfn{$1$-harmonic conjugate} of $F$ if
\begin{equation}\label{1 extremality}
\dif u \wedge F = \Comass(\rho) \star |\dif u|.
\end{equation}
\end{definition}

By Proposition \ref{Anzellotti wedge product exists}, the wedge product $\dif u \wedge F$ exists as an Anzellotti wedge product, in particular as a Radon measure, and
$$\Mass(\dif u \wedge F) \leq \Comass(F) \Mass(\dif u) = \Comass(\rho) \Mass(\dif u).$$

\begin{lemma}\label{1 extremality implies least gradient}
Let $F$ be a tight form, and let $u$ be a $1$-harmonic conjugate of $F$.
Then $u$ has least gradient.
\end{lemma}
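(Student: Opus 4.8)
The plan is to exploit the fact that $u$ is $\Gamma$-equivariant, so that to check the least gradient condition it suffices to compare $\Mass(\dif u)$ against $\Mass(\dif u + \dif v)$ for $\Gamma$-invariant $v$, and for such $v$ the quantity $\dif u + \dif v$ is the Ruelle–Sullivan-type current of an equivariant function in the same cohomology class as $\dif u$. The key point is that the identity (\ref{1 extremality}), namely $\dif u \wedge F = \Comass(\rho)\star|\dif u|$, together with the comass bound $\Comass(F) = \Comass(\rho)$, pins down $\Mass(\dif u)$ exactly as a pairing with $F$, and that pairing is a cohomological invariant.

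First I would integrate (\ref{1 extremality}) over a fundamental domain $M_{\rm fun}$, or equivalently over $M$ after descending: since $\dif u$ descends to a closed $1$-current on $M$ and $F$ is a closed $d-1$-form on $M$, the mass of $\dif u$ satisfies
\begin{equation*}
\Comass(\rho)\,\Mass(\dif u) = \int_M \dif u \wedge F.
\end{equation*}
Next, for any $\Gamma$-invariant $v \in BV$, the current $\dif u + \dif v$ is again closed, descends to $M$, and lies in the same cohomology class $\alpha = [\dif u] \in H^1(M,\RR)$ (since $\dif v$ is exact on $M$). Therefore, by Stokes' theorem (valid because $F$ is closed and $\dif v$ is exact, using the Anzellotti pairing to make sense of the product of the distribution $\dif v$ with the continuous form $F$ — here I would invoke the relevant compensated-compactness statement from the appendix, e.g.\ Proposition \ref{Anzellotti wedge product exists}),
\begin{equation*}
\int_M (\dif u + \dif v)\wedge F = \int_M \dif u \wedge F.
\end{equation*}
Then the general mass–comass inequality $\Mass(\dif u + \dif v)\,\Comass(F) \geq \int_M (\dif u + \dif v)\wedge F$ gives
\begin{equation*}
\Comass(\rho)\,\Mass(\dif u + \dif v) \;\geq\; \int_M (\dif u + \dif v)\wedge F \;=\; \int_M \dif u \wedge F \;=\; \Comass(\rho)\,\Mass(\dif u),
\end{equation*}
and since $\Comass(\rho) > 0$ (as $\rho \neq 0$, using nonconstancy of $u$ and the hypotheses of Theorem \ref{existence of infinity tight forms}), we cancel it to obtain $\Mass(\dif u) \leq \Mass(\dif u + \dif v)$. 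This is exactly the least gradient condition.

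The main obstacle I anticipate is the justification of the Stokes-type identity $\int_M \dif v \wedge F = 0$ when $v$ is merely $BV$ (not Lipschitz) and $F$ is merely continuous (not $C^1$): neither factor is smooth, so the product $\dif v \wedge F$ and its integration by parts must be handled via Anzellotti's theory of pairings between divergence-measure fields and $BV$ functions, as developed in the appendix. One must check that the pairing is compatible with cohomology — i.e.\ that it only depends on the class $[\dif v + \dif u]$ and not on the representative — which is where the bulk of the technical care goes; the comass estimate itself is then immediate. A secondary (but routine) point is confirming that $\dif u \wedge F$, already known to exist as a Radon measure by (\ref{1 extremality}) and Proposition \ref{Anzellotti wedge product exists}, has total mass genuinely equal to the integral $\int_M \dif u \wedge F$ rather than merely bounded by it, which follows from the pointwise identity (\ref{1 extremality}) together with $\Comass(\rho) > 0$.
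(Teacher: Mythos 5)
Your proposal is correct and is essentially the paper's own proof: integrate the extremality identity (\ref{1 extremality}), use Stokes' theorem to see that the pairing with $F$ is unchanged by adding $\dif v$ for $\Gamma$-invariant $v$, apply the mass--comass inequality (\ref{Anzellotti Holder inequality}), and use that $F$ has best comass. The extra care you flag about the Anzellotti pairing and the positivity of $\Comass(\rho)$ is sensible but does not change the argument.
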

\begin{proof}
This is essentially a calibration argument.
Let $v$ be $\Gamma$-invariant.
Then, by (\ref{1 extremality}), Stokes' theorem, and the fact that $F$ has best comass,
$$\Mass(\dif u) \Comass(\rho) = \int_M \dif u \wedge F = \int_M (\dif u + \dif v) \wedge F \leq \Mass(\dif u + \dif v) \Comass(F) = \Mass(\dif u + \dif v) \Comass(\rho).$$
We conclude that $\Mass(\dif u) \leq \Mass(\dif u + \dif v)$.
\end{proof}

We are going to derive (\ref{1 extremality}) as a limit of the duality relation (\ref{strong duality}), which itself was a consequence of the convex duality theorem.
But (\ref{inverse extremality}) blows up as $p \to \infty$, so we must ``renormalize'' it before taking the limit $q \to 1$, as in \cite[\S3.2]{daskalopoulos2020transverse}.
This goal is accomplished by the next lemma.

\begin{lemma}\label{normalizations converge}
Let $\rho \in H^{d - 1}$, and let $k_p$ satisfy 
$$k_p^{1 - p} = \int_M \star |F_p|^p$$
where $F_p$ is the $p$-tight representative of $\rho$.
Then, as $p \to \infty$, $k_p \to \Comass(\rho)^{-1}$.
\end{lemma}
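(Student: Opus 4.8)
The plan is to extract the asymptotic behavior of $k_p$ from Lemma \ref{p tights approximate L}. Recall that by definition $k_p^{1-p} = \int_M \star |F_p|^p = \|F_p\|_{L^p}^p$, so taking logarithms gives $(1-p)\log k_p = p \log \|F_p\|_{L^p}$, whence
\[
\log k_p = \frac{p}{1-p} \log \|F_p\|_{L^p} = -\frac{p}{p-1} \log \|F_p\|_{L^p}.
\]
Since $p/(p-1) \to 1$ as $p \to \infty$, the problem reduces to showing that $\log \|F_p\|_{L^p} \to \log \Comass(\rho)$, which is exactly the content of Lemma \ref{p tights approximate L} (once we observe that $\Comass(\rho) > 0$, which holds because $\rho \neq 0$ forces every representative to have positive $L^\infty$ norm, hence positive comass — though here I should be careful, as the statement of the current lemma does not explicitly assume $\rho \neq 0$; I would add that hypothesis or note that it is in force from the surrounding context). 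Then $\log k_p \to \log \Comass(\rho)^{-1}$, and exponentiating gives $k_p \to \Comass(\rho)^{-1}$.

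First I would record the identity $k_p = \|F_p\|_{L^p}^{-p/(p-1)}$ obtained by solving $k_p^{1-p} = \|F_p\|_{L^p}^p$ for $k_p$ (both sides are positive since $F_p \not\equiv 0$, again using $\rho \neq 0$). Next I would invoke Lemma \ref{p tights approximate L} to get $\|F_p\|_{L^p} \to \Comass(\rho) \in (0,\infty)$. Then I would pass to the limit in the expression $\|F_p\|_{L^p}^{-p/(p-1)}$: since the base converges to a strictly positive finite limit and the exponent $-p/(p-1)$ converges to $-1$, continuity of $(x,t) \mapsto x^t$ on $(0,\infty) \times \mathbb{R}$ yields $k_p \to \Comass(\rho)^{-1}$. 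This is most cleanly phrased by taking logs as above so that one only needs continuity of $\log$ and of multiplication.

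I do not expect any genuine obstacle here; the lemma is essentially a bookkeeping corollary of Lemma \ref{p tights approximate L}. The only point requiring a moment's care is ensuring $\Comass(\rho) > 0$ so that the limit $\Comass(\rho)^{-1}$ makes sense and the logarithm is finite — this uses the standing assumption that $\rho$ is a nonzero cohomology class, together with the fact noted in the preliminaries that for a closed $d-1$-form $\Comass(F) = \|F\|_{L^\infty}$, so a nonzero class cannot be represented by a form of zero comass. A secondary, entirely routine point is that the exponent $p/(p-1)$ stays bounded (it decreases to $1$ for $p \geq 2$), so there is no issue raising a bounded-away-from-$0$-and-$\infty$ quantity to it.
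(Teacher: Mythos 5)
Your proof is correct and follows essentially the same route as the paper: both arguments reduce to Lemma \ref{p tights approximate L} via the identity $k_p^{-1/q} = \|F_p\|_{L^p}$ (your exponent $p/(p-1)$ is exactly $q$), take logarithms, and use $q \to 1$ together with $\Comass(\rho) \in (0,\infty)$. Your added remark that the hypothesis $\rho \neq 0$ is needed for $\Comass(\rho) > 0$ is a fair observation about the lemma's statement, but it does not change the argument.
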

\begin{proof}
We follow \cite[Lemma 3.4]{daskalopoulos2020transverse}.
By Lemma \ref{p tights approximate L},
$$\lim_{p \to \infty} k_p^{-\frac{1}{q}} = \lim_{p \to \infty} \|F_p\|_{L^p} = \Comass(\rho).$$
Taking logarithms we see that $q^{-1} \log k_p \to -\log \Comass(\rho)$, and since $q \to 1$ the claim follows.
\end{proof}

Theorem \ref{existence of infinity tight forms} is the conjunction of the following theorem and Proposition \ref{existence infinity}.

\begin{theorem}\label{existence 1}
Let $\rho \in H^{d - 1}(M, \RR)$ be nonzero, let $F_p$ be its $p$-tight representative, and let 
$$F := \lim_{p \to \infty} F_p$$
be the tight representative of $F$.
Let $1/p + 1/q = 1$, and let $u_q$ be the function on $\tilde M$ with mean zero on $M_{\rm fun}$ and
$$\dif u_q = (-1)^{d - 1} k_p^{p - 1} |F_p|^{p - 2} \star F_p.$$
Then there exists an equivariant function $u \in BV_\loc(\tilde M)$ such that:
\begin{enumerate}
\item Along a subsequence as $q \to 1$, $u_q \weakto^* u$ in $BV_\loc(\tilde M)$, and for every $1 \leq r < \frac{d}{d - 1}$, $u_q \to u$ in $L^r(\tilde M)$.
\item $u$ is a $1$-harmonic conjugate of $F$.
\end{enumerate}
\end{theorem}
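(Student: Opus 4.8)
The plan is to extract a limit of the renormalized conjugates $u_q$ using the uniform estimates already established, and then pass to the limit in the strong duality relation (\ref{strong duality}) to obtain the extremality relation (\ref{1 extremality}). First I would establish uniform control on the masses $\Mass(\dif u_q)$. From the definition $\dif u_q = (-1)^{d-1} k_p^{p-1} |F_p|^{p-2} \star F_p$, we compute $\Mass(\dif u_q) = \|\dif u_q\|_{L^1} = k_p^{p-1} \int_M \star |F_p|^{p-1}$. Using Hölder's inequality, $\int_M \star |F_p|^{p-1} \leq \vol(M)^{1/p} \|F_p\|_{L^p}^{p-1}$, together with $k_p^{1-p} = \int_M \star |F_p|^p = \|F_p\|_{L^p}^p$ and Lemma \ref{p tights approximate L}, this gives a bound on $\Mass(\dif u_q)$ uniform as $q \to 1$, converging to $\vol(M)^{0}\Comass(\rho)^{-1} \cdot \Comass(\rho) \cdot \dots$ — in any case a finite bound. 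Combined with the mean-zero normalization on $M_{\rm fun}$ and Poincaré's inequality, this yields a uniform $BV(M_{\rm fun})$ bound, hence a uniform $BV_\loc(\tilde M)$ bound by equivariance (the representations $\alpha_q = k_p^{p-1}[|F_p|^{p-2}\star F_p]$ are bounded since the $F_p$ are bounded in every $L^r$).

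Next I would invoke the compactness of $BV_\loc$: along a subsequence, $u_q \weakto^* u$ in $BV_\loc(\tilde M)$, and by the Rellich--Kondrachov theorem $u_q \to u$ in $L^r_\loc$ for $1 \leq r < d/(d-1)$. Lemma \ref{L1 convergence preserves pi1} then guarantees that $u$ is $\alpha$-equivariant for the limiting representation $\alpha = \lim \alpha_q$, and that $\dif u$ descends to $M$. I would also check $u$ is nonconstant: since $\rho \neq 0$ and $F_p \to F$ with $F$ of best comass representing $\rho$, the limiting form $F \neq 0$, and the extremality relation (once established) would force $\dif u \neq 0$; alternatively one argues directly that $[\dif u] = \alpha \neq 0$ because $\alpha_q$ pairs nontrivially against a fixed cycle for $q$ near $1$ (this is where the normalization $k_p$ matters, and Lemma \ref{normalizations converge} keeps things from degenerating).

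The heart of the proof is passing to the limit in (\ref{strong duality}). Rewriting that identity for $F_p$ and its (unrenormalized) conjugate $\tilde u_q$, and then multiplying through by the appropriate power of $k_p$, one gets after using (\ref{q energy is p energy}) and Lemma \ref{normalizations converge} that
\begin{equation*}
\int_M \dif u_q \wedge F_p = -\frac{1}{q}\int_M \star|\dif u_q|^q \cdot k_p^{\text{(power)}} - \frac{1}{p}\int_M \star|F_p|^p \cdot k_p^{\text{(power)}} \longrightarrow -\Comass(\rho)\,\Mass(\dif u)
\end{equation*}
on the right (using $\frac1p \to 0$, $\frac1q \to 1$, the energy identities, and lower semicontinuity $\Mass(\dif u) \leq \liminf \frac1q\int_M \star|\dif u_q|^q$ from (\ref{q to 1 Holder})). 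On the left, since $F_p \to F$ weakly in every $L^r$ and $\dif u_q \weakto^* \dif u$ as measures with $F_p$ uniformly bounded in $L^\infty$-norm replaced by uniform $L^r$ bounds, I would pass to the limit via the Anzellotti compensated-compactness machinery of Appendix \ref{GMT appendix} (Proposition \ref{Anzellotti wedge product exists} and the associated weak continuity of the Anzellotti pairing under weak convergence of the measure factor and weak-$L^r$ convergence of the bounded vector-field factor), obtaining $\int_M \dif u_q \wedge F_p \to \int_M \dif u \wedge F$. Comparing, $\int_M \dif u \wedge F = -\Comass(\rho)\Mass(\dif u)$, but also $\Mass(\dif u \wedge F) \leq \Comass(F)\Mass(\dif u) = \Comass(\rho)\Mass(\dif u)$ and the sign works out so that the Anzellotti density $\dif u \wedge F$ must equal $\pm\Comass(\rho)\star|\dif u|$ pointwise (a.e.\ with respect to $|\dif u|$), which after tracking orientations is exactly (\ref{1 extremality}). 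Finally, Lemma \ref{1 extremality implies least gradient} upgrades this to $u$ having least gradient, completing the proof.

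The main obstacle I anticipate is the weak continuity of the Anzellotti wedge product: one factor ($\dif u_q$) converges only weakly-$*$ as measures while the other ($F_p$) converges only weakly in $L^r$, and neither is continuous — so one cannot simply multiply limits. This is precisely the compensated-compactness issue, and it must be handled by exploiting the structure ($F_p$ closed, $\dif u_q$ exact, so their pairing is a "div-curl" quantity) together with the precise normalization constants $k_p$. Getting the signs and the renormalization powers exactly right in the limiting identity, and then deducing the pointwise extremality from the integrated one via the sharp inequality $\Mass(\dif u \wedge F) \leq \Comass(\rho)\Mass(\dif u)$, is the delicate bookkeeping; the rest is soft.
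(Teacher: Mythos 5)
Your overall architecture matches the paper's: uniform $L^1$ bounds on $\dif u_q$ via H\"older and Lemma \ref{normalizations converge}, Rellich compactness in $BV_\loc$ and $L^r$, equivariance from Lemma \ref{L1 convergence preserves pi1}, passage to the limit in the renormalized strong duality (\ref{strong duality}), and finally the sharp Anzellotti inequality (\ref{Anzellotti Holder inequality}) plus Lemma \ref{measurable function is 1} to upgrade the integrated identity to the pointwise extremality (\ref{1 extremality}). However, there is a genuine gap at precisely the step you flag as the main obstacle: the convergence of $\int_M \dif u_q \wedge F_p$. You propose to handle it by a ``weak continuity of the Anzellotti pairing under weak convergence of the measure factor and weak-$L^r$ convergence of the other factor.'' No such continuity statement is proved in Appendix \ref{GMT appendix} (Proposition \ref{Anzellotti wedge product exists} only gives existence and the H\"older bound for a \emph{fixed} pair), and joint weak continuity of a bilinear pairing in both arguments is false in general; invoking unstated ``compensated-compactness machinery'' here leaves the central analytic step unjustified.

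The resolution is much softer than you anticipate, and you actually gesture at it without using it: both $\dif u_q$ and $F_p$ are \emph{closed} forms on a \emph{closed} manifold, so $\int_M \dif u_q \wedge F_p$ is a purely cohomological pairing, equal to $\int_M [\dif u_q] \wedge \rho$ since $[F_p] = \rho$ for all $p$. Because $H^1(M,\RR)$ is finite-dimensional and $[\dif u_q] \to [\dif u]$ (Lemma \ref{L1 convergence preserves pi1}), the limit is $\int_M [\dif u] \wedge \rho = \int_M \dif u \wedge F$, with no compensated compactness needed. Note also that one only needs the one-sided inequality $\Comass(\rho)\Mass(\dif u) \leq \int_M \dif u \wedge F$ from this limit (the lower semicontinuity (\ref{q to 1 Holder}) goes the right way for this), since the reverse inequality is exactly (\ref{Anzellotti Holder inequality}); your claimed equality with a minus sign is a bookkeeping slip, as $\int_M \dif u_q \wedge F_p = k_p^{1-p}\int_M \star|F_p|^p>0$ before renormalization. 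Finally, your nonconstancy argument should be pinned down as in the paper: if $u$ were constant then $[\dif u_q]\to 0$, whence $\|\dif u_q\|_{L^q}\to 0$ by (\ref{q Laplacian Sobolev regularity estimate}), forcing $\Comass(\rho)=\infty$ via the chain of limits in the $L^1$ bound, a contradiction.
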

\begin{proof}
Let $L := \Comass(\rho)$.
We first compute using H\"older's inequality and Lemma \ref{normalizations converge}
\begin{align}
\lim_{q \to 1} \|\dif u_q\|_{L^1}
&\leq \lim_{q \to 1} \vol(M)^{\frac{1}{p}} \left[\int_M \star |\dif u_q|^q\right]^{\frac{1}{q}} = \lim_{p \to \infty} \left[k_p^p \int_M \star |F_p|^p\right]^{\frac{1}{q}} \label{Rellich}\\
&= \lim_{p \to \infty} k_p^{\frac{1}{q}} = \lim_{p \to \infty} k_p = \frac{1}{L} \nonumber.
\end{align}
So by Rellich's theorem, $(u_q)$ is weakly compact in $BV$ and strongly compact in $L^r$ for $1 \leq r < \frac{d}{d - 1}$.
In particular, $\dif u_q \to \dif u$ in the weak topology of measures and $u_q \to u$ weakly in $BV$ and strongly in $L^r$.
As the limit of $\Gamma$-equivariant functions, $u$ is also $\Gamma$-equivariant by Lemma \ref{L1 convergence preserves pi1}.
In particular, $\dif u$ descends to a current on $M$.
Moreover, $[\dif u_q] \to [\dif u]$, and we have the bound (\ref{q to 1 Holder}) on $\Mass(\dif u)$.

We next must check that $u$ is nonconstant.
If $u$ is constant, then it is $\Gamma$-invariant, so $[\dif u_q] \to 0$.
By (\ref{q Laplacian Sobolev regularity estimate}), $\|\dif u_q\|_{L^q} \to 0$, so by (\ref{Rellich}), $L = \infty$, which is absurd.
Therefore $u$ is nonconstant.

Renormalizing (\ref{strong duality}), we obtain 
$$\frac{k_p^{-p}}{q} \int_M \star |\dif u_q|^q + \frac{1}{p} \int_M \star |F_p|^p = k_p^{1 - p} \int_M \dif u_q \wedge F_p.$$
Multiplying by $k_p^p$, we have 
\begin{equation}\label{1 strong duality before limits}
	\frac{1}{q} \int_M \star |\dif u_q|^q + \frac{k_p^p}{p} \int_M \star |F_p|^p = k_p \int_M \dif u_q \wedge F_p.
\end{equation}

Let $\mu(U) := \Mass_U(\dif u)$ be the total variation measure of $\dif u$.
We claim that
\begin{equation}\label{1 strong duality}
	L\mu(M) \leq \int_M \dif u \wedge F.
\end{equation}
First, we have from (\ref{q to 1 Holder}) and (\ref{1 strong duality before limits}) that
$$\mu(M) \leq \lim_{q \to 1} \frac{1}{q} \int_M \star |\dif u_q|^q = \lim_{p \to \infty} k_p \int_M \dif u_q \wedge F_p - \lim_{p \to \infty} \frac{k_p^p}{p} \int_M \star |F_p|^p.$$
By Lemma \ref{normalizations converge},
$$\lim_{p \to \infty} \frac{k_p^p}{p} \int_M \star |F_p|^p = \lim_{p \to \infty} \frac{k_p}{p} = \frac{0}{L} = 0,$$
and
$$\lim_{p \to \infty} k_p \int_M \dif u_q \wedge F_p = \frac{1}{L} \lim_{p \to \infty} \int_M [\dif u_q] \wedge \rho.$$
Since $[\dif u_q] \to [\dif u]$, we obtain
$$\lim_{p \to \infty} \int_M [\dif u_q] \wedge \rho = \int_M \alpha \wedge \rho = \int_M \dif u \wedge F,$$
completing the proof of (\ref{1 strong duality}).

Localizing (\ref{Anzellotti Holder inequality}) to an open set $E \subseteq M$, we bound
$$\int_E \dif u \wedge F \leq L \mu(E).$$
Since $\mu$ is a Radon measure and $M$ is compact, every Borel set can be $\mu$-approximated from without by open sets, so the same inequality holds if $E$ is merely a Borel set.
So, by Lemma \ref{measurable function is 1} with $\star f := \dif u \wedge F/(L|\dif u|)$, (\ref{1 extremality}) holds.
In particular, by Lemma \ref{1 extremality implies least gradient}, $u$ has least gradient.
\end{proof}

%%%%%%%%%%%%%%%%
\subsection{Counterexample for the Dirichlet problem}
We now observe that the analogue of Theorem \ref{existence of infinity tight forms} -- specifically, the existence of a dual least gradient function for the tight form -- fails for the Dirichlet problem.
This is particularly remarkable because the proof of the theorem of Maz\'on, Rossi, and Segura de Le\'on \cite{Mazon14}, which characterizes least gradient functions in terms of the dual $L^\infty$ problem, actually shows that every least gradient function (for the Dirichlet problem) is the dual of some tight form.

\begin{example}\label{boundaries bad}
Let
$$v(x + iy) := \arctan\left(\frac{y}{x}\right)$$
defined on the open disk $M$ bounded by the circle $(x - 2)^2 + y^2 = 1$.

We check that $v$ is $\infty$-harmonic, so $F := \dif v$ is a tight $1$-form.
To do this, it is best to work in polar coordinates, $x + iy = re^{i\theta}$.
Then $v(re^{i\theta}) = \theta$, so $\dif v = \dif \theta$.
The euclidean metric is 
$$g = \dif r^2 + r^2 \dif \theta^2,$$
so the Christoffel symbol ${\Gamma^\theta}_{\theta \theta}$ vanishes.
Then we compute 
$$\Delta_\infty v = \langle \nabla \dif \theta, \dif \theta \otimes \dif \theta\rangle = \langle \nabla \dif \theta, \partial_\theta \otimes \partial_\theta \rangle r^{-4} = r^{-4} {\Gamma^\theta}_{\theta \theta} = 0.$$
Similarly, we compute $|\dif \theta| = r^{-1}$, which only attains its maximum at the boundary point $(x, y) = (1, 0)$.
In particular, $\|\dif v\|_{C^0} = 1$.
So if $u$ was a conjugate $1$-harmonic to $F$, in the sense that $\dif u \wedge F = |\dif u|$, then $\dif u$ would be identically zero away from $(1, 0)$, which is absurd.

A more geometric way to visualize this phenomenon is to notice that the streamlines of $v$ -- that is, the integral curves of $\nabla v$ -- are the circles centered on $0$.
If $u$ was a conjugate $1$-harmonic, then the level sets of $u$ would correspond to the streamlines of $v$.
However, since $u$ has least gradient, its level sets are straight lines.
\end{example}

\section{Calibrations of laminations}\label{comass sec}
\subsection{Measurable calibrations}\label{L infinity calibrations}
Let $M$ be an oriented Riemannian manifold of dimension $d \geq 2$, and let $F \in L^\infty(M, \Omega^{d - 1}_{\rm cl})$ be a closed form.
By Proposition \ref{integration is welldefined}, for every hypersurface $N \subset M$, $\int_N F$ is well-defined.
Similarly, if $T$ is a closed $1$-current of locally finite mass, then we can locally write $T = \dif u$ for some function $u \in BV$, and so the Anzellotti wedge product $T \wedge F$, constructed in Proposition \ref{Anzellotti wedge product exists}, is well-defined and has locally finite mass.
Thus, the following definition makes sense:

\begin{definition}
Let $F$ be a measurable calibration $d - 1$-form. Then:
\begin{enumerate}
\item A hypersurface $N \subset M$ is \dfn{$F$-calibrated} if 
$$\int_N F = \vol(N).$$
\item A lamination $\lambda$ is \dfn{$F$-calibrated} if every leaf of $\lambda$ is $F$-calibrated.
\item A closed $1$-current $T$ of locally finite mass is \dfn{$F$-calibrated} if
$$\int_M T \wedge F = \Mass(T).$$
\end{enumerate}
\end{definition}

Let $\lambda$ be a measured oriented Lipschitz lamination.
Then $\lambda$ can be identified with its Ruelle-Sullivan current $T_\lambda$, which has locally finite mass.
By $\Mass(\lambda)$ we mean $\Mass(T_\lambda)$.
Since $T_\lambda$ is closed, it has a cohomology class $[T_\lambda] \in H^1(M, \RR)$.
Therefore $\lambda$ has a homology class
$$[\lambda] := \PD([T_\lambda]) \in H_{d - 1}(M, \RR).$$

Let $(\chi_\alpha)$ be a locally finite partition of unity subordinate to a laminar atlas $(U_\alpha)$ for the measured oriented lamination $\lambda$.
Let $K_\alpha$ denote the local leaf space of $U_\alpha$, and $\mu_\alpha$ the transverse measure on $K_\alpha$.
If $\sigma_{\alpha, k}$ denotes the leaf in $U_\alpha$ corresponding to the real number $k \in K_\alpha$, then the definition of the Ruelle-Sullivan current unpacks as
\begin{equation}\label{coarea formula on laminations}
\int_M T_\lambda \wedge F = \sum_\alpha \int_{K_\alpha} \int_{\sigma_{\alpha, k}} \chi_\alpha F \dif \mu_\alpha(k).
\end{equation}
Since $T_\lambda$ and $F$ are closed, if $M$ is closed, then the left-hand side of (\ref{coarea formula on laminations}) is a homological invariant:
\begin{equation}\label{Ruelle Sullivan homology}
\int_M T_\lambda \wedge F = \langle [F], [\lambda]\rangle.
\end{equation}
We now show that $T_\lambda$ is $F$-calibrated iff $\lambda$ is:

\begin{proposition}\label{calibration condition}
Let $F$ be a calibration.
Let $T_\lambda$ be the Ruelle-Sullivan current of a measured oriented lamination $\lambda$.
Then the following are equivalent:
\begin{enumerate}
\item $T_\lambda$ is $F$-calibrated.
\item $\lambda$ is $F$-calibrated.
\end{enumerate}
\end{proposition}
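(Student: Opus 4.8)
The plan is to disintegrate both conditions over the local leaf spaces by means of the coarea formula (\ref{coarea formula on laminations}), reduce each to a leafwise comparison of integrals, and then bridge the gap between ``calibrated for $\mu_\alpha$-almost every leaf'' and ``calibrated for every leaf'' using that $\dif F = 0$.

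First, fix a laminar atlas $\{(\Psi_\alpha, K_\alpha)\}$ for $\lambda$ with subordinate partition of unity $\{\chi_\alpha\}$, write $\sigma_{\alpha,k} = \Psi_\alpha(\{k\} \times J)$ for the leaves, and set
\[
a_\alpha(k) := \int_{\sigma_{\alpha,k}} \chi_\alpha F, \qquad b_\alpha(k) := \int_{\sigma_{\alpha,k}} \chi_\alpha \, \dif \vol .
\]
Since $\Comass(F) = 1$ and $\int_N F$ is well defined for hypersurfaces $N$ (Proposition \ref{integration is welldefined}), $F$ restricts to at most the Riemannian volume form on each leaf, so $a_\alpha \leq b_\alpha$ on $K_\alpha$, with equality at a given $k$ if and only if $F$ restricts to exactly $\vol_{\sigma_{\alpha,k}}$ on $\{\chi_\alpha > 0\} \cap \sigma_{\alpha,k}$. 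By (\ref{coarea formula on laminations}) one has $\int_M T_\lambda \wedge F = \sum_\alpha \int_{K_\alpha} a_\alpha \, \dif\mu_\alpha$, and the same computation with $F$ replaced by the leafwise volume form gives $\Mass(T_\lambda) = \sum_\alpha \int_{K_\alpha} b_\alpha \, \dif\mu_\alpha$ (recorded in \cite{BackusCML}; the nontrivial direction tests $T_\lambda$ against a continuous unit-comass extension of the conormal field of $\lambda$). Thus the proposition reduces to the claim that $\sum_\alpha \int_{K_\alpha}(b_\alpha - a_\alpha)\,\dif\mu_\alpha = 0$ holds if and only if $a_\alpha = b_\alpha$ everywhere on $K_\alpha$, for every $\alpha$.

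The implication $(2) \Rightarrow (1)$ is then immediate: if every leaf is $F$-calibrated then $a_\alpha \equiv b_\alpha$ and the two sums agree. For $(1) \Rightarrow (2)$, since each integrand $b_\alpha - a_\alpha \geq 0$, vanishing of the sum forces $a_\alpha = b_\alpha$ for $\mu_\alpha$-almost every $k$ (after first localizing to relatively compact open subsets of $M$ if $\Mass(T_\lambda)$ is infinite). The main obstacle is upgrading this to \emph{every} $k \in K_\alpha$, and this is exactly where closedness of $F$ is used. Writing $\Psi_\alpha^* F = f \, \dif y_1 \wedge \cdots \wedge \dif y_{d-1} + \omega$ on $I \times J$, with $\omega$ containing $\dif t$, the equation $\dif F = 0$ makes $\partial_t f$ a $y$-divergence of the $L^\infty$ coefficients of $\omega$; applying Stokes' theorem for $L^\infty$ forms to $\chi_\alpha \Psi_\alpha^* F$ on the slab $[k_1, k_2] \times J$ (whose exterior derivative is $\dif\chi_\alpha \wedge \Psi_\alpha^* F \in L^\infty$, and for which the contributions from $[k_1,k_2] \times \partial J$ vanish since $\chi_\alpha$ is compactly supported in $J$) yields
\[
|a_\alpha(k_2) - a_\alpha(k_1)| \;\leq\; \|\dif\chi_\alpha\|_{L^\infty} \, \|F\|_{L^\infty} \, |J| \, |k_2 - k_1|,
\]
so $a_\alpha$ is Lipschitz, hence continuous. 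On the other hand $b_\alpha$, a weighted $(d-1)$-dimensional area of $\sigma_{\alpha,k}$, is lower semicontinuous in $k$ because the leaves vary continuously in the $C^0$ topology ($\Psi_\alpha$ is continuous). Since $\supp \mu_\alpha = K_\alpha$, the full-measure set $\{a_\alpha = b_\alpha\}$ is dense in $K_\alpha$; for any $k_0 \in K_\alpha$, choosing $k_n \to k_0$ inside this set gives $b_\alpha(k_0) \leq \liminf_n b_\alpha(k_n) = \liminf_n a_\alpha(k_n) = a_\alpha(k_0) \leq b_\alpha(k_0)$, so $a_\alpha(k_0) = b_\alpha(k_0)$. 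Summing over the $\alpha$ meeting a leaf $\sigma$ and using $\sum_\alpha \chi_\alpha = 1$ gives $\int_\sigma F = \vol(\sigma)$, so $\lambda$ is $F$-calibrated. The point a naive argument would miss is precisely the continuity of $a_\alpha$, which fails for a general $L^\infty$ form but holds because $\dif F = 0$; without it, $(1)$ only gives that $F$ calibrates a set of leaves of full transverse measure, which is strictly weaker than the definition of an $F$-calibrated lamination.
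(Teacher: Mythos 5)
Your proof is correct and follows the same overall architecture as the paper's: disintegrate both sides over the local leaf spaces via (\ref{coarea formula on laminations}), deduce from the nonnegativity of the leafwise defect that $\mu_\alpha$-almost every plaque is calibrated, and then use closedness of $F$ together with $\supp \mu_\alpha = K_\alpha$ to upgrade from almost every leaf to every leaf. The one place where you genuinely diverge is the implementation of that last upgrade. The paper invokes the regularized Poincar\'e lemma (Proposition \ref{Hodge theorem}) to produce a H\"older continuous primitive $A$ with $F = \dif A$ near the plaque, and gets continuity of $k \mapsto \int_{\sigma_{\alpha,k}} F$ from Stokes' theorem applied to $\partial \sigma_{\alpha,k}$; it then asserts continuity of $k \mapsto \Mass(\sigma_{\alpha,k})$. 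You instead apply Stokes (in the Anzellotti sense) to $\chi_\alpha \Psi_\alpha^* F$ on the transverse slab $[k_1,k_2] \times J$, obtaining a Lipschitz bound on $a_\alpha$ directly, and you only need \emph{lower} semicontinuity of the weighted area $b_\alpha$ rather than continuity. Your route buys two things: it avoids the Costabel--McIntosh regularized Poincar\'e lemma entirely, relying only on the trace/Gauss--Green machinery already in Appendix \ref{GMT appendix}; and it weakens the regularity demanded of $k \mapsto \vol(\sigma_{\alpha,k})$ from continuity (which the paper asserts without proof) to lower semicontinuity, which is the more robust property under weak convergence of the plaques. The paper's route, in exchange, gives a statement about the unweighted plaques $\sigma_{\alpha,k}$ directly, whereas you must at the end patch together the sets $\{\chi_\alpha > 0\}$ using $\sum_\alpha \chi_\alpha = 1$ to see that $F$ restricts to the volume form on an entire leaf; you do carry this out correctly.
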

\begin{proof}
First suppose that $T_\lambda$ is $F$-calibrated.
Let $(\chi_\alpha)$ be a locally finite partition of unity subordinate to an open cover $(U_\alpha)$ of flow boxes for $\lambda$, let $(K_\alpha)$ be the local leaf spaces, and let $(\mu_\alpha)$ be the transverse measure.
After refining $(U_\alpha)$ we may assume that $U_\alpha$ is a ball which satisfies the hypotheses of the regularized Poincar\'e lemma (Proposition \ref{Hodge theorem}). After shrinking $U_\alpha$ we may assume that $\chi_\alpha > 0$ on $U_\alpha$.
Then for leaves $\sigma_{\alpha,k}$, we rewrite (\ref{coarea formula on laminations}) as 
$$\Mass(\lambda) = \int_M T_\lambda \wedge F = \sum_\alpha \int_{K_\alpha} \int_{\sigma_{\alpha,k}} \chi_\alpha F \dif \mu_\alpha(k).$$
Let $\dif S_{\alpha,k}$ be the surface measure on $\sigma_{\alpha,k}$.
Then
$$\int_M \chi_\alpha \star |T_\lambda| = \int_{K_\alpha} \int_{\sigma_{\alpha,k}} \chi_\alpha \dif S_{\alpha,k} \dif \mu_\alpha(k),$$
so summing in $\alpha$, we obtain 
\begin{equation}\label{calibration condition contr}
\sum_\alpha \int_{K_\alpha} \int_{\sigma_{\alpha,k}} \chi_\alpha F \dif \mu_\alpha(k) = \Mass(\lambda) = \sum_\alpha \int_{K_\alpha} \int_{\sigma_{\alpha,k}} \chi_\alpha \dif S_{\alpha,k} \dif \mu_\alpha(k).
\end{equation}

We claim that $\lambda$ is \dfn{almost calibrated} in the sense that for every $\alpha$ and $\mu_\alpha$-almost every $k$, $\sigma_{\alpha, k}$ is calibrated.
If this is not true, then we may select $\beta$ and $K \subseteq K_\beta$ with $\mu_\beta(K) > 0$, such that for every $k \in K$, $\int_{\sigma_{\beta, k}} F < \vol(\sigma_{\beta, k})$.
Since $0 < \chi_\beta \leq 1$ and $F/\dif S_{\beta, k} \leq 1$ on $\sigma_{\beta, k}$, this is only possible if 
$$\int_{\sigma_{\beta, k}} \chi_\beta F < \int_{\sigma_{\beta, k}} \chi_\beta \dif S_{\beta, k}.$$
Integrating over $K$, and using the fact that in general we have $\int_{\sigma_{\alpha, k}} \chi_\alpha F \leq \int_{\sigma_{\alpha, k}} \chi_\alpha \dif S_{\alpha, k}$, we conclude that 
$$\sum_\alpha \int_{K_\alpha} \int_{\sigma_{\alpha, k}} \chi_\alpha F \dif \mu_\alpha(k) < \sum_\alpha \int_{K_\alpha} \int_{\sigma_{\alpha, k}} \chi_\alpha \dif S_{\alpha, k} \dif \mu_\alpha(k)$$
which contradicts (\ref{calibration condition contr}).

To upgrade $\lambda$ from an almost calibrated lamination to a calibrated lamination, we first, given $\sigma_{\alpha, k}$, choose $k_j$ such that $\sigma_{\alpha, k_j}$ is calibrated and $k_j \to k$.
By Proposition \ref{Hodge theorem}, we can find a continuous $d - 2$-form $A$ defined near $\sigma_{\alpha, k}$ with $F = \dif A$.
This justifies the following application of Stokes' theorem: 
$$\int_{\sigma_{\alpha, k}} F = \int_{\partial \sigma_{\alpha, k}} A.$$
Since $k_j \to k$, and $A$ is continuous,
\begin{align*}
\Mass(\sigma_{\alpha, k}) &= \lim_{j \to \infty} \Mass(\sigma_{\alpha, k_j}) = \lim_{j \to \infty} \int_{\sigma_{\alpha, k_j}} F = \lim_{j \to \infty} \int_{\partial \sigma_{\alpha, k_j}} A = \int_{\partial \sigma_{\alpha, k}} A = \int_{\sigma_{\alpha, k}} F.
\end{align*}

To establish the converse, suppose that $\lambda$ is $F$-calibrated, and let notation be as above.
Since $\lambda$ is $F$-calibrated, for every $\alpha$ and every $k$, the area form on $\sigma_{\alpha, k}$ is $F$. Therefore
\begin{align*}
\int_M T_\lambda \wedge F &= \sum_\alpha \int_{K_\alpha} \int_{\sigma_{\alpha, k}} \chi_\alpha F \dif \mu_\alpha(k) = \Mass(T_\lambda). \qedhere
\end{align*}
\end{proof}

Suppose that $M$ is closed.
A current $T$ is \dfn{homologically minimizing} if it minimizes its mass in its cohomology class.
In particular, if $T$ is calibrated, then $T$ is homologically minimizing.
A measured oriented lamination $\lambda$ is \dfn{homologically minimizing} if its Ruelle-Sullivan current $T_\lambda$ is.

\begin{proposition}\label{properties of calibrated laminations}
Suppose that $M$ is closed.
Let $F$ be a calibration, and let $\lambda$ be a measured oriented $F$-calibrated lamination.
Then:
\begin{enumerate}
\item $\lambda$ is minimal and homologically minimizing.
\item If $G$ is a calibration and cohomologous to $F$, then $\lambda$ is $G$-calibrated.
\end{enumerate}
\end{proposition}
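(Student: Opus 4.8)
The plan is to prove each part by reduction to the corresponding fact about the Ruelle–Sullivan current $T_\lambda$, using Proposition \ref{calibration condition} as the bridge. For part (1), I would first observe that since $\lambda$ is $F$-calibrated, Proposition \ref{calibration condition} gives that $T_\lambda$ is $F$-calibrated, i.e.\ $\int_M T_\lambda \wedge F = \Mass(T_\lambda)$. The minimality of $\lambda$ (every leaf has zero mean curvature) should follow leaf-by-leaf from the classical calibration argument already recorded in the ``Calibrated geometry'' subsection: an $F$-calibrated submanifold is area-minimizing, hence in particular stationary, hence minimal; this applies to each leaf $\sigma_{\alpha,k}$ since each is a $C^1$ complete hypersurface in its flow box and $F$ restricts to its volume form. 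For the homologically minimizing claim, I would run the calibration inequality at the level of currents: if $T'$ is any closed $1$-current cohomologous to $T_\lambda$, write $T' - T_\lambda = \dif B$ in the appropriate sense on the closed manifold $M$, and use that $F$ is closed together with $\Comass(F) = 1$ to get
$$
\Mass(T_\lambda) = \int_M T_\lambda \wedge F = \int_M T' \wedge F \leq \Comass(F)\,\Mass(T') = \Mass(T'),
$$
where the middle equality is Stokes / the homological invariance in (\ref{Ruelle Sullivan homology}). Hence $T_\lambda$ is homologically minimizing, and by definition so is $\lambda$.

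For part (2), suppose $G$ is a calibration cohomologous to $F$. The key point is that $G$ must then \emph{also} calibrate every leaf of $\lambda$. I would argue: since $[G] = [F]$, equation (\ref{Ruelle Sullivan homology}) gives $\int_M T_\lambda \wedge G = \langle [G],[\lambda]\rangle = \langle [F],[\lambda]\rangle = \int_M T_\lambda \wedge F = \Mass(T_\lambda)$, where the last equality is part (1). Thus $T_\lambda$ is $G$-calibrated, and Proposition \ref{calibration condition} (in the direction ``$T_\lambda$ $G$-calibrated $\Rightarrow$ $\lambda$ $G$-calibrated'') yields that $\lambda$ is $G$-calibrated. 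One has to be slightly careful that $G \in L^\infty$ so the wedge product $T_\lambda \wedge G$ and the integrals $\int_N G$ over leaves are well-defined in the Anzellotti sense; this is exactly the setup of \S\ref{L infinity calibrations}, so it is covered.

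I expect the main obstacle to be the homological-invariance step $\int_M T_\lambda \wedge F = \int_M T' \wedge F$ for currents that are only of locally finite mass rather than smooth forms. On a closed $M$ this should follow from approximating the closed $1$-currents by closed $1$-forms (as noted in \S\ref{homological integration}) together with the weak continuity of the Anzellotti pairing against the fixed $L^\infty$ closed form $F$, but the precise justification requires invoking the compensated-compactness machinery from Appendix \ref{GMT appendix}; I would cite the relevant Anzellotti-pairing Stokes-type statement there rather than reprove it. Everything else — leafwise minimality, the definitional bookkeeping, and the cohomological computation in part (2) — is routine once Proposition \ref{calibration condition} is in hand.
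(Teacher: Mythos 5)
Your proposal is correct and follows essentially the same route as the paper: leafwise minimality from the classical calibration inequality, Proposition \ref{calibration condition} to pass between $\lambda$ and $T_\lambda$, the homological invariance (\ref{Ruelle Sullivan homology}) to transfer the calibration condition from $F$ to the cohomologous $G$, and the standard calibration argument for homological mass-minimization. The only difference is that you spell out the current-level calibration inequality and the approximation justification that the paper leaves implicit in the phrase ``a calibration argument.''
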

\begin{proof}
Every leaf of $\lambda$ is $F$-calibrated, hence minimal, so $\lambda$ is also minimal.
Since $\lambda$ is $F$-calibrated, so is $T_\lambda$ by Proposition \ref{calibration condition}, but then by (\ref{Ruelle Sullivan homology}), it follows that $T_\lambda$ is $G$-calibrated, and hence $\lambda$ is $G$-calibrated.
Moreover, since $T_\lambda$ is $F$-calibrated, a calibration argument shows that $\lambda$ is homologically minimizing.
\end{proof}

Let $F, G$ be cohomologous calibrations on a closed manifold $M$, and let $\lambda$ be a $F$-calibrated lamination.
In general, there is no reason to believe that $\lambda$ is $G$-calibrated, since there could exist a leaf $N$ which accumulates on a closed leaf countably many times.
Then $N$ would be minimal, but $N$ does not have a well-defined homology class.
However, if $\lambda$ is measured oriented, then by Proposition \ref{properties of calibrated laminations}, the following definition is independent of the choice of $F$.

\begin{definition}
Suppose that $M$ is closed and $d \leq 7$.
Let $\rho \in H^{d - 1}(M, \RR)$ be nonzero, and let $F$ be a tight representative of $\rho$.
We say that a measured oriented lamination $\kappa$ is \dfn{dual} to $\rho$ if $\kappa$ is $F/\|F\|_{L^\infty}$-calibrated.
\end{definition}

We are now ready to prove Corollary \ref{existence of dual laminations}:

\begin{corollary}\label{existence of dual laminations proof}
Suppose that $M$ is closed and $d \leq 7$.
Let $\rho \in H^{d - 1}(M, \RR)$ be nonzero.
Then there exists a measured oriented lamination $\kappa$ which is dual to $\rho$.
\end{corollary}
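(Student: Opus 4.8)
The plan is to assemble the corollary directly from Theorem \ref{existence of infinity tight forms}, Theorem \ref{1 harmonic is MOML}, and Proposition \ref{calibration condition}. First I would apply Theorem \ref{existence of infinity tight forms} to the nonzero class $\rho$, obtaining a tight form $F$ representing $\rho$ together with a nonconstant $\Gamma$-equivariant function $u$ of least gradient on $\tilde M$ such that $\dif u$ descends to $M$ and the extremality relation $\dif u \wedge F = \|F\|_{L^\infty} \star |\dif u|$ holds as an identity of Radon measures on $M$. Since $F$ has best comass and closed $d-1$-forms satisfy $\Comass(F) = \|F\|_{L^\infty}$ (as shown in \S\ref{prelims}), this is exactly $\dif u \wedge F = \Comass(\rho) \star |\dif u|$, i.e.\ equation (\ref{1 extremality}). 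Since $2 \leq d \leq 7$ and $u$ has least gradient, Theorem \ref{1 harmonic is MOML} then produces a measured oriented minimal lamination $\kappa := \lambda_u$ on $M$, minimizing its mass in its homology class, whose Ruelle--Sullivan current is $T_\kappa = \dif u$. Because $u$ is nonconstant, $\dif u \neq 0$, so $\kappa$ is nonempty and is a bona fide measured oriented lamination.

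It remains to verify that $\kappa$ is dual to $\rho$, which by definition means that $\kappa$ is $G$-calibrated for $G := F/\|F\|_{L^\infty}$. I would first note that $G$ is genuinely a calibration: it is closed, and $\Comass(G) = \Comass(F)/\|F\|_{L^\infty} = 1$. By Proposition \ref{calibration condition}, $\kappa$ is $G$-calibrated if and only if its Ruelle--Sullivan current $T_\kappa$ is $G$-calibrated, so it suffices to check that $\int_M T_\kappa \wedge G = \Mass(T_\kappa)$.

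This last identity follows immediately from the extremality relation: dividing $\dif u \wedge F = \|F\|_{L^\infty} \star |\dif u|$ by $\|F\|_{L^\infty}$ gives $\dif u \wedge G = \star |\dif u|$ as Radon measures on the closed manifold $M$, and integrating over $M$ yields
\[
\int_M T_\kappa \wedge G = \int_M \dif u \wedge G = \int_M \star |\dif u| = \Mass(\dif u) = \Mass(T_\kappa).
\]
Hence $T_\kappa$, and therefore $\kappa$, is $G$-calibrated, so $\kappa$ is dual to $\rho$, proving the corollary.

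There is no serious obstacle here — the argument is bookkeeping built on the already-proven Theorems \ref{existence of infinity tight forms} and \ref{1 harmonic is MOML} and Proposition \ref{calibration condition}. The only points requiring care are to track the normalization constant, confirming $\|F\|_{L^\infty} = \Comass(\rho)$ so that the measure identity of Theorem \ref{existence of infinity tight forms} matches (\ref{1 extremality}) after scaling by $\|F\|_{L^\infty}^{-1}$, and to observe that the wedge product $\dif u \wedge F$ and the integral $\int_M \dif u \wedge G$ descend unambiguously from $\tilde M$ to $M$ because $\dif u$ does. If one wished, one could also remark that duality is then independent of the chosen tight representative by Proposition \ref{properties of calibrated laminations}, but that is not needed for the existence statement.
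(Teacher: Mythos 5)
Your proposal is correct and follows essentially the same route as the paper: obtain the tight form $F$ and its $1$-harmonic conjugate $u$ from Theorem \ref{existence of infinity tight forms} (the paper cites Theorem \ref{existence 1} directly), normalize to $G := F/\|F\|_{L^\infty}$, produce $\kappa$ with $T_\kappa = \dif u$ via Theorem \ref{1 harmonic is MOML}, and verify $\int_M T_\kappa \wedge G = \Mass(T_\kappa)$ before invoking Proposition \ref{calibration condition}. Your extra remarks on the normalization $\|F\|_{L^\infty} = \Comass(\rho)$ and the nonconstancy of $u$ are correct but not a departure from the paper's argument.
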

\begin{proof}
By Theorem \ref{existence 1}, there exists a tight representative $F$ of $\rho$ and a $1$-harmonic conjugate $u$ of $F$.
Let $G := F/\|F\|_{L^\infty}$, and observe that $G$ is a calibration and $u$ is a $1$-harmonic conjugate of $G$.
By Theorem \ref{1 harmonic is MOML}, there exists a measured oriented lamination $\kappa$ such that $\dif u = T_\kappa$.
Therefore 
$$\Mass(\kappa) = \int_M \star |T_\kappa| = \int_M \star |\dif u| = \int_M \dif u \wedge G = \int_M T_\kappa \wedge G.$$
So $T_\kappa$ is $G$-calibrated, and the result follows from Proposition \ref{calibration condition}.
\end{proof}

%%%%%%%%%%%%%%%%%%%
\subsection{Proof of Theorem \ref{lams are calibrated}}\label{proof of Theorem B}
Let $M$ be a closed oriented Riemannian manifold of dimension $2 \leq d \leq 7$, and let $\rho \in H^{d - 1}(M, \RR)$ be a nonzero class.
To ease notation, we normalize
$$\Comass(\rho) = 1.$$
Let $\kappa$ be a measured oriented lamination, and choose a tight form $F$ to represent $\rho$.

\begin{proposition}
Let $\lambda$ range over measured oriented laminations.
If $\kappa$ is dual to $\rho$, then
	\begin{equation}\label{L equals K formula}
	\sup_\lambda \frac{\langle \rho, [\lambda]\rangle}{\Mass(\lambda)} = \frac{\langle \rho, [\kappa]\rangle}{\Mass(\kappa)} = 1.
	\end{equation}
\end{proposition}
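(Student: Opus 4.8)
The plan is to establish the chain of (in)equalities in (\ref{L equals K formula}) by combining a general upper bound valid for \emph{all} measured oriented laminations $\lambda$ with a matching lower bound realized by $\kappa$. First I would prove that for every measured oriented lamination $\lambda$ one has $\langle \rho, [\lambda]\rangle \leq \Mass(\lambda)$. This is a pure calibration estimate: by (\ref{Ruelle Sullivan homology}), $\langle \rho, [\lambda]\rangle = \langle [F], [\lambda]\rangle = \int_M T_\lambda \wedge F$, and since $F$ is tight with $\Comass(F) = \Comass(\rho) = 1$, the definition of mass (or the Anzellotti Hölder inequality) gives $\int_M T_\lambda \wedge F \leq \Comass(F)\,\Mass(T_\lambda) = \Mass(\lambda)$. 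Taking the supremum over $\lambda$ yields $\sup_\lambda \langle \rho, [\lambda]\rangle/\Mass(\lambda) \leq 1$.

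Next I would show the supremum is attained, and equals $1$, at $\kappa$. Since $\kappa$ is dual to $\rho$, by definition $\kappa$ is $F/\|F\|_{L^\infty} = F$-calibrated (recall $\|F\|_{L^\infty} = \Comass(\rho) = 1$ since tight forms have best comass and closed $(d-1)$-forms satisfy $\Comass(F) = \|F\|_{L^\infty}$). By Proposition \ref{calibration condition}, the Ruelle-Sullivan current $T_\kappa$ is then $F$-calibrated, i.e. $\int_M T_\kappa \wedge F = \Mass(T_\kappa) = \Mass(\kappa)$. Combining with (\ref{Ruelle Sullivan homology}) again, $\langle \rho, [\kappa]\rangle = \int_M T_\kappa \wedge F = \Mass(\kappa)$, hence $\langle \rho, [\kappa]\rangle/\Mass(\kappa) = 1$ — this requires noting $\Mass(\kappa) \neq 0$, which holds because $\kappa$ is a genuine (nonempty) lamination with a transverse measure of full support, so $T_\kappa \neq 0$.

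Putting the two halves together: $1 = \langle \rho,[\kappa]\rangle/\Mass(\kappa) \leq \sup_\lambda \langle \rho,[\lambda]\rangle/\Mass(\lambda) \leq 1$, forcing equality throughout, which is exactly (\ref{L equals K formula}).

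The only genuinely delicate point is the interchangeability of two facts about $F$: that $\Comass(F) = \|F\|_{L^\infty} = 1$ and that $F$ is an honest calibration (so that Proposition \ref{calibration condition} and the notion of $F$-calibrated laminations apply). The former was recorded in the preliminaries for closed $(d-1)$-forms, and the latter follows since, by Proposition \ref{existence infinity}, a tight representative of a class with $\Comass(\rho)=1$ has $\|F\|_{L^\infty}=1$ and $\dif F = 0$. I expect no other obstacle; everything else is a direct application of (\ref{Ruelle Sullivan homology}), Proposition \ref{calibration condition}, and the definition of mass.
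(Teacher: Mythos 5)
Your proof is correct and follows essentially the same route as the paper: the upper bound $\langle \rho,[\lambda]\rangle \le \Mass(\lambda)$ via (\ref{Ruelle Sullivan homology}) and the Anzellotti H\"older inequality (the paper makes this precise by writing $T_\lambda = \dif v$ with $v \in BV$ so that the pairing with the merely $L^\infty$ form $F$ is the Anzellotti one), and the matching equality at $\kappa$ via Proposition \ref{calibration condition}. Your extra remarks on the normalization $\Comass(F)=\|F\|_{L^\infty}=\Comass(\rho)=1$ and on $\Mass(\kappa)\neq 0$ are consistent with the paper's standing conventions.
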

\begin{proof}
Let
$$K :=  \sup_\lambda \frac{\langle \rho, [\lambda]\rangle}{\Mass(\lambda)}.$$
Since $\kappa$ is $F$-calibrated, we obtain from (\ref{Ruelle Sullivan homology}) that
$$\langle \rho, [\kappa]\rangle = \int_M T_\kappa \wedge F = \Mass(\kappa).$$
So it is enough to show that $K \leq 1$.

Let $\lambda$ be a measured oriented lamination.
We can write $T_\lambda = \dif v$ for some $v \in BV(M, \RR)$; then, by (\ref{Ruelle Sullivan homology}) and (\ref{Anzellotti Holder inequality}),
$$\langle \rho, [\lambda]\rangle = \int_M T_\lambda \wedge F = \int_M \dif v \wedge F \leq \Mass(\dif v) = \Mass(\lambda).$$
Taking the supremum in $\lambda$, we deduce $K \leq 1$.
\end{proof}

\begin{proposition}\label{calibrated means measured stretch}
Suppose that
$$\langle \rho, [\kappa]\rangle = \Mass(\kappa).$$
Then there exists a $\pi_1(M)$-equivariant function $u$ of least gradient, such that $T_\kappa = \dif u$ and
\begin{equation}\label{duality for least gradient}
\langle \rho, \PD([\dif u])\rangle = \Mass(\dif u).
\end{equation}
\end{proposition}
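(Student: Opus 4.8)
The plan is to produce $u$ by lifting the Ruelle--Sullivan current $T_\kappa$ to the universal cover and integrating it, then to deduce (\ref{duality for least gradient}) formally and obtain the least gradient property from the same calibration argument used in Lemma \ref{1 extremality implies least gradient}.

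First I would construct $u$. The current $T_\kappa$ is a closed $1$-current of finite mass on the closed manifold $M$, so its pullback $\pi^* T_\kappa$ along the covering $\pi: \tilde M \to M$ is a closed, $\Gamma$-invariant $1$-current of locally finite mass on the simply connected manifold $\tilde M$. By the Poincar\'e lemma for currents of locally finite mass --- solve $\dif u = \pi^* T_\kappa$ in coordinate balls by mollification, producing local $BV$ primitives, and glue them using simple connectivity of $\tilde M$ --- there is $u \in BV_\loc(\tilde M)$ with $\dif u = \pi^* T_\kappa$. For $\gamma \in \Gamma$ the function $u\circ\gamma - u$ has zero differential, hence equals a constant $\alpha(\gamma)$, and $\gamma \mapsto \alpha(\gamma)$ is a homomorphism $\Gamma \to \RR$; thus $u$ is $\alpha$-equivariant, $\dif u$ descends to a closed $1$-current on $M$, and since pullback along a covering map is injective on currents this descended current is exactly $T_\kappa$. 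Consequently $T_\kappa = \dif u$, $\Mass(\dif u) = \Mass(T_\kappa) = \Mass(\kappa)$, and $\PD([\dif u]) = \PD([T_\kappa]) = [\kappa]$; and since $\kappa$ carries a transverse measure of full support we have $T_\kappa \neq 0$, so $u$ is nonconstant.

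Granting this $u$, the identity (\ref{duality for least gradient}) is immediate from the hypothesis:
\[
\langle \rho, \PD([\dif u])\rangle = \langle \rho, [\kappa]\rangle = \Mass(\kappa) = \Mass(\dif u).
\]
For the least gradient property I would repeat the calibration argument of Lemma \ref{1 extremality implies least gradient} with the chosen tight form $F$. Since $F$ has best comass and we have normalized $\Comass(\rho) = 1$, we have $\Comass(F) = 1$; moreover by (\ref{Ruelle Sullivan homology}) and the hypothesis, $\int_M \dif u \wedge F = \int_M T_\kappa \wedge F = \langle\rho,[\kappa]\rangle = \Mass(\kappa) = \Mass(\dif u)$. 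Now let $v$ be any $\pi_1(M)$-invariant function (the inequality being trivial when $\dif v$ has infinite mass, so assume $\dif v$ has finite mass); then $\dif v$ descends to an exact $1$-current on $M$, so by Stokes' theorem $\int_M \dif u \wedge F = \int_M (\dif u + \dif v)\wedge F$, while the Anzellotti--H\"older inequality (\ref{Anzellotti Holder inequality}) bounds the right-hand side by $\Comass(F)\Mass(\dif u + \dif v) = \Mass(\dif u + \dif v)$. Hence $\Mass(\dif u) \leq \Mass(\dif u + \dif v)$, i.e. $u$ has least gradient.

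The one genuinely technical point is the construction in the second paragraph: exactness of the closed, locally-finite-mass current $\pi^* T_\kappa$ with a $BV_\loc$ primitive, together with the fact that the primitive descends to $T_\kappa$ itself rather than merely to a cohomologous current. This is a Poincar\'e lemma for currents rather than smooth forms, so it needs the mollification-and-patch argument above (or a reference to the de Rham theory of currents); everything after that is formal bookkeeping together with the calibration inequality already established in the proof of Lemma \ref{1 extremality implies least gradient}. It is worth noting that $F$ being only $L^\infty$, not continuous, causes no difficulty, since $F$ is closed and the relevant object is the Anzellotti wedge product $\dif u \wedge F$ of Proposition \ref{Anzellotti wedge product exists}.
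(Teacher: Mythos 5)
Your proposal is correct and follows essentially the same route as the paper: write $T_\kappa = \dif u$ for an equivariant $u \in BV_\loc(\tilde M)$, combine the hypothesis with the homological invariance (\ref{Ruelle Sullivan homology}) of $\int_M T_\kappa \wedge F$ to get $\int_M \dif u \wedge F = \Mass(\dif u)$, and then run the calibration argument. The only divergence is at the last step: the paper first upgrades the integral identity to the pointwise measure identity (\ref{1 extremality}) via Lemma \ref{measurable function is 1} (thereby exhibiting $u$ as a $1$-harmonic conjugate of $F$) and then cites Lemma \ref{1 extremality implies least gradient}, whereas you apply the calibration inequality directly from the global identity; this shortcut is legitimate, since the proof of Lemma \ref{1 extremality implies least gradient} only ever uses (\ref{1 extremality}) in integrated form, and the proposition as stated does not require the pointwise conclusion. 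Your more detailed construction of the $BV_\loc$ primitive on $\tilde M$ fills in a step the paper leaves implicit.
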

\begin{proof}
We write $T_\kappa = \dif u$ for some $u \in BV(\tilde M, \RR)$, so that, by (\ref{Ruelle Sullivan homology}) and the fact that $\kappa$ attains the supremum,
$$\int_M \star |\dif u| = \Mass(\kappa) = \langle \rho, [\kappa]\rangle = \int_M T_\kappa \wedge F = \int_M \dif u \wedge F.$$
By Lemma \ref{measurable function is 1} applied with $\mu(E) := \int_E \star |\dif u|$ for every Borel set $E$, and
$$f := \star \frac{\dif u \wedge F}{|\dif u| \|F\|_{L^\infty}},$$
we see that that (\ref{1 extremality}) holds, so that $u$ is a $1$-harmonic conjugate of $F$.
By Lemma \ref{1 extremality implies least gradient}, $u$ has least gradient.
The duality (\ref{duality for least gradient}) follows from the facts that $\dif u$ and $\kappa$ are Poincar\'e dual and $\Mass(\dif u) = \Mass(\kappa)$.
\end{proof}

\begin{proposition}\label{least gradient implies dual}
Suppose that there exists a $\pi_1(M)$-equivariant function $u$ of least gradient, such that $T_\kappa = \dif u$ and (\ref{duality for least gradient}) holds.
Then $\kappa$ is dual to $\rho$.
\end{proposition}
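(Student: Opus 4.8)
The plan is to run the calibration argument in reverse: given that $u$ has least gradient and satisfies the duality $\langle \rho, \PD([\dif u])\rangle = \Mass(\dif u)$, I want to show that the Anzellotti wedge product $\dif u \wedge F$ saturates the H\"older-type inequality $\int_M \dif u \wedge F \le \Comass(F)\Mass(\dif u) = \Mass(\dif u)$ (recall the normalization $\Comass(\rho)=1$ and that $F$ is a tight representative, so $\|F\|_{L^\infty} = \Comass(F) = 1$). First I would use the homological invariance (\ref{Ruelle Sullivan homology}): since $T_\kappa = \dif u$ is a closed current representing $\PD(\rho)$ in homology (equivalently $[\dif u]$ is the Poincar\'e dual of $\PD([\dif u])$ and pairs with $[F]=\rho$), we have $\int_M \dif u \wedge F = \langle [F], [\dif u]\rangle = \langle \rho, \PD([\dif u])\rangle$. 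Combining with the hypothesis (\ref{duality for least gradient}) gives $\int_M \dif u \wedge F = \Mass(\dif u) = \Mass(\dif u \wedge F)$ after noting that the reverse inequality $\Mass(\dif u) \le \int_M \dif u \wedge F$ now also holds; hence $\dif u$ is $F$-calibrated as a current, i.e. (\ref{1 extremality}) holds with $\Comass(\rho)=1$.

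Next I would promote this to a statement about the lamination $\kappa$ itself. Since $u$ has least gradient, Theorem \ref{1 harmonic is MOML} tells us the leaves of $\lambda_u$ are level sets $\partial\{u>y\}$ or $\partial\{u<y\}$ and its Ruelle-Sullivan current is $\dif u = T_\kappa$; in particular $\kappa$ and $\lambda_u$ have the same Ruelle-Sullivan current, so $\Mass(\kappa) = \Mass(\dif u)$. Now $T_\kappa$ is $F$-calibrated, so by Proposition \ref{calibration condition} (applied with the calibration $G := F/\|F\|_{L^\infty} = F$), the lamination $\kappa$ itself is $F$-calibrated, i.e. every leaf $\sigma$ of $\kappa$ satisfies $\int_\sigma F = \vol(\sigma)$. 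By definition this says $\kappa$ is dual to $\rho$, which is exactly the claim.

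The main obstacle I anticipate is not the calibration bookkeeping but making sure the Anzellotti wedge product $\dif u \wedge F$ behaves as expected: one needs $F \in L^\infty$ (which holds, as $F$ is tight, hence a best-comass representative with $\|F\|_{L^\infty}=\Comass(\rho)=1$) and $\dif u$ a $1$-current of locally finite mass, so that Proposition \ref{Anzellotti wedge product exists} furnishes the wedge product as a Radon measure and the inequality $\Mass(\dif u \wedge F) \le \Comass(F)\Mass(\dif u)$ (this is (\ref{Anzellotti Holder inequality})). The only subtlety is that $u$ lives on $\tilde M$ and is $\pi_1(M)$-equivariant while $F$ lives on $M$; but $\dif u$ descends to a current on $M$ (as in \S\ref{equivariance}), so all of the above takes place downstairs on the compact manifold $M$, and the homological pairing is well-defined. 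Once these identifications are in place, the argument is a short calibration computation with no further analytic input.
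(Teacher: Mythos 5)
Your proposal is correct and follows essentially the same route as the paper: the paper's proof is exactly the one-line chain $\int_M T_\kappa \wedge F = \int_M \dif u \wedge F = \langle \rho, \PD([\dif u])\rangle = \Mass(\dif u) = \Mass(T_\kappa)$ followed by an appeal to Proposition \ref{calibration condition}. The extra remarks about the Anzellotti pairing and the descent of $\dif u$ to $M$ are sound but not needed beyond what the paper already sets up in \S\ref{proof of Theorem B}.
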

\begin{proof}
We compute 
$$\int_M T_\kappa \wedge F = \int_M \dif u \wedge F = \langle \rho, \PD([\dif u])\rangle = \Mass(\dif u) = \Mass(T_\kappa)$$
so by Proposition \ref{calibration condition}, $\kappa$ is dual to $\rho$.
\end{proof}

%%%%%%%%%%%%%%%%%%%%%%%%%%%
\subsection{More on dual laminations}\label{dual lamination sec}
We are now ready to prove Corollary \ref{every minimizer is dual}.

\begin{corollary}
Let $M$ be a closed oriented Riemannian manifold of dimension $2 \leq d \leq 7$.
Let $\kappa$ be a homologically minimizing measured oriented lamination in $M$.
Then there exists a nonzero cohomology class $\rho \in H^{d - 1}(M, \RR)$ such that $\kappa$ is dual to $\rho$.
\end{corollary}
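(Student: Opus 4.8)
The plan is to choose $\rho$ as a costable-norm unit class that realizes the stable norm of the homology class $[\kappa]$, and then to verify condition (2) of Theorem \ref{lams are calibrated}. First I would set $\theta := [\kappa] \in H_{d-1}(M,\RR)$ and check that $\theta \neq 0$. A measured oriented lamination has strictly positive mass, so $\Mass(\kappa) = \Mass(T_\kappa) > 0$; if $\theta = 0$ then $[T_\kappa] = 0$, so the zero current lies in the cohomology class of $T_\kappa$, and homological minimality of $T_\kappa$ would force $\Mass(T_\kappa) \leq \Mass(0) = 0$, a contradiction. The same homological minimality gives $\Mass(\kappa) = \Mass(T_\kappa) = \Mass(\theta)$, the stable norm of $\theta$.

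Since $(H^{d-1}(M,\RR),\Comass)$ is the norm dual of the finite-dimensional space $(H_{d-1}(M,\RR),\Mass)$ under the cohomology--homology pairing, compactness of the unit ball (equivalently, Hahn--Banach) produces a class $\rho \in H^{d-1}(M,\RR)$ with $\Comass(\rho) = 1$ and $\langle \rho, \theta\rangle = \Mass(\theta) = \Mass(\kappa)$; in particular $\rho \neq 0$. It remains to verify (\ref{duality between stable and comass}) for this $\rho$ and $\kappa$. The equality $\langle \rho,[\kappa]\rangle/\Mass(\kappa) = 1 = \Comass(\rho)$ is immediate from the choice of $\rho$. For the supremum over measured oriented laminations $\lambda$, the duality of the costable and stable norms together with the inequality $\Mass([\lambda]) \leq \Mass(T_\lambda) = \Mass(\lambda)$ gives $\langle \rho,[\lambda]\rangle \leq \Comass(\rho)\,\Mass([\lambda]) \leq \Mass(\lambda)$, whence $\sup_\lambda \langle\rho,[\lambda]\rangle/\Mass(\lambda) \leq 1$; taking $\lambda = \kappa$ shows this supremum is also $\geq 1$. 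Hence all three quantities in (\ref{duality between stable and comass}) equal $1$, and the implication $(2)\Rightarrow(1)$ of Theorem \ref{lams are calibrated} shows that $\kappa$ is dual to $\rho$.

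This is a soft duality argument with no analytic content: all the substantial work has been front-loaded into Theorem \ref{lams are calibrated}. The only step requiring any care is confirming $[\kappa] \neq 0$ -- so that $\rho$ comes out nonzero -- which is where positivity of the mass of a measured lamination and homological minimality of $\kappa$ are used.
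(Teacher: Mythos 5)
Your proof is correct and follows essentially the same route as the paper's: apply Hahn--Banach duality between the stable and costable norms to produce a unit-comass class $\rho$ pairing maximally with $[\kappa]$, then invoke the implication $(2)\Rightarrow(1)$ of Theorem \ref{lams are calibrated}. The only difference is that you spell out the (correct, and worth recording) check that $[\kappa]\neq 0$, which the paper's one-line proof leaves implicit.
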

\begin{proof}
By the Hanh-Banach theorem, there exists a class $\rho$ such that $\Comass(\rho) = 1$ and
$$\langle \rho, [\kappa]\rangle = \Mass([\kappa]).$$
Let $F$ be a tight representative of $\rho$.
Then $\kappa$ attains the supremum in (\ref{duality between stable and comass}), so by Theorem \ref{lams are calibrated}, $\kappa$ is dual to $\rho$.
\end{proof}

For every $\rho$, there are $q$-harmonic functions $u_q$ converging to a least gradient function $u$, such that $\dif u$ is Ruelle-Sullivan for a dual lamination $\lambda$ to $\rho$.
We now show that not every dual lamination arises in this manner.

\begin{example}
We work on the flat square torus $\mathbf T^2 = \Sph^1_x \times \Sph^1_y$.
By Lemma \ref{closed sets are Radon supports}, for each $K \subseteq \Sph^1$, there is a Radon measure $\mu_K$ on $\Sph^1$ with support $K$.
Then $\mu_K$ is transverse to the lamination $\lambda_K$ with leaves $\{k\} \times \Sph^1$, where $k$ ranges over $K$.
Any such lamination is dual to the tight $1$-form $\dif y$.

Recall that we have an identification $H^1(M, \RR) \cong \RR^2$.
If $\alpha \in H^1(M, \RR)$ and $1 < q < \infty$ then the $\alpha$-equivariant $q$-harmonic function on $\RR^2$ is the linear function $z \mapsto \alpha \cdot z$.
So if we have classes $\alpha_q \in H^1(M, \RR)$, and $\alpha_q$-equivariant $q$-harmonic functions $u_q$, and we take $q \to 1$, we get a linear function of least gradient.
Such a function corresponds to a geodesic foliation of $\mathbf T^2$, so cannot equal $\lambda_K$ unless $K = \Sph^1$.
\end{example}

\section{The Euler-Lagrange equation for tight forms}\label{infinityMax}
\subsection{Derivation of the PDE}
We would like to imitate Aronsson's derivation of the $\infty$-Laplacian \cite{Aronsson67}, which we now recall.
Let $1 < p < \infty$ and let $u$ be a $p$-harmonic scalar field.
We write the $p$-Laplacian
$$\dif^*(|\dif u|^{p - 2} \dif u) = 0$$
in nondivergence form 
$$(p - 2) \langle \nabla^2 u, \dif u \otimes \dif u\rangle + |\dif u|^2 \Delta u = 0,$$
divide through by $p - 2$, and observe that, if $u$ converges in a suitably strong sense as $p \to \infty$, then we can neglect the second term to recover the $\infty$-Laplacian 
$$\langle \nabla^2 u, \dif u \otimes \dif u\rangle = 0.$$

Katzourakis observed that Aronsson's scheme does not quite work for systems of PDE because the division by $p - 2$ suppresses the part of the second term which is orthogonal to the first term \cite{Katzourakis12}.
In our work, the part of the second term which is orthogonal to the first term, which Katzourakis calls the \dfn{normal part of the full Aronsson equation} and we simply call \dfn{Katzourakis' correction}, implies that tight forms satisfy a certain involutivity condition.

In order to compute the Katzourakis correction, we need some algebraic preliminaries.
Fix a finite-dimensional vector space $V$, and let $\Lambda^k$ denote the $k$th exterior power of $V$.
If $X \in \Lambda^k$, we introduce the space 
$$X \wedge \Lambda^\ell := \{Z \in \Lambda^{k + \ell}: \exists Y \in \Lambda^\ell ~Z = X \wedge Y\}.$$

% \begin{lemma}\label{exterior splitting lemma}
% Let $X \in \Lambda^1$ and $v \in \Lambda^k$.
% Assume that $X \wedge v = 0$, $X \neq 0$, and $k \geq 1$.
% Then $v \in X \wedge \Lambda^{k - 1}$.
% \end{lemma}
% \begin{proof}
% Let $[\ell]^m$ be the set of increasing multiindices of length $m$ taking values in $\{1, \dots, \ell\}$.
% Let $n := \dim V$.
% Since $X$ is nonzero, we can find a basis $\{e^j: 1 \leq j \leq n\}$ of $V$ such that $X = e^n$.
% Given $J := (j_1, \dots, j_m) \in [n]^m$, we write $e^J := \bigwedge_{i=1}^m e^{j_i}$.
% Then $\{e^I: I \in [n]^m\}$ is a basis of $\Lambda^m$, so there are coefficients $c_J \in [n]^k$ such that
% $$v = \sum_{J \in [n]^k} c_J e^J = \sum_{i=1}^{n - 1} \sum_{I \in [n - 1]^{k - 1}} c_{In} e^{In} + \sum_{I \in [n - 1]^{k - 1}} c_{Ii} e^{Ii}.$$
% Therefore 
% $$0 = v \wedge X = \sum_{i=1}^{n - 1} \sum_{I \in [n - 1]^{k - 1}} c_{Ii} e^{Iin} + \sum_{I \in [n - 1]^{k - 1}} c_{In} e^{Inn} = \sum_{J \in [n - 1]^k} c_J e^{Jn}.$$
% Now $\{e^{Jn}: J \in [n-1]^k\}$ is a subset of the basis $\{e^I: I \in [n]^{k + 1}\}$ of $\Lambda^{k + 1}$, and therefore is linearly independent.
% So $c_J = 0$ for every $J \in [n - 1]^k$; in other words, if $J \in [n]^k$ and $c_J \neq 0$, then there exists $I \in [n - 1]^{k - 1}$ such that $J = (I, n)$.
% Now we set
% $$\varphi := -\sum_{I \in [n - 1]^{k - 1}} c_{In} e^I,$$
% so that 
% \begin{align*} 
% X \wedge \varphi &= \sum_{I \in [n - 1]^{k - 1}} c_{In} e^{In} = \sum_{J \in [n]^k} c_J e^J = v. \qedhere 
% \end{align*}
% \end{proof}

Suppose now that $V$ is equipped with an inner product.
Then the Hodge star $\star$ is defined, and we can identify a form $F \in \Lambda^k$ with the linear map % https://q.uiver.app/#q=WzAsMixbMCwwLCJcXExhbWJkYV57ayAtIDF9Il0sWzEsMCwiViJdLFswLDEsIlxcUHNpX0YiXV0=
\[\begin{tikzcd}
	{\Lambda^{k - 1}} & V
	\arrow["{\Psi_F}", from=1-1, to=1-2]
\end{tikzcd}\]
defined by 
$$\langle \Psi_F(X), v\rangle = \langle F, X \wedge v\rangle.$$

\begin{lemma}\label{computing kernel}
For every $F \in \Lambda^k$,
$$\ker \Psi_F = (\star F) \wedge \Lambda^{k - 2}.$$
\end{lemma}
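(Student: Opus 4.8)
The plan is to recognize $\Psi_F$ as a contraction operator and reduce the claim to a short exterior-algebra computation about the map $Y \mapsto (\star F)\wedge Y$. Write $d = \dim V$; the relevant case is $k = d-1$, so that $\star F$ is a $1$-form, which we identify with a vector of $V$ via the inner product. Let $\iota_X : \Lambda^k \to \Lambda^1$ be the adjoint of $X \wedge \cdot : \Lambda^1 \to \Lambda^k$, i.e. $\langle \iota_X \omega, v\rangle = \langle \omega, X \wedge v\rangle$ for all $v \in V$. By the definition of $\Psi_F$ this gives $\Psi_F(X) = \iota_X F$, hence $\ker\Psi_F = \{X \in \Lambda^{k-1} : \iota_X F = 0\}$. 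Next I would invoke the standard Hodge-star identity $\iota_X F = \pm\star(X \wedge \star F)$, which one verifies by pairing both sides against an arbitrary $v \in V$ and using that $\star$ is an isometry together with $a \wedge \star b = \langle a, b\rangle\, \star 1$ for forms of equal degree. Since $\star$ is invertible, this yields the reformulation
\begin{equation*}
\ker\Psi_F = \{X \in \Lambda^{k-1} : X \wedge \star F = 0\},
\end{equation*}
and, since $X \wedge \star F = \pm(\star F)\wedge X$, it remains to show that the kernel of $(\star F)\wedge\cdot$ on $\Lambda^{k-1}$ equals $(\star F)\wedge\Lambda^{k-2}$.

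The inclusion $\supseteq$ is immediate: if $X = (\star F)\wedge Y$ then $(\star F)\wedge X = (\star F)\wedge(\star F)\wedge Y = 0$, because $\star F$ has odd degree $1$ and hence $(\star F)\wedge(\star F) = 0$.

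For the reverse inclusion I would use the Koszul-type exactness of the complex $\bigl(\Lambda^\bullet,\ (\star F)\wedge\cdot\,\bigr)$. Assuming $F \neq 0$ (the only case we need), the covector $\star F$ is nonzero, so we may pick $v \in V$ with $\langle \star F, v\rangle = 1$; the Cartan-type identity $\iota_v\bigl((\star F)\wedge\omega\bigr) + (\star F)\wedge\iota_v\omega = \omega$ then exhibits $\iota_v$ as a contracting homotopy, so that any $X \in \Lambda^{k-1}$ with $(\star F)\wedge X = 0$ satisfies $X = (\star F)\wedge\iota_v X \in (\star F)\wedge\Lambda^{k-2}$. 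Together with the first paragraph, this proves the lemma.

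The argument is routine; the only points needing care are the sign conventions in $\iota_X F = \pm\star(X\wedge\star F)$, and the observation that in the case of interest $\star F$ is a $1$-form — hence of odd degree, which kills $(\star F)\wedge(\star F)$, and decomposable and nonzero, which makes the Koszul homotopy available. For general $k$ the degrees would not even match, so the statement genuinely lives in degree $d-1$.
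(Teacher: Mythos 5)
Your proof is correct and follows essentially the same route as the paper: both reduce $\ker\Psi_F$ via the Hodge pairing to $\{X : (\star F)\wedge X = 0\}$ and then identify this with $(\star F)\wedge\Lambda^{k-2}$. You are slightly more careful than the paper in two places — you supply the Koszul contracting homotopy for the final step (which the paper simply asserts) and you note that this requires $F \neq 0$ and that the statement only typechecks for $k = d-1$ — but these are elaborations of the same argument, not a different approach.
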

\begin{proof}
We compute
\begin{align*} 
\ker \Psi_F
&= \{X \in \Lambda^{k - 1}: \forall v \in V ~\langle F, X \wedge v\rangle = 0\} \\
&= \{X \in \Lambda^{k - 1}: \forall v \in V ~(\star F) \wedge X \wedge v = 0\} \\ 
&= \{X \in \Lambda^{k - 1}: (\star F) \wedge X = 0\} \\
&= (\star F) \wedge \Lambda^{k - 2}. \qedhere
\end{align*}
\end{proof}

For $v \in V$ and $F \in \Lambda^k$, let $\iota(v, F) = \iota_v F$ be the interior product. 
It interacts with the Hodge star as 
\begin{equation}\label{Hodge and interior}
\star(F \wedge v) = \iota(v, \star F).
\end{equation}

With the above algebra in place, we are now ready to write the PDE for a $p$-tight form in nondivergence form.
Katzourakis' correction is the second equation, (\ref{p tight nondivergence Katzourakis}).
In the third equation (\ref{p tight nondivergence tangential}), we write $\nabla F$ for the covariant derivative of $F$, which is a section of $\Omega^{d - 1} \otimes \Omega^1$ (so that $\langle \nabla F, F\rangle$ is a $1$-form).

\begin{lemma}\label{p tight nondivergence lemma}
Suppose that $2 < p < \infty$, and let $F$ be a $C^1$ $p$-tight form.
Then 
\begin{subequations} \label{p tight nondivergence}
\begin{empheq}[left=\empheqlbrace]{align}
&\dif F = 0 \label{p tight nondivergence closed}\\
&(\star F) \wedge \dif(\star F) = 0 \label{p tight nondivergence Katzourakis}\\
&\langle \nabla F, F\rangle \wedge \star F + \frac{|F|^2}{p - 2} \dif \star F = 0. \label{p tight nondivergence tangential}
\end{empheq}
\end{subequations}
Furthermore, one can rewrite (\ref{p tight nondivergence tangential}) as 
\begin{equation}\label{p tight nondivergence star}
	\iota(\langle \nabla F, F\rangle, F) + (-1)^{d - 1} \frac{|F|^2}{p - 2} \dif^* F = 0.
\end{equation}
\end{lemma}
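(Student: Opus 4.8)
The plan is to rewrite the divergence-form equation (\ref{p tight 2}) as $\dif(|G|^{p-2}G) = 0$ for the $1$-form $G := \star F$, expand it by the product rule, and then extract (\ref{p tight nondivergence Katzourakis}) and (\ref{p tight nondivergence tangential}) from the resulting $2$-form identity. The structural point is that wedging that identity with $\star F$ kills its ``tangential'' part, so the surviving relation carries no factor of $1/(p-2)$; this is Katzourakis' correction, and it is exactly the piece that persists in the limit $p \to \infty$.

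First, (\ref{p tight nondivergence closed}) is just the first half of the definition (\ref{p tight}) of $p$-tightness. For the rest, since $\dif^*$ equals $\star\dif\star$ up to a sign and $\star$ is an isometric isomorphism on forms, (\ref{p tight 2}) is equivalent to $\dif(|F|^{p-2}\star F) = 0$; writing $G := \star F$, so that $|G| = |F|$, this is $\dif(|G|^{p-2}G) = 0$. Since $p > 2$, the map $G \mapsto |G|^{p-2}G$ is $C^1$ (its modulus is $|G|^{p-1}$ with exponent $p - 1 > 1$), so $|G|^{p-2}G$ is a $C^1$ $1$-form and the product rule is legitimate. Using $\dif(|G|^{p-2}) = (p-2)|G|^{p-4}\langle\nabla G, G\rangle$, I get on the open set $\{F \neq 0\}$ that
\[
0 = \dif(|G|^{p-2}G) = (p-2)\,|G|^{p-4}\,\langle\nabla G, G\rangle \wedge G + |G|^{p-2}\,\dif G ,
\]
and, dividing by $|G|^{p-4} > 0$ and using $\dif G = \dif\star F$ together with $\langle\nabla G, G\rangle = \langle\nabla F, F\rangle$ (differentiate $|\star F|^2 = |F|^2$), that
\[
(p-2)\,\langle\nabla F, F\rangle \wedge \star F + |F|^2\,\dif\star F = 0 \qquad\text{on } \{F \neq 0\}.
\]
All three terms are continuous since $F \in C^1$ (and $|\langle\nabla F, F\rangle| \leq |\nabla F|\,|F|$), so this extends across $\partial\{F = 0\}$ by continuity and holds trivially on the interior of $\{F = 0\}$; hence it holds on all of $M$. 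Dividing by $p - 2$ is (\ref{p tight nondivergence tangential}).

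Now wedge the displayed identity with $\star F$. The first term dies because $\langle\nabla F, F\rangle \wedge (\star F) \wedge (\star F) = 0$ for the $1$-form $\star F$, and we are left with $|F|^2\,(\star F)\wedge\dif\star F = 0$; cancelling $|F|^2$ on $\{F \neq 0\}$ and extending by continuity gives $(\star F)\wedge\dif(\star F) = 0$, which is (\ref{p tight nondivergence Katzourakis}). Finally, for (\ref{p tight nondivergence star}), apply $\star$ to (\ref{p tight nondivergence tangential}): by (\ref{Hodge and interior}), after commuting $\star F$ past the $1$-form $\langle\nabla F, F\rangle$ and applying $\star\star F = \pm F$, the first term turns into $\pm\,\iota(\langle\nabla F, F\rangle, F)$, while $\star(\dif\star F) = \pm\,\dif^* F$; matching signs against the stated conventions produces (\ref{p tight nondivergence star}).

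The argument is essentially formal, and I expect no serious obstacle: the only delicate points are tracking the Hodge-star, $\star\star$, and codifferential sign conventions consistently in the last step, and justifying that the intermediate $2$-form identity extends across the zero set $\{F = 0\}$, which is the one place the $C^1$ hypothesis is genuinely used. The single structural observation — performing the wedge with $\star F$ on the undivided identity rather than on (\ref{p tight nondivergence tangential}) — is what isolates Katzourakis' correction, and it is dictated precisely by the requirement that the resulting equation survive the passage to $p = \infty$.
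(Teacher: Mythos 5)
Your proof is correct, and for equations (\ref{p tight nondivergence closed}), (\ref{p tight nondivergence tangential}) and (\ref{p tight nondivergence star}) it follows the paper's route exactly: expand $\dif(|F|^{p-2}\star F)=0$ by the product rule and divide out the power of $|F|$. (You are in fact slightly more careful than the paper about the set $\{F=0\}$ when $2<p<4$, where $|F|^{p-4}$ is singular; restricting to $\{F\neq 0\}$ and extending the resulting continuous identity by continuity is the right way to handle this.) Where you genuinely diverge is the Katzourakis correction (\ref{p tight nondivergence Katzourakis}). The paper passes to the interior-product form (\ref{p tight nondivergence star}), composes with the orthogonal projection $\Pi$ onto $\ker\Psi_F=(\star F)\wedge\Lambda^{d-2}$ (Lemma \ref{computing kernel}), observes that $\iota(\langle\nabla F,F\rangle,F)$ is annihilated by $\Pi$, and unwinds $\iota(\star F,\dif^*F)=0$ back through the Hodge star. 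You instead wedge the undivided $2$-form identity on the right with the $1$-form $\star F$ and kill the tangential term via $\star F\wedge\star F=0$. The two arguments are Hodge-dual to one another and yield the same equation, but yours is shorter and does not need Lemma \ref{computing kernel} at all; what the paper's version buys is the explicit identification of the ``normal part'' as the component of $\dif^*F$ along $\ker\Psi_F$, which is the conceptual link to Katzourakis's formulation for general $L^\infty$ systems. One small remark: your closing claim that the wedge must be performed on the \emph{undivided} identity is not quite right for finite $p$ --- wedging (\ref{p tight nondivergence tangential}) itself with $\star F$ gives the same conclusion after multiplying by $p-2$; the distinction only matters in the formal limit $p\to\infty$, where the second term of (\ref{p tight nondivergence tangential}) has already been discarded.
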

\begin{proof}
The first equation (\ref{p tight nondivergence closed}) is clear.
To compute the other two, we expand out 
\begin{align*}
0
&= \dif(|F|^{p - 2} \star F) \\
&= (p - 2) |F|^{p - 4} \langle \nabla F, F\rangle \wedge \star F + |F|^{p - 2} \dif \star F.
\end{align*}
If $F = 0$ then (\ref{p tight nondivergence tangential}) is obvious; otherwise, we can divide through by $(p - 2)|F|^{p - 4}$ to conclude (\ref{p tight nondivergence tangential}).
Taking the Hodge star of (\ref{p tight nondivergence tangential}) and using (\ref{Hodge and interior}), we obtain (\ref{p tight nondivergence star}).

Let $\Pi$ be the projection to $\ker \Psi_F$, so that for any $d - 2$-form $X$,
\begin{align*} 
0
&= \langle \Psi_F(\Pi(G)), \langle \nabla F, F\rangle \rangle \\
&= (-1)^d \langle F, \langle \nabla F, F\rangle \wedge \Pi(G)\rangle \\
&= \langle \iota(\langle \nabla F, F\rangle, F), \Pi(G)\rangle.
\end{align*}
So if we multiply both sides of (\ref{p tight nondivergence star}) on the right by $(p - 2) \Pi$, we annihilate the first term and are left with 
$$|F|^2 \dif^* F \circ \Pi = 0.$$
By Lemma \ref{computing kernel}, $\Pi$ is the projection onto $(\star F) \wedge \Omega^{d - 2}$.
Now let $Y$ be a $d - 3$-form, and observe that $\Pi((\star F) \wedge Y) = (\star F) \wedge Y$.
Therefore 
\begin{align*}
0 
&= |F|^2 \langle \dif^* F \circ \Pi, (\star F) \wedge Y\rangle \\
&= |F|^2 \langle \dif^* F, \Pi((\star F) \wedge Y)\rangle \\
&= |F|^2 \langle \dif^* F, (\star F) \wedge Y\rangle \\
&= |F|^2 \langle \iota(\star F, \dif^* F), Y\rangle.
\end{align*}
We claim that this implies 
\begin{equation}\label{almost Katzourakis}
\iota(\star F, \dif^* F) = 0.
\end{equation}
This is clear if $F = 0$ so assume otherwise.
Then we can divide by $|F|^2$. 
Since $Y$ was arbitary, we conclude (\ref{almost Katzourakis}).
From (\ref{almost Katzourakis}) and (\ref{Hodge and interior}) we compute 
\begin{align*} 
0
&= \iota(\star F, \star \dif \star F) \\
&= \star(\dif (\star F) \wedge \star F)
\end{align*}
which yields (\ref{p tight nondivergence Katzourakis}).
\end{proof}

\begin{proposition}
Assume that $F_p$ is a $p$-tight form for each $2 < p < \infty$, and that $F_p \to F$ in $C^1$.
Then the tight form $F$ satisfies 
\begin{subequations} \label{tight nondivergence}
\begin{empheq}[left=\empheqlbrace]{align}
&\dif F = 0 \label{tight nondivergence closed}\\
&(\star F) \wedge \dif(\star F) = 0 \label{tight nondivergence Katzourakis}\\
&\langle \nabla F, F\rangle \wedge \star F = 0. \label{tight nondivergence tangential}
\end{empheq}
\end{subequations}
Moreover, (\ref{tight nondivergence tangential}) can be rewritten 
\begin{equation}\label{tight nondivergence star}
\nabla_\beta F_{\gamma_1 \cdots \gamma_{d - 1}} F^{\gamma_1 \cdots \gamma_{d - 1}} {F^\beta}_{\alpha_1 \cdots \alpha_{d - 2}} = 0.
\end{equation}
\end{proposition}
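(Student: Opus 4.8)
The plan is to apply Lemma \ref{p tight nondivergence lemma} to each $F_p$ and then pass to the limit $p \to \infty$. Since $F_p \to F$ in $C^1$, each $F_p$ is in particular a $C^1$ $p$-tight form, so Lemma \ref{p tight nondivergence lemma} applies and gives that $F_p$ satisfies the system (\ref{p tight nondivergence}). I will take $p \to \infty$ in each of its three equations separately; since $F$ is a $C^1$ limit of closed forms, it is closed, so $[F]$ is defined and $F$ is the tight representative of its class.

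The first two equations (\ref{p tight nondivergence closed}) and (\ref{p tight nondivergence Katzourakis}) do not involve $p$, and both sides are continuous functions of $F_p$ together with its first-order derivatives (note that $\dif \star F_p$ involves only first derivatives of $F_p$). Hence uniform $C^1$ convergence $F_p \to F$ immediately yields (\ref{tight nondivergence closed}) and (\ref{tight nondivergence Katzourakis}). For (\ref{p tight nondivergence tangential}) I rewrite it as
\begin{equation*}
\langle \nabla F_p, F_p\rangle \wedge \star F_p = -\frac{|F_p|^2}{p - 2}\, \dif \star F_p.
\end{equation*}
The left-hand side converges to $\langle \nabla F, F\rangle \wedge \star F$ by $C^1$ convergence. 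The right-hand side is the main point: on any compact subset (in particular on all of $M$ when $M$ is compact), $C^1$ convergence provides a uniform bound on $|F_p|$ and on $\dif \star F_p$, so the factor $1/(p - 2) \to 0$ forces the right-hand side to $0$. This gives (\ref{tight nondivergence tangential}). This use of the uniform bound is the only delicate step, and it is precisely why the hypothesis of $C^1$ convergence is needed (the weak $L^r$ convergence of Proposition \ref{existence infinity} is all one controls a priori, and it does not suffice to pass to the limit in this term); hence (\ref{tight nondivergence}) is only asserted conditionally.

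It remains to rewrite (\ref{tight nondivergence tangential}) as (\ref{tight nondivergence star}). Taking the Hodge star of (\ref{tight nondivergence tangential}) and using (\ref{Hodge and interior}), exactly as in the passage from (\ref{p tight nondivergence tangential}) to (\ref{p tight nondivergence star}) in the proof of Lemma \ref{p tight nondivergence lemma} but now with the $\dif^* F$-term absent, one obtains $\iota(\langle \nabla F, F\rangle, F) = 0$. Unpacking this in index notation, $\langle \nabla F, F\rangle$ is the $1$-form with components $\nabla_\beta F_{\gamma_1 \cdots \gamma_{d - 1}} F^{\gamma_1 \cdots \gamma_{d - 1}}$, and contracting it into the $d - 1$-form $F$ produces the $d - 2$-form with components $\nabla_\beta F_{\gamma_1 \cdots \gamma_{d - 1}} F^{\gamma_1 \cdots \gamma_{d - 1}} {F^\beta}_{\alpha_1 \cdots \alpha_{d - 2}}$, whose vanishing is (\ref{tight nondivergence star}).
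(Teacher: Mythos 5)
Your proposal is correct and is essentially the paper's own proof, which simply says "this follows from taking the limit $p \to \infty$ in Lemma \ref{p tight nondivergence lemma}"; you have filled in exactly the intended details, in particular the key point that the uniform $C^1$ bounds on $|F_p|^2$ and $\dif \star F_p$ combined with the factor $1/(p-2) \to 0$ kill the lower-order term in (\ref{p tight nondivergence tangential}). The derivation of (\ref{tight nondivergence star}) via (\ref{Hodge and interior}) likewise matches the paper's treatment.
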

\begin{proof}
This follows from taking the limit $p \to \infty$ in Lemma \ref{p tight nondivergence lemma}.
\end{proof}

When $d = 2$, we recover the $\infty$-Laplacian by writing $F = \dif v$ and computing 
$$0 = \langle F, \langle \nabla F, F\rangle\rangle = \langle \nabla^2 v, \dif v \otimes \dif v\rangle.$$
(The Katzourakis correction (\ref{tight nondivergence Katzourakis}) vanishes when $d = 2$, by dimension counting.)
Since the $\infty$-Laplacian has a good theory of viscosity solutions, if $d = 2$, then the above derivation is valid even when $F_p$ is only converging in $C^0$ to $F$.
In particular, a tight $1$-form is nothing more than the derivative of an $\infty$-harmonic function, a fact that we have already tacitly used.

\begin{warning}
Though we derived (\ref{tight nondivergence}) as a formal limit of the PDE (\ref{p tight nondivergence}) for a $p$-tight form, if $d \geq 3$ then it does not follow that solutions of (\ref{tight nondivergence}) are themselves tight (in the sense that they are limits of $p$-tight forms).
Thus it is not true \emph{a priori} that solutions of (\ref{tight nondivergence}) have a conjugate $1$-harmonic function.
\end{warning}

%%%%%%%%%%%%%%%%%%%%%%%%%
\subsection{Geometric interpretation and absolute minimizers}\label{EL interpretation}
Throughout this section, let $M$ be a compact oriented Riemannian manifold of dimension $d$, possibly with boundary.

By a \dfn{streamline} of a $C^2$ function $v$, we mean an integral curve of $\nabla v$.
If $v$ is a sufficiently regular $\infty$-harmonic function and $\gamma$ is a streamline of $v$, then $v$ is an affine function on $\gamma$ \cite[\S3]{Aronsson67}.
Furthermore, if $v$ attains its Lipschitz constant on a point of the streamline $\gamma$, then $\gamma$ is a geodesic \cite[Theorem 5.2]{daskalopoulos2020transverse}.
We now show the analogous result for solutions of (\ref{tight nondivergence}).

To this end, first observe that (\ref{tight nondivergence Katzourakis}) is an involutivity condition: it asserts that on the open set $\{|F| > 0\}$, the bundle $\ker(\star F)$ is involutive.
By Frobenius' theorem, there is a foliation of $\{|F| > 0\}$ by integral hypersurfaces of $\ker(\star F)$.
To emphasize the relationship with streamlines, we call a connected integral hypersurface of $\ker(\star F)$ a \dfn{stream manifold} of $F$.

\begin{proposition}
Let $F$ be a $C^1$ solution of (\ref{tight nondivergence}), and let $N$ be a stream manifold of $F$.
Then $|F|$ is constant on $N$, and $F/|F|$ is the area form of $N$.
\end{proposition}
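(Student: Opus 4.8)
The plan is to work on the open set $\{|F| > 0\}$, where by Frobenius' theorem the distribution $\ker(\star F)$ integrates to the foliation whose leaves are the stream manifolds. Fix such a leaf $N$. First I would show that $F$ restricted to $N$ is, up to sign, the volume form of $N$: since $N$ is an integral hypersurface of $\ker(\star F)$, at each point $x \in N$ the tangent space $T_x N$ is exactly the $(d-1)$-plane annihilated by $\star F$, which by Lemma \ref{computing kernel} (with $k = d-1$, so that $\ker \Psi_{\star F}$ corresponds to $(\star \star F) \wedge \Lambda^{d-3} = F \wedge \Lambda^{d-3}$, or more directly: $T_x N = \ker(\star F)$ means $(\star F)\wedge v = 0$ for $v \in T_xN$) forces the pullback of $F$ to $N$ to be a top-degree form on $N$ proportional to the Riemannian volume form $\dif S_N$; indeed writing $F = \star(\xi)$ for the $1$-form $\xi = \star F$, the condition $\iota_v \xi = \xi(v) = 0$ for all $v \in T_xN$ says $\xi$ is conormal to $N$, i.e. $\xi = |F|\, \normal_N^\flat$, and then $F = \star(|F|\,\normal_N^\flat)$ pulls back to $\pm |F|\,\dif S_N$. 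So the only thing left to prove is that $|F|$ is \emph{constant} along $N$.

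To prove constancy of $|F|$ on $N$, I would unpack the tangential equation (\ref{tight nondivergence tangential}), namely $\langle \nabla F, F\rangle \wedge \star F = 0$. Write $\omega := \langle \nabla F, F\rangle = \tfrac{1}{2}\dif(|F|^2)$, which is a $1$-form; this identity holds because $\langle \nabla_\beta F, F\rangle = \tfrac12 \nabla_\beta |F|^2$. Then (\ref{tight nondivergence tangential}) reads $\omega \wedge \star F = 0$, i.e. $\tfrac12\,\dif(|F|^2) \wedge \star F = 0$. Taking the Hodge star and using (\ref{Hodge and interior}) in the form $\star(\eta \wedge \star F) = (-1)^{?}\iota(\eta^\sharp, \star\star F) = \pm\, \iota_{\eta^\sharp} F$ (the precise sign is degree-bookkeeping I will not grind through), this is equivalent to $\iota_{(\dif |F|^2)^\sharp} F = 0$ on $\{|F|>0\}$. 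By Lemma \ref{computing kernel} applied to $\Psi_F$ — whose kernel on $V$ (the degree-$1$ case, with $X$ a vector and $F$ a $(d-1)$-form, so $\Psi_F: V \to \Lambda^{d-2}$ given by $v \mapsto \iota_v F$) has kernel $\{v : \iota_v F = 0\}$, which is precisely the rank-$1$ line spanned by the conormal direction, i.e. $(\star F)^\sharp = \normal_N$ up to scale — I conclude that $(\dif |F|^2)^\sharp$ is parallel to the unit normal $\normal_N$ of the stream manifold at each point. Equivalently, $\dif |F|^2$ has no component tangent to $N$, so $\dif(|F|^2)|_{TN} = 0$, which is exactly the statement that $|F|^2$, and hence $|F|$, is locally constant along each connected leaf $N$.

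Finally, pinning down the sign: $F/|F|$ pulls back to $\pm \dif S_N$ on $N$, and to get the $+$ sign (so that $F/|F|$ is genuinely the area form, consistent with the orientation convention) I would note that the orientation of $N$ as a stream manifold is the one induced by declaring $F/|F|$ positive; this is a matter of convention and I would simply fix it that way, remarking that reversing the orientation of $N$ reverses the sign. Putting the two halves together: on each stream manifold $N$, $|F|$ is constant and $F/|F| = \dif S_N$, which is the claim.

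\textbf{Main obstacle.} The substantive step is deducing $\iota_{(\dif|F|^2)^\sharp} F = 0$ from (\ref{tight nondivergence tangential}) and then extracting from it, via Lemma \ref{computing kernel}, that $\dif |F|^2$ is conormal to $N$ — this is where the algebra of $\Psi_F$ and the Hodge-star identity (\ref{Hodge and interior}) really gets used, and where sign and degree conventions are easy to botch. The geometric half (identifying the pullback of $F$ with $\pm \dif S_N$ from the integral-hypersurface condition) and the Frobenius input are comparatively routine given the hypotheses and the earlier lemmas.
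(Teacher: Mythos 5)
Your proposal is correct and follows essentially the same route as the paper: both identify $\star F$ as conormal to $N$ (so $F/|F|$ pulls back to the area form up to orientation), rewrite $\langle \nabla F, F\rangle$ as $\tfrac12 \dif(|F|^2)$, and use the tangential equation $\dif(|F|^2)\wedge \star F = 0$ to conclude that $\dif(|F|^2)$ is conormal to $N$, hence $|F|$ is constant on each connected leaf. The only difference is presentational — the paper does the last step componentwise in adapted coordinates with $\star F|_N = \psi\,\dif x^d$, whereas you phrase it invariantly via the interior product — and the sign ambiguity you leave open is harmless since the paper likewise allows reversing the orientation of $N$.
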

\begin{proof}
We work in coordinates $(x^\alpha)$ such that $N = \{x^d = 0\}$ and $\partial_d$ is the unit normal vector field of $N$.
The assumption on $N$ means that (possibly after reversing orientation), for some scalar field $\psi > 0$ defined on $N$, $\star F|_N = \psi \dif x^d$, and $F/\psi$ is the area form on $N$.
Since $F$ solves (\ref{tight nondivergence tangential}),
\begin{align*}
0 
&= \partial_\alpha(|F|^2)(\star F)_\beta - \partial_\beta(|F|^2)(\star F)_\alpha \\
&= 2\psi(\delta_{\beta d} \partial_\alpha \psi - \delta_{\alpha d} \partial_\beta \psi).
\end{align*}
Taking $\alpha \in \{1, \dots, d - 1\}$ and $\beta = d$ we deduce that $\partial_\alpha \psi = 0$.
Since $\partial_\alpha$ is an arbitrary tangent vector to $N$, we conclude that $\psi$ is constant.
\end{proof}

\begin{corollary}\label{infty Max calibrates}
Let $F$ be a $C^1$ solution of (\ref{tight nondivergence}), let $N$ be a stream manifold of $F$, and suppose that for some $x \in N$, $|F|$ attains a local maximum at $x$.
If $|F(x)| = 1$, then $N$ is $F$-calibrated.
\end{corollary}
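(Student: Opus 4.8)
The proof is a short consequence of the preceding proposition, so the plan is simply to assemble the pieces. First recall that, by definition, a stream manifold $N$ is a \emph{connected} integral hypersurface of $\ker(\star F)$ inside the open set $\{|F| > 0\}$; in particular $|F| > 0$ on $N$, so the preceding proposition applies and gives two facts: $|F|$ is constant along $N$, and $F/|F|$ restricts on $N$ to the Riemannian area form of $N$. Since $x \in N$ and $|F(x)| = 1$, constancy of $|F|$ along the connected set $N$ forces $|F| \equiv 1$ on $N$.

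With this in hand the conclusion is immediate: on $N$ the form $F$ equals $|F|$ times the area form of $N$, i.e.\ it equals the area form of $N$ itself, which is exactly the statement that $N$ is $F$-calibrated (equivalently $\int_N F = \vol(N)$). The only bookkeeping point is orientation: the preceding proposition identifies $F/|F|$ with the area form of $N$ only after a possible reversal of orientation of $N$, the sign being the one from the convention $\star F|_N = \psi\,\dif x^d$ with $\psi > 0$ used in its proof. I would fix the orientation of $N$ once and for all by that coorientation together with the ambient orientation of $M$, so that the identification is an honest equality of top forms on $N$ and no sign ambiguity survives; since a stream manifold carries no a priori orientation, this costs nothing.

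I expect no real obstacle, since all the analytic content — extracting constancy of $|F|$ and the pullback identity from (\ref{tight nondivergence tangential}) via Frobenius — was already carried out in the preceding proposition. I would only add a sentence on the role of the local-maximum hypothesis: it is the condition (flagged in the discussion after (\ref{tight Einstein})) under which a stream manifold through $x$ is actually produced, and under which $F$ behaves near $N$ like a calibration of unit comass, so that the calibrated hypersurface $N$ is area-minimizing against nearby competitors. For the pullback computation itself it is not needed once $N$ and the normalization $|F(x)| = 1$ are given.
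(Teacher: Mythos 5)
Your proof is correct and takes the same route as the paper's (the corollary is left as an immediate consequence of the preceding proposition): constancy of $|F|$ along the connected stream manifold $N$ propagates $|F(x)| = 1$ to all of $N$, whence $F = F/|F|$ restricts to the area form of $N$. Your side remarks are also accurate — the orientation of $N$ is fixed by the coorientation $\star F|_N = \psi\,\dif x^d$ from the proposition's proof, and the local-maximum hypothesis plays no role in the pullback identity itself but only in the downstream use of the corollary (area-minimization in Theorem \ref{ABC inequality}).
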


A key feature of the optimal Lipschitz extension problem is the Kirszbraun--Valentine theorem \cite{Lang1997}.
This theorem implies that if $u$ is an $\infty$-harmonic map from a euclidean domain $M$ to $\RR^D$, then for every open $U \subseteq M$,
$$\Lip(u, U) = \Lip(u, \partial U).$$
We now show that the analogue of this theorem, where the Lipschitz constant is replaced by the comass, holds for solutions of (\ref{tight nondivergence}).

\begin{theorem}\label{ABC inequality}
Let $F$ be a $C^1$ solution of (\ref{tight nondivergence}), and let $U$ be an open set such that $H_{d - 1}(U, \RR) = 0$.
Then 
$$\|F\|_{C^0(U)} = \|F\|_{C^0(\partial U)},$$
where the right-hand side refers to the full trace, not just the pullback, of $F$.
\end{theorem}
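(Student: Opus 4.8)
The plan is to run a calibration/max-flow-min-cut style argument, using the stream manifolds of $F$ as the "cut" that certifies that the comass cannot exceed its boundary value. We already know from Corollary \ref{infty Max calibrates} that wherever $|F|$ attains a local interior maximum and one normalizes so that this maximal value is $1$, the stream manifold through that point is $F$-calibrated, hence area-minimizing. The key point is that a calibrated hypersurface cannot be "trapped" inside $U$: it must exit through $\partial U$, because $H_{d-1}(U,\RR)=0$ forces the relevant cycle to bound, and an area-minimizing hypersurface that bounds in $U$ would have to be empty. This is the mechanism that propagates the interior maximum to the boundary.

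\medskip

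\noindent\emph{Step 1 (reduction).} It is trivial that $\|F\|_{C^0(\partial U)}\le\|F\|_{C^0(\overline U)}$, so by continuity of $F$ on $\overline U$ it suffices to show that $|F|$ does not attain its maximum over $\overline U$ at an interior point, unless it is constant. Suppose for contradiction that $\max_{\overline U}|F| = m$ is attained at some $x_0\in U$ and that $|F|<m$ somewhere on $\overline U$ (if $|F|\equiv m$ on $\overline U$ there is nothing to prove since then the boundary value is also $m$). After rescaling $F\mapsto F/m$ we may assume $m=1$, so $|F(x_0)|=1$ is an interior local (indeed global over $\overline U$) maximum.

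\medskip

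\noindent\emph{Step 2 (the calibrated stream manifold).} Since $|F(x_0)|=1>0$, $x_0$ lies in the open set $\{|F|>0\}$ where, by the involutivity condition (\ref{tight nondivergence Katzourakis}) and Frobenius, there is a stream manifold $N$ of $F$ through $x_0$. By the preceding Proposition, $|F|$ is constant equal to $1$ along $N$, and by Corollary \ref{infty Max calibrates}, $N$ is $F$-calibrated, hence a minimal hypersurface which is area-minimizing among homologous competitors. Moreover, since $|F|\equiv 1$ on $N$ and $|F|\le 1$ on $\overline U$ with equality exactly where $|F|=1$, the maximal leaf $N$ stays inside $\overline{\{|F|=1\}}$; one then either continues $N$ as a maximal integral hypersurface of $\ker(\star F)$ within $\{|F|>0\}\supseteq\{|F|=1\}$, or reaches $\partial U$.

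\medskip

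\noindent\emph{Step 3 (topological obstruction forces exit; the hard part).} Here is where $H_{d-1}(U,\RR)=0$ enters, and this is the step I expect to be the main obstacle. The maximal leaf $N$ cannot be a closed hypersurface contained in $U$: if it were, it would define a class in $H_{d-1}(U,\RR)=0$, so it would bound a $d$-chain in $U$, and then the calibration inequality $\vol(N)=\int_N F = \int_{\partial(\text{chain})}F + \int\dif(\cdots) \le \vol(\text{any competitor in the chain's boundary})$ would force $\vol(N)=0$, contradicting that $N$ passes through $x_0$ with $|F|=1$. So $N$, extended maximally, must either hit $\partial U$ or escape to $\partial\{|F|>0\}$; but on $\partial\{|F|=1\}\cap U$ one has $|F|<1$ by openness of the maximum set's complement — contradicting $|F|\equiv 1$ on $N$. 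Hence $N$ reaches $\partial U$. But then $N$ carries a point of $\partial U$ at which $|F|=1$, i.e. $\|F\|_{C^0(\partial U)}=1=\|F\|_{C^0(\overline U)}$, completing the argument. (The delicate issues to handle carefully: that the maximal continuation of $N$ is a genuine embedded hypersurface up to the boundary and not something that limits onto a closed leaf or spirals; controlling the closure of a leaf in a possibly non-compact-in-$U$ situation; and making the "bounds in $U$ $\Rightarrow$ zero area" argument rigorous via Stokes when $N$ is only an integral manifold of a $C^1$ distribution. One likely uses a primitive $F=\dif A$ available locally by the regularized Poincaré lemma, glued using $H_{d-1}(U,\RR)=0$ to get a global primitive on $U$, which immediately gives $\int_N F=\int_{\partial N}A$ and hence $\vol(N)=0$ once $N$ is closed in $U$.)

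\medskip

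\noindent\emph{Step 4 (the "in particular" clause).} The last sentence of the theorem is then immediate: if $F'$ is any closed $d-1$-form on $\overline U$ with the same full trace on $\partial U$, then $F-F'$ is exact on $U$ (using $H_{d-1}(U,\RR)=0$), so $F$ and $F'$ are cohomologous rel boundary; but $\|F'\|_{C^0(U)}\ge\|F'\|_{C^0(\partial U)}=\|F\|_{C^0(\partial U)}=\|F\|_{C^0(U)}$ by the equality just proved applied to nothing more than the trace — actually one only needs $\|F'\|_{C^0(U)}\ge\|F'\|_{C^0(\partial U)}$, which holds trivially by continuity — so $F$ minimizes its $C^0$ norm among all closed forms of the same trace.
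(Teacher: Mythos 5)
Your proposal is correct and follows essentially the same route as the paper: the stream manifold through an interior maximum of $|F|$ is calibrated and hence area-minimizing, the hypothesis $H_{d-1}(U,\RR)=0$ rules out a closed leaf (a null-homologous area-minimizing hypersurface would be empty), so the leaf must have nonempty boundary contained in $\partial U$, and constancy of $|F|$ along the leaf transports the maximum to the boundary. The delicate points you flag in Step 3 are likewise left implicit in the paper's proof, and your Step 4 matches the paper's separate Corollary on absolute minimality.
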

\begin{proof}
It suffices to show that, if $x \in U$, then $|F(x)| \leq \|F\|_{C^0(\partial U)}$.
Thus we may assume that $|F|$ attains a local maximum at $x$, and that $F(x)$ is nonzero.
Let $N$ be the stream manifold of $F$ at $x$.
Since $|F|$ attains a local maximum at $x$, by Corollary \ref{infty Max calibrates}, $N$ is area-minimizing among all hypersurfaces homologous to $N$ relative to $\partial N$, and $|F|$ is constant along $N$.
Since $F(x)$ is nonzero and $F$ is continuous, $N$ avoids a neighborhood of $\{F = 0\}$.

Since $H_{d - 1}(U, \RR) = 0$, $N$ is area-minimizing among all hypersurfaces with the same boundary, and since $N$ is nonempty it follows that $\partial N$ is nonempty.
Since $N$ avoids a neighborhood of $\{F = 0\}$ and the stream manifolds are a foliation of $U \setminus \{F = 0\}$, $\partial N$ must be a nonempty subset of $\partial U$.
Therefore there is a path from $\partial U$ to $x$ along which $|F|$ is constant and $|F(x)| \leq \|F\|_{C^0(\partial U)}$.
\end{proof}

Another interpretation of the proof of Theorem \ref{ABC inequality} is that for every open set $U$ such that $H_{d - 1}(U, \RR) = 0$, $F|_U$ either has a $1$-harmonic conjugate $u \in BV_\loc(U, \RR)$, or $|F|$ has no local maxima in $U$.
Indeed, if $N$ is the stream manifold through a local maximum of $|F|$, then $N = \partial V$ for some open $V \subseteq U$ by the homological assumption.
It follows from the proof that $1_V$ is a $1$-harmonic conjugate of $F|_U$.

For a basis of open sets $\mathscr U$, a form $F$ has \dfn{absolutely minimal comass} on $\mathscr U$ if for every $U \in \mathscr U$ and every $G \in C^0_\cpt(U, \Omega^{d - 1}_{\rm cl})$,
$$\|F\|_{L^\infty(U)} \leq \|F + G\|_{L^\infty(U)}.$$
This definition is an adaptation of the definition of absolutely minimizing Lipschitz functions \cite{Crandall2008}.
We are now ready to prove Theorem \ref{tight are absolute minimizers}:

\begin{corollary}\label{tight and integrable implies infinity maxwell}
Let $F$ be a $C^1$ solution of (\ref{tight nondivergence}).
Then $F$ has absolutely minimal comass on open sets $U$ with $H_{d - 1}(U, \RR) = 0$.
\end{corollary}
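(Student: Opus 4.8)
The claim to prove is Corollary \ref{tight and integrable implies infinity maxwell}: a $C^1$ solution $F$ of (\ref{tight nondivergence}) has absolutely minimal comass on open sets $U$ with $H_{d-1}(U,\RR) = 0$. My plan is to reduce this statement directly to Theorem \ref{ABC inequality}, which has already established that $\|F\|_{C^0(U)} = \|F\|_{C^0(\partial U)}$. The point is that absolute minimality is a competitor comparison: given $G \in C^0_\cpt(U, \Omega^{d-1}_{\rm cl})$, I need $\|F\|_{L^\infty(U)} \leq \|F + G\|_{L^\infty(U)}$. Since $G$ has compact support in $U$, the trace of $F + G$ on $\partial U$ agrees with the trace of $F$; in particular $\|F\|_{C^0(\partial U)} = \|F + G\|_{C^0(\partial U)} \leq \|F + G\|_{C^0(\overline U)}$.

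The chain of inequalities then closes immediately: by Theorem \ref{ABC inequality} applied to $F$,
\[
\|F\|_{L^\infty(U)} = \|F\|_{C^0(U)} = \|F\|_{C^0(\partial U)} = \|F + G\|_{C^0(\partial U)} \leq \|F + G\|_{L^\infty(U)}.
\]
The second equality uses that $G \equiv 0$ near $\partial U$, so $F$ and $F + G$ have identical traces there (both the pullback and, since $G$ vanishes in a neighborhood of $\partial U$, the full trace). The final inequality is just the trivial bound that the sup over the boundary is at most the sup over the open set. One subtlety worth a sentence: $F + G$ need not be $C^1$, so $\|\cdot\|_{L^\infty(U)}$ should be interpreted as the essential supremum; but since $G$ is continuous and $F$ is $C^1$, $F + G$ is continuous, so $\|F + G\|_{L^\infty(U)} = \|F + G\|_{C^0(U)}$ and there is no measure-theoretic issue.

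I do not anticipate a genuine obstacle here — the corollary is essentially a repackaging of Theorem \ref{ABC inequality} in the language of absolute minimizers, exactly as in the scalar $\infty$-Laplacian case where the analogous statement follows from the Kirszbraun-type boundary-equals-interior identity for the Lipschitz constant. The only thing requiring minor care is making sure the definition of "absolutely minimal comass on $\mathscr U$" is matched: one takes $\mathscr U$ to be the basis of all open sets $U$ with $H_{d-1}(U,\RR) = 0$, and then the above argument applies verbatim to each such $U$. Since $F + G$ is a closed form (as $\dif F = 0$ by (\ref{tight nondivergence closed}) and $\dif G = 0$ by hypothesis) cohomologous to $F$ in $U$, this also records that $F|_U$ minimizes its $C^0$ norm among all closed forms of the same trace, which is the form in which Theorem \ref{tight are absolute minimizers} was advertised in the introduction.
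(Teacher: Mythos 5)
Your proof is correct and is essentially the paper's own argument: both reduce the competitor comparison to Theorem \ref{ABC inequality} via the chain $\|F\|_{C^0(U)} = \|F\|_{C^0(\partial U)} = \|F+G\|_{C^0(\partial U)} \leq \|F+G\|_{C^0(U)}$, using that $G$ vanishes near $\partial U$. Your added remarks on the full trace and the $L^\infty$-versus-$C^0$ point are harmless refinements of the same argument.
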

\begin{proof}
Let $G \in C^0_\cpt(U, \Omega^{d - 1})$.
Then, by Theorem \ref{ABC inequality},
\begin{align*}
\|F + G\|_{C^0(U)} &\geq \|F + G\|_{C^0(\partial U)} = \|F\|_{C^0(\partial U)} = \|F\|_{C^0(\partial U)}. \qedhere 
\end{align*}
\end{proof}

Corollary \ref{tight and integrable implies infinity maxwell} is a somewhat surprising result.
Another $\infty$-Laplacian system studied by Katzourakis has solutions which are not absolute minimizers \cite{Katzourakis15}.
The difference is that our situation, as in the work of Aronsson \cite{Aronsson67} and Sheffield and Smart \cite{Sheffield12}, there is a generalization of the streamlines of an $\infty$-harmonic function, which are calibrated.

%%%%%%%%%%%%%%%%%
\subsection{Absolute minimizers and the Barron--Jensen--Wang theorem}
As above, let $M$ be a compact oriented Riemannian manifold of dimension $d$, possibly with boundary.
We would like to establish a converse to Corollary \ref{tight and integrable implies infinity maxwell}: namely, that a closed form of absolutely minimal comass solve (\ref{tight nondivergence}).
However, this is not true, as solutions need not satisfy the Katzourakis correction (\ref{tight nondivergence Katzourakis}).
Instead, we show:

\begin{proposition}\label{ABC implies PDE}
Let $F$ be a $C^1$ closed $d - 1$-form.
If $F$ has absolutely minimal comass on small balls, then $F$ solves
\begin{subequations} \label{tight nondivergence noninvolutive}
\begin{empheq}[left=\empheqlbrace]{align}
&\dif F = 0 \label{tight nondivergence noninvolutive closed}\\
&\langle \nabla F, F\rangle \wedge \star F = 0. \label{tight nondivergence noninvolutive tangential}
\end{empheq}
\end{subequations}
\end{proposition}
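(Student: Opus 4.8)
The plan is to localize to a small ball $B$, use the hypothesis that $F$ absolutely minimizes its comass on $B$ to extract a pointwise optimality condition at any interior maximum of $|F|$, and then show this optimality condition forces the tangential equation (\ref{tight nondivergence noninvolutive tangential}). The equation (\ref{tight nondivergence noninvolutive closed}) is part of the hypothesis, so there is nothing to prove there. The crux is (\ref{tight nondivergence noninvolutive tangential}), which, via the identity $\langle \nabla F, F\rangle \wedge \star F = \tfrac12 \dif(|F|^2) \wedge \star F$ up to sign, is equivalent to the statement that at each point the gradient of $|F|^2$ lies in the span of $\star F$ — that is, $\dif(|F|^2)$ is proportional to $\star F$ pointwise (wherever $F \neq 0$). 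So the real content is: \emph{an absolute comass minimizer has the property that $|F|$ is infinitesimally constant in every direction tangent to $\ker(\star F)$.}

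Here is how I would establish that. Fix $x_0$ with $F(x_0) \neq 0$, and suppose for contradiction that $\dif(|F|^2)(x_0)$ has a nonzero component along some tangent vector $v \in \ker(\star F)(x_0)$; without loss of generality $v$ points in a direction of strict increase of $|F|$. Pass to a small ball $B$ centered at a point $x_1$ slightly displaced from $x_0$ in the $-v$ direction, chosen so that $|F|$ attains a \emph{strict} interior maximum over $\overline B$ at some point $x_\ast \in B$ near $x_0$, with $|F(x_\ast)| > \|F\|_{C^0(\partial B)}$. (This is arranged precisely using the directional derivative of $|F|$ along the stream directions: moving against $v$ decreases $|F|$, so one can trap a bump.) Now I would construct a compactly supported closed $d-1$-form $G \in C^0_\cpt(B, \Omega^{d-1}_{\rm cl})$ such that $\|F+G\|_{C^0(B)} < \|F\|_{C^0(B)}$, contradicting absolute minimality. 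The natural candidate is $G = \dif(\chi A)$, where $F = \dif A$ locally (valid on a small ball by the Poincaré lemma, cf.\ Proposition \ref{Hodge theorem}) and $\chi$ is a cutoff equal to $1$ near $x_\ast$ and supported in $B$; one tunes a multiplicative constant slightly below $1$ so that on the region where $\chi \equiv 1$ the form is scaled down below the boundary value, while on the transition region the triangle inequality keeps $|F + G|$ below $\|F\|_{C^0(\partial B)} < |F(x_\ast)|$. Because the interior max was strict and isolated, the perturbation strictly lowers the sup over $B$.

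The main obstacle is making the perturbation $G$ genuinely lower the $C^0$ norm \emph{everywhere} in $B$, not just near the maximum: on the annular transition region where $\dif\chi \neq 0$ the term $\dif\chi \wedge A$ could in principle increase $|F+G|$. I expect to handle this exactly as in the scalar $\infty$-Laplacian theory — by choosing the geometry so that the "bad'' annulus sits in a region where $|F|$ is already safely below the maximum (using continuity of $F$ and the strictness of the interior max), so a sufficiently thin, slowly-varying cutoff keeps the perturbed norm below $|F(x_\ast)|$. A secondary subtlety is that $\ker(\star F)$ need not be involutive here (we are not assuming (\ref{tight nondivergence Katzourakis})), so there are no honest stream manifolds to integrate along; but the argument only needs the \emph{infinitesimal} statement about $\dif(|F|^2)$ in directions annihilating $\star F$, which is a pointwise first-order fact and survives without integrability. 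Once the contradiction is reached, $\dif(|F|^2)$ is everywhere proportional to $\star F$ on $\{F \neq 0\}$, which reads as $\dif(|F|^2) \wedge \star F = 0$, i.e.\ (\ref{tight nondivergence noninvolutive tangential}); on the open set $\{F = 0\}$ both sides vanish trivially, and by continuity of the $C^1$ form the identity extends to all of $M$.
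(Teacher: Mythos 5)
There is a genuine gap, and it sits at the very first step of your contradiction argument. You assume $\dif(|F|^2)(x_0)$ has a nonzero component along some $v \in \ker(\star F)(x_0)$ and then claim you can choose a small ball $B$ near $x_0$ on which $|F|$ attains a \emph{strict interior} maximum exceeding $\|F\|_{C^0(\partial B)}$. This is impossible: since $F$ is $C^1$, $|F|^2$ is $C^1$, and if $\dif(|F|^2)(x_0) \neq 0$ then $\dif(|F|^2)$ is nonvanishing on a whole neighborhood of $x_0$, so $|F|^2$ has no interior critical points there and its maximum over any sufficiently small closed ball is attained only on the boundary. There is no bump to trap. A second, independent symptom that the approach cannot work as written: the hypothesis $v \in \ker(\star F)$ plays no role anywhere in your construction of $B$ or of the perturbation $G$, so if the argument succeeded it would prove that $\dif(|F|^2)$ vanishes identically --- i.e.\ that $|F|$ is constant --- which is false for absolute minimizers (e.g.\ $F = \dif\theta$ as in Example \ref{boundaries bad}, where $|F| = 1/r$). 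The conclusion (\ref{tight nondivergence noninvolutive tangential}) constrains $\dif(|F|^2)$ only on $\ker(\star F)$ and leaves the derivative along $(\star F)^\sharp$ unconstrained, so a correct proof must produce the direction $\star F$ from the first variation; your geometric setup never does.

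The paper's proof is variational rather than a perturbation at a maximum: locally write $F = \dif A$ (Proposition \ref{Hodge theorem}), regard $A$ as an absolute minimizer of the Hamiltonian $f(x,\omega,\xi) = |\xi^{\rm as}|^2$ on the first jet bundle of $\Omega^{d-2}$, and invoke the Barron--Jensen--Wang theorem (Theorem \ref{BJW Theorem}, itself an application of Danskin's theorem to $t \mapsto \sup_U f(A+tB, \nabla A + t\nabla B)$). The resulting Aronsson-type equation contracts $\dif(|F|^2)$ against $\partial f/\partial\xi = 2F$, and this contraction is precisely what restricts the identity to the directions in $\ker(\star F)$; unwinding it with (\ref{Hodge and interior}) gives (\ref{tight nondivergence noninvolutive tangential}). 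If you want a self-contained argument, the piece to supply is that Danskin-type first-variation computation over the set where the sup is attained, not a maximum-principle perturbation --- the latter is the mechanism behind the \emph{forward} direction (Theorem \ref{ABC inequality}), not this converse. (Your worry about the $\dif\chi \wedge A$ term on the transition annulus is also legitimate --- it scales like $\|A\|_{C^0}/\delta$ and does not become small merely because the annulus sits below the max --- but it is moot given the first problem.)
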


We will deduce Proposition \ref{ABC implies PDE} as a consequence of a more general theorem of Barron, Jensen, and Wang \cite[Theorem 5.2]{Barron2001}, itself a consequence of Danskin's theorem.
To formulate their theorem in a diffeomorphism-invariant way, let $(E, \nabla)$ be a vector bundle with connection, and introduce its \dfn{first jet bundle}
$$J^1(E) := E \oplus (\Omega^1 \otimes E).$$
We write $(x, \omega, \xi)$ for a typical element of $J^1(E)$, so $x$ is a point of $M$, $\omega$ is a vector in the fiber $E_x$, and $\xi$ is a vector in $\Omega^1_x \otimes E_x$.

Let $f \in C^1(J^1(E), \RR_+)$ be a nonnegative function on $J^1(E)$.
We say that a section $A \in W^{1, \infty}(M, E)$ \dfn{absolutely minimizes} $f$ if, for every open $U \subseteq M$ and $B \in W^{1, \infty}_\cpt(U, E)$,
$$\sup_{x \in U} f(x, A(x), \nabla A(x)) \leq \sup_{x \in U} f(x, A(x) + B(x), \nabla A(x) + \nabla B(x)).$$
We write $f(A, \nabla A)$ to mean the function $x \mapsto f(x, A(x), \nabla A(x))$.

The proof of \cite[Theorem 5.2]{Barron2001} goes through without changes to the diffeomorphism-invarant setting, so we have:

\begin{theorem}[Barron--Jensen--Wang]\label{BJW Theorem}
Suppose that $(E, \nabla)$ is a vector bundle with connection, $f \in C^1(J^1(E), \RR_+)$, and $A \in C^2(M, E)$ absolutely minimizes $f$.
Then 
\begin{equation}
\left\langle \frac{\partial f}{\partial \xi}(A, \nabla A), \dif(f(A, \nabla A))\right\rangle = 0. \label{BJW}
\end{equation}
\end{theorem}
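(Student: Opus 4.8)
The plan is to run the proof of \cite[Theorem 5.2]{Barron2001} in the diffeomorphism-invariant, bundle-valued setting; the analytic content is a single application of Danskin's theorem, and only cosmetic changes are needed. The connection $\nabla$ supplies the splitting $J^1(E) = E \oplus (\Omega^1 \otimes E)$ with respect to which $\partial f/\partial \omega$ and $\partial f/\partial\xi$ are defined, and, being linear, it gives $\nabla(A + tB) = \nabla A + t\,\nabla B$ for every section $B$. Write $g := f(A, \nabla A)$, which lies in $C^1(M, \RR)$ since $f \in C^1$ and $A \in C^2$; the identity to be proved, $\langle \partial f/\partial\xi(A, \nabla A), \dif g\rangle = 0$, is the Aronsson equation for $f$.

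First I would fix a relatively compact open set $U$ and a perturbation $B \in W^{1,\infty}_\cpt(U, E)$, put $A_t := A + tB$, and study the function $\phi(t) := \sup_{x \in \overline U} f(x, A_t(x), \nabla A_t(x))$. Because $A \in C^2$, the integrand is jointly $C^1$ in $(t, x)$, and $\overline U$ is compact, so Danskin's theorem applies: $\phi$ admits one-sided derivatives at $0$, equal to $\max_{x \in S} \delta_B(x)$ and $\min_{x \in S}\delta_B(x)$ respectively, where $S := \{x \in \overline U : g(x) = \max_{\overline U} g\}$ is the contact set and $\delta_B(x) := \langle \partial f/\partial\omega(x, A, \nabla A), B(x)\rangle + \langle \partial f/\partial\xi(x, A, \nabla A), \nabla B(x)\rangle$ is the first-order velocity of the integrand. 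Absolute minimality of $A$ says precisely that $\phi(t) \geq \phi(0)$ for all $t$ (apply the definition with the perturbations $tB$), so $0$ is a minimum of $\phi$ and hence $\min_{x \in S}\delta_B(x) \leq 0 \leq \max_{x \in S}\delta_B(x)$ for every choice of $U$ and $B$. This portion transcribes verbatim from \cite{Barron2001}.

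The remaining step — deducing the pointwise identity \eqref{BJW} from this family of variational inequalities — is the substantive one, and I expect it to be the main obstacle; it is the heart of \cite[Theorem 5.2]{Barron2001}. The idea is to test at a point $x_0 \in M$: if $\dif g(x_0) = 0$ the identity is trivial, so assume $\dif g(x_0) \neq 0$ and suppose, for contradiction, that $\langle \partial f/\partial\xi(x_0, A(x_0), \nabla A(x_0)), \dif g(x_0) \otimes w_0\rangle \neq 0$ for some $w_0 \in E_{x_0}$. One then lets $U$ shrink around $x_0$, so that the contact set $S$ concentrates, and builds $B$ out of $w_0$ so that $\delta_B$ acquires a strict sign on $S$; this contradicts the inequality above and forces $\langle \partial f/\partial\xi(A, \nabla A), \dif g\rangle$ to annihilate every vector of $E_{x_0}$.

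In adapting this last argument, the only new bookkeeping is that ``constant sections'' must be replaced by sections that are $\nabla$-parallel at $x_0$, and that in a coordinate chart with a local frame the discrepancy between $\nabla$ and the chart's flat derivative, together with the curvature of $\nabla$, contribute terms of size $O(|x - x_0|)$ on $U$ that are dominated by the leading terms as $U$ shrinks. No analytic difficulty beyond that already present in \cite{Barron2001} arises, so the theorem follows.
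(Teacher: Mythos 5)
Your proposal is essentially the paper's own argument: the paper simply asserts that the proof of \cite[Theorem 5.2]{Barron2001} (Danskin's theorem applied to $\phi(t)=\sup_{\overline U}f(A+tB,\nabla A+t\nabla B)$, followed by the localization argument that extracts the pointwise Aronsson identity) transfers verbatim to the bundle-valued setting, and you reproduce that transfer with the correct bookkeeping about the connection and $\nabla$-parallel test sections. No discrepancy; if anything your sketch supplies more detail than the paper, which offers only the one-line reduction to \cite{Barron2001}.
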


To clarify the types of the terms in (\ref{BJW}), notice that $\partial f/\partial \xi(A, \nabla A)$ is an $E$-valued $1$-form, and $f(A, \nabla A)$ is a function.
So $\dif(f(A, \nabla A))$ is a $1$-form, and the inner product contracts the $1$-forms to make a section of $E$.

We shall be interested in the jet bundle of $\Omega^{d - 2}$.
A vector in $J^1(\Omega^{d - 2})$ defines a $d - 1$-linear function on tangent vectors, so it is meaningful to take its antisymmetrization.
Therefore we have the \dfn{antisymmetrizer}
% https://q.uiver.app/#q=WzAsMixbMCwwLCJKXjEoXFxPbWVnYV57ZCAtIDJ9KSJdLFsyLDAsIlxcT21lZ2Fee2QgLSAxfSJdLFswLDEsIlxceGkgXFxtYXBzdG8gXFx4aV57XFxybSBhc30iXV0=
\[\begin{tikzcd}
	{J^1(\Omega^{d - 2})} && {\Omega^{d - 1}}
	\arrow["{\xi \mapsto \xi^{\rm as}}", from=1-1, to=1-3]
\end{tikzcd}\]

\begin{proof}[Proof of Proposition \ref{ABC implies PDE}]
Since the statement is local, we may (by a suitable modification of Proposition \ref{Hodge theorem}) assume that $F = \dif A$ for some $d - 2$-form $A$, and that $F$ has (globally) absolutely minimal comass.

We define a function $f$ on the jet bundle of $\Omega^{d - 2}$ by 
$$f(x, \omega, \xi) := |\xi^{\rm as}|^2.$$
In particular $\xi \in \Omega^{d - 1}_x$, and for any $B \in W^{1, \infty}(M, \Omega^{d - 2})$,
$$f(B, \nabla B) = |(\nabla B)^{\rm as}|^2 = |\dif B|^2.$$
So if $U \subseteq M$ is open and $B \in W^{1, \infty}(U, \Omega^{d - 2})$, then, since $F$ has absolutely minimal comass,
$$f(A + B, \nabla A + \nabla B) = |(\nabla A + \nabla B)^{\rm as}|^2 = |F + \dif B|^2 \geq |F|^2 = |(\nabla A)^{\rm as}|^2 = f(A, \nabla A).$$
So $A$ absolutely minimizes $f$, so that $A$ solves (\ref{BJW}).

It remains to simplify (\ref{BJW}).
Since the antisymmetric $\xi^{\rm as}$ and symmetric $\xi^{\rm sym}$ parts of the $d - 1$-linear functional $\xi$ are orthogonal,
$$\frac{\partial f}{\partial \xi} = \frac{\partial}{\partial \xi} |\xi^{\rm as}|^2 = \frac{\partial}{\partial \xi^{\rm as}} |\xi^{\rm as}|^2 + \frac{\partial}{\partial \xi^{\rm sym}} |\xi^{\rm as}|^2 = \frac{\partial}{\partial \xi^{\rm as}} |\xi^{\rm as}|^2 = 2\xi^{\rm as}.$$
In particular, $\partial f/\partial \xi(A, \nabla A) = 2F$.
Moreover, 
$$\dif(f(A, \nabla A)) = \dif(|F|^2) = 2\langle \nabla F, F\rangle.$$
Plugging this into (\ref{BJW}), we conclude 
$$\iota(2\langle \nabla F, F\rangle^\sharp, 2F) = 0,$$
Applying (\ref{Hodge and interior}) and taking Hodge stars, we conclude (\ref{tight nondivergence noninvolutive tangential}).
\end{proof}

In our next two examples, we show that there are solutions of (\ref{tight nondivergence noninvolutive}) which are not minimizers, and that there are solutions of (\ref{tight nondivergence noninvolutive}) which are absolute minimizers but do not satisfy the Katzourakis correction (\ref{tight nondivergence Katzourakis}).

\begin{example}\label{integrability needed}
Let $V$ be the unit vertical vector to the Hopf fibration% https://q.uiver.app/#q=WzAsMyxbMCwwLCJcXG1hdGhiZiBTXjEiXSxbMSwwLCJcXG1hdGhiZiBTXjMiXSxbMiwwLCJcXG1hdGhiZiBTXjIiXSxbMCwxLCIiLDAseyJzdHlsZSI6eyJ0YWlsIjp7Im5hbWUiOiJob29rIiwic2lkZSI6InRvcCJ9fX1dLFsxLDIsIiIsMCx7InN0eWxlIjp7ImhlYWQiOnsibmFtZSI6ImVwaSJ9fX1dXQ==
\[\begin{tikzcd}
	{\mathbf S^1} & {\mathbf S^3} & {\mathbf S^2}
	\arrow[hook, from=1-1, to=1-2]
	\arrow[two heads, from=1-2, to=1-3]
\end{tikzcd}.\]
Since $V$ is a unit vector field, $F := \star V^\flat$ satisfies $2\langle \nabla F, F\rangle = 0$.
However, $2V = \nabla \times V$ \cite[\S3]{Peralta_Salas_2023}; taking the divergence of both sides, we get $2 \nabla \cdot V = 0$, so $\dif F = 0$.
Therefore $F$ solves (\ref{tight nondivergence noninvolutive}) and, since $H^2(\Sph^3, \RR) = 0$, $F$ is exact.

Thus $F$ is a solution which is cohomologous to $0$ and nonzero.
It follows that $F$ cannot have best comass, cannot be an absolute minimizer on open sets $U$ with $H_2(U, \RR) = 0$, and cannot be tight (in the sense that $F$ cannot be approximated by $p$-tight forms).
\end{example}

\begin{example}\label{no Katzourakis}
Let $M$ be the upper hemisphere of $\Sph^3$, and let $F$ be the dual $2$-form to the Hopf fibration as before, but restricted to $M$.
Since $|F| = 1$ is constant, and $F$ is now competing with other forms with the same restriction to $\partial M = \Sph^2$, rather than $0$, $F$ is absolutely minimal.
But $\star F$ is a contact $1$-form, so $F$ cannot be a solution of (\ref{tight nondivergence Katzourakis}).
In particular, $F$ cannot be the $C^1$ limit of $p$-tight forms.
\end{example}

% In fact, there are many Riemannian manifolds for which uniqueness of (\ref{infty Max}) fails.
% Let $M$ be a closed oriented manifold of dimension $d$, and let $F$ be a continuous exact $d - 1$-form. 
% If $F$ is positive on every plane of a distribution $\mathscr D$, then by \cite[Proposition 4.1]{bangert_cui_2017} there exists a Riemannian metric on $M$ for which $F$ is a calibration, and $F$ restricts to the area form on every plane of $\mathscr D$.
% Reasoning as in Example \ref{integrability needed} (where $\mathscr D$ was the distribution of horizontal planes for the fibration), we see that $F$ is a solution of (\ref{infty Max}).
% Furthermore, $F$ calibrates every integral hypersurface of $\mathscr D$, but $F$ is exact, so $\mathscr D$ admits no complete integral hypersurfaces.
% So it is natural to look for totally noninvolutive distributions: in other words, contact structures.
% The simplest nontrivial example of a contact structure is the distribution of horizontal planes to the Hopf fibration.

%%%%%%%%%%%%%%%
\subsection{A topological obstruction to the Kirszbraun--Valentine theorem}
We next check that the hypothesis $H_{d - 1}(U, \RR) = 0$ cannot be avoided in Theorem \ref{ABC inequality}.
The counterexample is a map which has already appeared in Daskalopoulos and Uhlenbeck's work on $\infty$-harmonic maps \cite[\S8]{daskalopoulos2022}.

\begin{example}\label{exactness needed}
Let $U$ be the cylinder $\Sph^1_\theta \times (-1, 1)_x$ with the hyperbolic metric
$$g := \dif x^2 + \cosh^2 x \dif \theta^2.$$
Similarly to Example \ref{boundaries bad}, the Christoffel symbol ${\Gamma^\theta}_{\theta \theta}$ vanishes, so
$$\Delta_\infty \theta = \langle \nabla \dif \theta, \dif \theta \otimes \dif \theta\rangle = \langle \nabla \dif \theta, \partial_\theta \otimes \partial_\theta \rangle \sech^4 x =  {\Gamma^\theta}_{\theta \theta} \sech^4 x = 0.$$
Notice that $|\dif \theta|^2 = \sech^2 x$.
This attains its maximum only on the closed geodesic $\Lambda := \{x = 0\}$, so the conclusion of Theorem \ref{ABC inequality} fails with $F := \dif \theta$.
It is not a coincidence that $\Lambda$ is the cycle which generates $H_1(U, \RR)$.
\end{example}

The Kirsbraun--Valentine theorem for optimal Lipschitz maps is known to fail for maps $u: \Hyp^2 \to \Hyp^2$ with $\Lip(u) < 1$ \cite[Example 9.6]{Gu_ritaud_2017}.
Example \ref{exactness needed} exhibits another reason why the Kirszbraun--Valentine theorem may fail: a minimizing Lipschitz map may be forced to stretch a closed cycle more than it stretches the geodesic between any two points on the boundary.

%%%%%%%%%%%%%
\section{Derivation of the dual equations}\label{duality derivation}
We are interested in deriving the duality equation (\ref{strong duality}) and the PDE for a $p$-tight form (\ref{p tight}) from the duality theorem for convex optimization.
This is not strictly necessary for the proofs of the main theorems of this paper, but they probably seem unmotivated without it.
Indeed, Thurston's conjecture prescribes the use of ``the max flow min cut principle, convexity, and $L^0 \leftrightarrow L^\infty$ duality,'' and convex duality is a sort of grand generalization of the max flow min cut principle.

We follow \cite[Chapter IV]{Ekeland99}.
For a reflexive Banach space $X$, we denote by $\hat X$ its dual.
If $I: X \to \RR \cup \{+\infty\}$ is a convex function, we introduce its \dfn{Legendre transform}, the convex function
\begin{align*}
	\hat I: \hat X &\to \RR \cup \{+\infty\}\\
	\xi &\mapsto \sup_{x \in X} \langle \xi, x\rangle - I(x).
\end{align*}
We identify the cokernel of a linear map with the kernel of its adjoint.

\begin{theorem}[Fenchel--Rockafellar duality]\label{abstract convex analysis}
Let $\Lambda : X \to Y$ be a bounded linear map between reflexive Banach spaces.
Let $I: Y \to \RR \cup \{+\infty\}$ satisfy:
\begin{enumerate}
\item $I$ and $\hat I$ are strictly convex,
\item $I$ is lower semicontinuous,
\item $I$ is \dfn{coercive}, in the sense that 
$$\lim_{\|y\|_Y \to \infty} I(y) = +\infty,$$
and
\item there exists a point $x \in X$ such that $I(\Lambda(x)) < +\infty$, and $I$ is continuous at $\Lambda(x)$.
\end{enumerate}
Then:
\begin{enumerate}
\item There exists a minimizer $\underline x \in X$ of $I(\Lambda(x))$, unique modulo $\ker \Lambda$.
\item There exists a unique maximizer $\overline \eta$ of $-\hat I(-\eta)$ subject to the constraint $\eta \in \coker \Lambda$.
\item We have \dfn{strong duality}
\begin{equation}\label{abstract strong duality}
I(\Lambda(\underline x)) = -\hat I(-\overline \eta).
\end{equation}
\end{enumerate}
\end{theorem}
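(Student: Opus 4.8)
This is the Fenchel--Rockafellar duality theorem in the standard form of \cite[Chapter IV]{Ekeland99}, so the proof is essentially a matter of reducing to the abstract existence theory for convex variational problems and then identifying the dual problem correctly. First I would set up the primal problem $\inf_{x \in X} I(\Lambda(x))$ and verify that a minimizer $\underline{x}$ exists: this follows from the direct method, using that $I$ is coercive and lower semicontinuous (hypotheses (2)--(3)) and that $\Lambda$ is bounded, so sublevel sets of $x \mapsto I(\Lambda(x))$ are bounded modulo $\ker\Lambda$ and weakly precompact (here reflexivity of $X$ is used), while lower semicontinuity is preserved under weak limits by convexity. Uniqueness modulo $\ker\Lambda$ comes from strict convexity of $I$, since if $\Lambda(x_1) \neq \Lambda(x_2)$ then strict convexity gives a strictly smaller value at the midpoint.

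\emph{Identifying and solving the dual.} The next step is to introduce the perturbation function $\Phi(x, p) := I(\Lambda(x) - p)$ and compute its Legendre transform in the standard way: one finds $\Phi^*(0, \eta) = \hat I(\eta)$ if $\Lambda^* \eta = 0$, i.e.\ $\eta \in \coker\Lambda$ (identifying $\coker\Lambda$ with $\ker\Lambda^*$ as the paper does), and $+\infty$ otherwise. Thus the dual problem is exactly $\sup\{-\hat I(-\eta) : \eta \in \coker\Lambda\}$. Existence of a maximizer $\overline{\eta}$ then follows from the same direct-method argument applied to $\hat I$, which by the standard duality of coercivity and continuity is coercive because $I$ is continuous at some point of the range of $\Lambda$ (hypothesis (4) supplies a point where $I$ is finite and continuous, which translates into coercivity of $\hat I$ on the relevant subspace), and $\hat I$ is automatically convex and lower semicontinuous as a Legendre transform. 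Uniqueness of $\overline{\eta}$ uses strict convexity of $\hat I$ (hypothesis (1)).

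\emph{Strong duality.} Finally, strong duality (\ref{abstract strong duality}) is the no-duality-gap statement: one shows $\inf_x \Phi(x, 0) = \sup_\eta (-\Phi^*(0, \eta))$. The usual argument is that the value function $h(p) := \inf_x \Phi(x, p)$ is convex (since $\Phi$ is jointly convex), and hypothesis (4) guarantees $h$ is finite and continuous at $p = 0$, hence subdifferentiable there; any $\eta \in \partial h(0)$ is a dual optimizer with no gap, and the Fenchel--Young equality characterizes it. One then checks the complementary slackness / extremality relations tying $\overline{\eta}$ to $\underline{x}$, namely $\Lambda(\underline{x})$ and $-\overline{\eta}$ are Legendre-dual, which pins down $\overline{\eta}$ uniquely given $\underline{x}$ and conversely.

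\emph{Main obstacle.} The only genuinely delicate point is the interplay between hypothesis (4) and the coercivity of $\hat I$ needed for dual existence: one must carefully invoke the standard fact that local continuity of a convex function at a point is dual to coercivity of its conjugate, and confirm that restricting to the closed subspace $\coker\Lambda$ does not destroy this (it does not, since $I$ continuous at $\Lambda(x)$ makes $h$ continuous at $0$, and $h^* = \Phi^*(0, \cdot)$ is then coercive). Everything else is bookkeeping with Legendre transforms and the direct method, which I would not belabor.
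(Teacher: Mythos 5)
Your argument is correct and follows essentially the same route as the paper: the paper cites the perturbation-function form of Fenchel--Rockafellar duality (Ekeland--Temam, Chapter IV, Theorem 4.2) as a black box and then handles existence/uniqueness via the direct method and strict convexity on $X/\ker\Lambda$, whereas you unpack that black box by re-deriving the dual problem from the perturbation $\Phi(x,p)$ and obtaining the dual optimizer and the absence of a duality gap from subdifferentiability of the value function at $0$. The one caveat --- shared by the paper's own proof --- is that transferring coercivity of $I$ to $x \mapsto I(\Lambda x)$ viewed on $X/\ker\Lambda$ tacitly uses that $\Lambda$ has closed range (true in the intended application, where $\Lambda = \dif$ on $W^{1,q}$ of a closed manifold).
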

\begin{proof}
This is largely a special case of \cite[Chapter IV, Theorem 4.2]{Ekeland99}.
Let $\mathscr P, \mathscr P^*$ be as in the statement of that theorem.
Then $\mathscr P$ is the problem of minimizing $J(x, \Lambda x)$ where $J(x, y) := I(y)$.
The Legendre transform of $J$ satisfies 
$$\hat J(\xi, \eta) = \begin{cases} \hat I(\eta), & \xi = 0, \\
	+\infty, &\xi \neq 0,
\end{cases}$$
and $\mathscr P^*$ is the problem of maximizing
$$-\hat J(\Lambda^* \eta, -\eta) = \begin{cases}
	-\hat I(-\eta), &\eta \in \ker \Lambda^*, \\
	-\infty, &\eta \notin \ker \Lambda^*,
\end{cases}$$
where $\Lambda^*$ is the adjoint of $\Lambda$.
Then most of the various assertions of this theorem follow immediately from \cite[Chapter IV, Theorem 4.2]{Ekeland99}.
The fact that $\overline \eta \in \coker \Lambda$ follows from the facts that $\overline \eta$ is a solution of $\mathscr P^*$, but any solution of $\mathscr P^*$ must be a member of $\ker \Lambda^*$. 
To establish uniqueness, we use \cite[Chapter II, Proposition 1.2]{Ekeland99}, the fact that $\hat I$ is strictly convex, and the fact that we may view $I \circ \Lambda$ as a strictly convex function on the reflexive Banach space $X/\ker \Lambda$.
\end{proof}

Let $M$ be a closed oriented Riemannian manifold with universal covering $\tilde M$, let $\Gamma := \pi_1(M)$, and let $\alpha \in \Hom(\Gamma, \RR)$.
As always we identify $\alpha$ with a harmonic $1$-form on $M$.
Let $1 < p < \infty$ and $1/p + 1/q = 1$.

\begin{theorem}[convex duality for the $q$-Laplacian]\label{mfmc qLaplacian}
There is an $\alpha$-equivariant function $u$ on the universal cover $\tilde M$, and a closed $d - 1$-form $F$, such that:
\begin{enumerate}
\item $u$ is the unique $\alpha$-equivariant $q$-harmonic function modulo constants.
\item $F$ is the unique $d - 1$-form which minimizes
$$J_\alpha(F) := \frac{1}{p} \int_M \star |F|^p + \int_M \alpha \wedge F$$
subject to the constraint $\dif F = 0$.
\item $F$ is the unique closed $d - 1$-form such that
\begin{equation}\label{strong duality appendix}
	\frac{1}{q} \int_M \star |\dif u|^q + \frac{1}{p} \int_M \star |F|^p + \int_M \dif u \wedge F = 0.
\end{equation}
\end{enumerate}
\end{theorem}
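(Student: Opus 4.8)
The plan is to apply the Fenchel--Rockafellar duality theorem (Theorem~\ref{abstract convex analysis}) with the primal problem being the $q$-Dirichlet energy in the fixed class $\alpha$. Concretely, take $X := W^{1, q}(M)$ and $Y := L^q(M, \Omega^1)$, both reflexive since $1 < q < \infty$, let $\Lambda := \dif \colon X \to Y$, so that $\ker \Lambda$ is the space of constants, and set
\[
I(\beta) := \frac{1}{q} \int_M \star |\alpha + \beta|^q .
\]
Then $I(\Lambda w) = \tfrac{1}{q}\int_M \star |\alpha + \dif w|^q$ is the $q$-energy of the closed $1$-form $\alpha + \dif w$, which lies in the class $\alpha$. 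I would verify the hypotheses of Theorem~\ref{abstract convex analysis}: $I$ and (as computed below) $\hat I$ are strictly convex because $v \mapsto |v|^q$ and $v \mapsto |v|^p$ are; $I$ is continuous, being convex and bounded on bounded subsets of $L^q$; $I$ is coercive by the reverse triangle inequality $\tfrac1q\int_M\star|\alpha+\beta|^q \ge \tfrac1q(\|\beta\|_{L^q} - \|\alpha\|_{L^q})^q$; and taking $w = 0$ gives a point where $I \circ \Lambda$ is finite and $I$ is continuous, since $\alpha$ is smooth on the compact manifold $M$.

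Next I would compute the Legendre transform and the cokernel. Identifying $\widehat Y = L^p(M, \Omega^{d - 1})$ through the bounded nondegenerate pairing $(\beta, F) \mapsto \int_M \beta \wedge F$, substituting $\gamma := \alpha + \beta$, and using the pointwise Young duality $\sup_v \bigl(\langle v, w\rangle - \tfrac1q|v|^q\bigr) = \tfrac1p|w|^p$ together with the fact that $\star$ is a fiberwise isometry, one gets
\[
\hat I(F) = \frac{1}{p} \int_M \star |F|^p - \int_M \alpha \wedge F, \qquad \text{so} \qquad -\hat I(-F) = -\,J_\alpha(F) .
\]
Meanwhile $\Lambda^* \colon L^p(M, \Omega^{d - 1}) \to \widehat X$ acts by $\langle \Lambda^* F, w\rangle = \int_M \dif w \wedge F = -\int_M w\,\dif F$ (Stokes on the closed manifold $M$), which vanishes for all $w \in W^{1, q}(M)$ --- equivalently for all smooth $w$, by density --- precisely when $\dif F = 0$ as a distribution; hence $\coker \Lambda = \ker \Lambda^*$ is exactly the space of closed $L^p$ $(d-1)$-forms.

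Finally I would read off the three assertions from the conclusions of Theorem~\ref{abstract convex analysis}. Conclusion (1) there gives a minimizer $\underline w$ of $I \circ \Lambda$, unique modulo constants; its Euler--Lagrange equation is the $q$-Laplacian for $\dif u := \alpha + \dif \underline w$, and lifting $\dif u$ to a primitive on $\tilde M$ produces the $\alpha$-equivariant $q$-harmonic function $u$, which is unique modulo constants because any such function is a critical point of the strictly convex functional $I \circ \Lambda$, hence equals $\underline w$ modulo constants. Conclusion (2) gives the unique maximizer $\overline F$ of $-\hat I(-F)$ over $\coker \Lambda$, i.e.\ the unique minimizer of $J_\alpha$ among closed $(d-1)$-forms --- this is assertion (2). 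Conclusion (3), strong duality $I(\Lambda \underline w) = -\hat I(-\overline F)$, reads $\tfrac1q\int_M \star|\dif u|^q = -J_\alpha(\overline F)$; since $[\dif u] = \alpha$ and $\overline F$ is closed we may replace $\int_M \alpha \wedge \overline F$ by $\int_M \dif u \wedge \overline F$, and rearranging gives (\ref{strong duality appendix}). For uniqueness in assertion (3): if a closed form $F'$ satisfies (\ref{strong duality appendix}), then $J_\alpha(F') = -\tfrac1q\int_M\star|\dif u|^q$, which by strong duality is the minimal value of $J_\alpha$ over closed forms, so $F' = \overline F$.

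I expect the main obstacle to be the sign-and-Hodge-star bookkeeping in the Legendre transform --- making sure the linear term emerges as $-\int_M \alpha \wedge F$ with the correct sign, so that $-\hat I(-F) = -J_\alpha(F)$ --- together with the clean identification of $\coker \Lambda$ with the constraint $\dif F = 0$, which is where the density of $C^\infty(M)$ in $W^{1,q}(M)$ and Stokes' theorem on a closed manifold are used. Verifying the abstract hypotheses (coercivity, the interior-point condition, reflexivity) is routine given compactness of $M$ and smoothness of $\alpha$.
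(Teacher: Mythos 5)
Your proposal is correct and follows essentially the same route as the paper: the same choice of $X = W^{1,q}(M)$, $Y = L^q(M,\Omega^1)$, $\Lambda = \dif$, the same shifted functional $I(\beta) = \tfrac1q\int_M \star|\alpha+\beta|^q$, the same identification of $\coker\Lambda$ with closed $L^p$ $(d-1)$-forms via the wedge pairing, and the same uniqueness argument for assertion (3) by comparing $J_\alpha$-values. The sign bookkeeping in your Legendre transform ($\hat I(F) = \tfrac1p\int_M\star|F|^p - \int_M\alpha\wedge F$, hence $-\hat I(-F) = -J_\alpha(F)$) matches the paper's computation $\widehat{I_\alpha}(F) = J_\alpha(-F)$.
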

\begin{proof}
Let $\Lambda$ be the map 
% https://q.uiver.app/#q=WzAsMixbMCwwLCJXXnsxLCBxfShNLCBcXG1hdGhiZiBSKSJdLFsyLDAsIkxecShNLCBcXE9tZWdhXjEpIl0sWzAsMSwiXFxtYXRocm0gZCJdXQ==
\[\begin{tikzcd}
	{W^{1, q}(M, \mathbf R)} & {L^q(M, \Omega^1)}
	\arrow["{\mathrm d}", from=1-1, to=1-2]
\end{tikzcd}\]
so $\ker \Lambda$ is the space of constant functions.
We identify the dual space of $W^{1, q}(M, \RR)$ with $W^{-1, p}(M, \Omega^d)$ by integration.
Similarly, we have a bilinear function
\begin{align*}
	L^q(M, \Omega^1) \times L^p(M, \Omega^{d - 1}) &\to \RR \\
	(\psi, F) &\mapsto \int_M \psi \wedge F
\end{align*}
which is easily seen to be a perfect pairing.
Therefore the dual space of $L^q(M, \Omega^1)$ can be identified with $L^p(M, \Omega^{d - 1})$.
For these identifications, an integration by parts shows that the adjoint map $\Lambda^*$ is
% https://q.uiver.app/#q=WzAsMixbMCwwLCJMXnAoTSwgXFxPbWVnYV57ZCAtIDF9KSJdLFsyLDAsIldeey0xLCBwfShNLCBcXE9tZWdhXmQpIl0sWzAsMSwiLVxcbWF0aHJtIGQiXV0=
\[\begin{tikzcd}
	{L^p(M, \Omega^{d - 1})} & {W^{-1, p}(M, \Omega^d)}
	\arrow["{-\mathrm d}", from=1-1, to=1-2]
\end{tikzcd}\]
Thus we have a natural isomorphism
$$\coker \Lambda = L^p(M, \Omega^{d - 1}_{\rm cl}).$$

Next we introduce the function
\begin{align*}
I_\alpha: L^q(M, \Omega^1) &\to \RR \\
\psi &\mapsto \frac{1}{q} \int_M \star |\psi + \alpha|^q.
\end{align*}
Clearly $I_\alpha$ is continuous, strictly convex, finite, and coercive.
As in \cite[Chapter IV, \S2.2]{Ekeland99}, we compute 
$$\widehat{I_\alpha}(F) = \widehat{I_0}(F) - \int_M \alpha \wedge F = \frac{1}{p} \int_M \star |F|^p - \int_M \alpha \wedge F = J_\alpha(-F).$$
In particular, $\widehat{I_\alpha}$ is strictly convex.

We have now checked all the hypotheses of Theorem \ref{abstract convex analysis} and can rattle off conclusions as follows:
\begin{enumerate}
\item There is a minimizer $v \in W^{1, q}(M, \RR)$ of $I_\alpha(\dif v)$. Furthermore, $v$ is unique modulo constants.
We then lift $v$ to a function $\tilde v \in W^{1, q}_\loc(\tilde M, \RR)$ which is invariant under the action of $\Gamma$.
Since $\alpha$ lifts to a harmonic $1$-form $\dif w$ on $\tilde M$, where $w \in C^\infty(\tilde M, \RR)$ is unique modulo constants.
Then we set 
$$u := v + w,$$
which is well-defined up to a constant, and observe that $\dif u = \dif v + \alpha$.
In particular, $u$ is $\alpha$-equivariant and minimizes $I_0(\dif u) = I_\alpha(\dif v)$.
The Euler-Lagrange equation for $I_0 \circ \dif$ is the $q$-Laplacian.
\item There is a unique maximizer $F \in L^p(M, \Omega^{d - 1}_{\rm cl})$ of
$$-J_\alpha(F) = \frac{1}{p} \int_M \star |F|^p + \int_M \alpha \wedge F.$$
It is equivalent that $F$ maximize $-J_\alpha$ and minimize $J_\alpha$.
\item We have (\ref{abstract strong duality}), which reads 
$$0 = I_\alpha(\dif v) + \widehat{I_\alpha}(-F) = I_0(\dif u) + J_\alpha(F) = \frac{1}{q} \int_M |\dif u|^q + \frac{1}{p} \int_M |F|^p + \int_M \alpha \wedge F.$$
Since $F$ is closed, we can replace $\alpha$ with the cohomologous $1$-form $\dif u$ in the last integral to conclude (\ref{strong duality appendix}).
\end{enumerate}

Finally, suppose that $G$ also satisfies (\ref{strong duality appendix}).
Then, replacing $\alpha$ and $\dif u$ as above,
$$J_\alpha(G) = \frac{1}{p} \int_M \star |G|^p + \int_M \alpha \wedge G = -\frac{1}{q} \int_M |\dif u|^q = \frac{1}{p} \int_M \star |F|^p + \int_M \alpha \wedge F = J_\alpha(F).$$
Since $F$ is the unique minimizer of $J_\alpha$, it follows that $G = F$.
\end{proof}

\begin{corollary}
Let $u, F$ be as Theorem \ref{mfmc qLaplacian}.
Then $F$ is $p$-tight, and we have the pointwise duality relation
\begin{equation}\label{dual solution appendix}
F = -|\dif u|^{q - 2} \star \dif u.
\end{equation}
\end{corollary}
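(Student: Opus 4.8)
The plan is to read the pointwise identity (\ref{dual solution appendix}) off the strong duality relation (\ref{strong duality appendix}) by a rigidity argument for Young's inequality, and to obtain $p$-tightness separately from the variational characterisation in Theorem~\ref{mfmc qLaplacian}(2).

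First I would check that $F$ is $p$-tight. By Theorem~\ref{mfmc qLaplacian}(2), $F$ minimises $J_\alpha$ among closed $d-1$-forms, so for every smooth $d-2$-form $B$ the form $F+t\dif B$ is a competitor and $\tfrac{\dif}{\dif t}J_\alpha(F+t\dif B)|_{t=0}=0$, i.e.
$$0=\int_M\star\,|F|^{p-2}\langle F,\dif B\rangle+\int_M\alpha\wedge\dif B.$$
The last integral vanishes by Stokes' theorem since $\alpha$ is closed, so $\int_M\star\,\langle\dif^*(|F|^{p-2}F),B\rangle=0$ for all $B$, whence $\dif^*(|F|^{p-2}F)=0$; together with $\dif F=0$ this is exactly $p$-tightness. (Alternatively, $p$-tightness also falls out of the pointwise identity below together with the exponent arithmetic $(q-1)(p-2)=2-q$, which follows from $pq=p+q$.)

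For the pointwise duality relation I would argue by rigidity in (\ref{strong duality appendix}). Pointwise $\dif u\wedge F$ is a top form; writing $F=\star\delta$ for the unique $1$-form $\delta$ (so $|\delta|=|F|$, and $\delta=(-1)^{d-1}\star F$ since $\star\star=(-1)^{d-1}$ on $1$-forms) gives $\dif u\wedge F=\langle\dif u,\delta\rangle\star 1$, hence the Cauchy--Schwarz bound $\dif u\wedge F\ge-|\dif u|\,|F|\,\star 1$ and, by Young's inequality,
$$\frac1q|\dif u|^q\star 1+\frac1p|F|^p\star 1+\dif u\wedge F\;\ge\;\Bigl(\tfrac1q|\dif u|^q+\tfrac1p|F|^p-|\dif u|\,|F|\Bigr)\star 1\;\ge\;0$$
pointwise a.e. Integrating and invoking (\ref{strong duality appendix}), the integral of this nonnegative density is $0$, so the density vanishes a.e. This forces equality in Young's inequality, hence $|F|=|\dif u|^{q-1}$ a.e., and equality in Cauchy--Schwarz with the minus sign. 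On $\{\dif u=0\}$ the first identity gives $F=0$, while $-|\dif u|^{q-2}\star\dif u$ also vanishes there (its norm is $|\dif u|^{q-1}\to0$), so the claimed identity is trivial. Where $\dif u\ne0$, equality in Cauchy--Schwarz forces $\delta=-\tfrac{|F|}{|\dif u|}\dif u=-|\dif u|^{q-2}\dif u$, and applying $\star$ yields $F=\star\delta=-|\dif u|^{q-2}\star\dif u$, which is (\ref{dual solution appendix}).

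The one delicate point is this rigidity step: one must verify that the a.e. equality cases in Young's and Cauchy--Schwarz's inequalities genuinely pin $F$ down as an $L^p$ form, which is why the coincident zero sets $\{\dif u=0\}=\{F=0\}$ must be isolated and treated by hand, and one must keep track of the Hodge-star signs. Everything else is routine bookkeeping.
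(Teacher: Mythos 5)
Your proof is correct, but it takes a genuinely different route from the paper's. For the pointwise identity, the paper simply plugs the candidate $G := -|\dif u|^{q-2}\star\dif u$ into the strong duality relation (\ref{strong duality appendix}), verifies it holds using the exponent identity $(q-1)p = q$, and then invokes the uniqueness clause of Theorem \ref{mfmc qLaplacian}(3) to conclude $F = G$; $p$-tightness then follows from the pointwise formula via $|F|^{p-2}\star F = \pm\dif u$ and $\dif^2 u = 0$. You instead run the argument forwards: you treat (\ref{strong duality appendix}) as asserting that a pointwise nonnegative density (nonnegative by Cauchy--Schwarz plus Young) integrates to zero, and extract the equality cases $|F| = |\dif u|^{q-1}$ and $\delta = -|\dif u|^{q-2}\dif u$ a.e.; and you get $p$-tightness separately from the Euler--Lagrange equation of $J_\alpha$ (the linear term $\int_M \alpha\wedge\dif B$ dies by Stokes, correctly). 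Both are sound. The paper's route is shorter but leans on the uniqueness statement of the duality theorem as a black box; yours is self-contained and makes the mechanism visible --- it is in fact the standard way one shows that strong duality forces the pointwise extremality relation, and it is the same rigidity idea the paper itself uses later in the $q\to 1$ limit (Theorem \ref{existence 1}, via Lemma \ref{measurable function is 1}). Your sign bookkeeping with $\star\star = (-1)^{d-1}$ on $1$-forms checks out against (\ref{inverse extremality}), and your handling of the set $\{\dif u = 0\}$ is fine (though ``isolated'' is not the right word --- the point is only that the formula is trivially verified there, not that the set is small).
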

\begin{proof}
Suppose that
$$G := -|\dif u|^{q - 2} \star \dif u.$$
Then, we use $(q - 1)p = q$ to compute
$$\frac{1}{q} \int_M \star |\dif u|^q + \frac{1}{p} \int_M \star |G|^p + \int_M \dif u \wedge G = 0,$$
so by the uniqueness results in Theorem \ref{mfmc qLaplacian}, $F = G$.
In particular (\ref{dual solution appendix}) holds.
Finally we use $(q - 1)(p - 2) = 0$ to compute 
$$\dif(|F|^{p - 2} \star F) = \dif(|\dif u|^{(q - 1)(p - 2)} \dif u) = \dif^2 u = 0.$$
Combining this with $\dif F = 0$, we see that $F$ is $p$-tight.
\end{proof}

Finally, let us observe that when $d = 2$, we can locally write $F = \dif v$ for a function $v$.
Then (\ref{dual solution appendix}) is the equation for the conjugate $q$-harmonic that was studied by Daskalopoulos and Uhlenbeck \cite[\S3]{daskalopoulos2020transverse}.

%%%%%%%%%%%%%%%%
\appendix
\section{Measure theory}\label{GMT appendix}
\begin{proposition}[regularized Poincar\'e lemma]\label{Hodge theorem}
Let $M$ be a Riemannian manifold, $x \in M$, and $\ell \in \{1, \dots, d\}$.
Then there exists $r_* > 0$ which depends only on $\Riem_M$ near $x$ and the injectivity radius of $x$, such that for every $0 < r \leq r_*$ and $F \in L^\infty(B(x, r), \Omega^\ell_{\rm cl})$, there exists a H\"older continuous $\ell - 1$-form $A$ such that $F = \dif A$.
\end{proposition}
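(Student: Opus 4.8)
The plan is to transfer the problem to a Euclidean ball by means of geodesic normal coordinates, and there to apply a \emph{regularized} version of the classical Poincar\'e homotopy operator --- one that is bounded from $L^\infty$ into $C^{0,\alpha}$ for every $\alpha \in (0,1)$.

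First I would fix geodesic normal coordinates. By comparison geometry there is $r_* > 0$, depending only on $\inj(x)$ and an upper bound for $|\Riem_M|$ on a fixed neighborhood of $x$, such that $\Phi := \exp_x^{-1}$ is a diffeomorphism of $B(x, r_*)$ onto the Euclidean ball $U := B(0, r_*) \subset \RR^d = T_x M$, and such that $\frac12 \delta_{ij} \le g_{ij} \le 2\delta_{ij}$ as quadratic forms throughout $U$. Since $\dif$ is independent of the metric, $\Phi$ is a smooth diffeomorphism of the chart, and the Riemannian and coordinate pointwise norms on forms are uniformly comparable, it suffices to prove the following Euclidean statement: on $U$, every $\omega \in L^\infty(U, \Omega^\ell_{\rm cl})$ with $\ell \ge 1$ admits some $A \in C^{0,\alpha}(\overline U, \Omega^{\ell - 1})$, for every $\alpha \in (0,1)$, with $\dif A = \omega$. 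Granting this, I would set $A := \Phi^* T\bigl((\Phi^{-1})^* F\bigr)$, where $T$ is the operator below, and restrict to $B(x, r)$ for the given $r \le r_*$.

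Next I would construct $T$. For $a \in U$ let $K_a$ be the classical cone operator based at $a$,
$$(K_a \omega)_z(v_1, \dots, v_{\ell - 1}) := \int_0^1 t^{\ell - 1}\, \omega_{a + t(z - a)}\bigl(z - a, v_1, \dots, v_{\ell - 1}\bigr)\, dt,$$
which is well defined on $U$ because $U$ is convex, and which satisfies the homotopy identity $\dif K_a \omega + K_a \dif \omega = \omega$ for smooth $\omega$. Fixing $\chi \in C^\infty_c(B(0, r_*/2))$ with $\int \chi = 1$, I set $T\omega := \int_{\RR^d} \chi(a)\, K_a \omega\, da$; averaging the homotopy identity against $\chi(a)\,da$ gives $\dif T\omega + T\dif\omega = \omega$ for smooth $\omega$. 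The substitution $y = a + t(z - a)$ (Jacobian $(1-t)^d$ in $a$ for fixed $t$, with $z - a = (z - y)/(1 - t)$) then yields the kernel representation $T\omega(z) = \int_U k(z, y)\, \iota_{z - y} \omega_y\, dy$, where $k(z, y) = \int_0^1 t^{\ell - 1}(1 - t)^{-d - 1} \chi\bigl(\tfrac{y - tz}{1 - t}\bigr)\, dt$. Since $\chi\bigl(\tfrac{y - tz}{1 - t}\bigr)$ vanishes unless $|y - tz| < (1 - t) r_* / 2$, a short computation gives $|k(z, y)| \lesssim |z - y|^{-d}$ and $|\nabla_z k(z, y)| \lesssim |z - y|^{-d - 1}$, so the effective kernel $k(z, y)(z - y)$ of $T$ obeys the weak-singularity bounds $\lesssim |z - y|^{1 - d}$ and $\lesssim |z - y|^{-d}$ respectively. (This is the regularized Poincar\'e operator studied by Iwaniec--Lutoborski and by Costabel--McIntosh, whose computations I would invoke for the detailed kernel estimates.) Because $|z - y|^{1 - d}$ is locally integrable in $y$ uniformly in $z$, $T$ maps $L^\infty(U)$ to $L^\infty(U)$; and splitting $\int_U |k(z, y)(z - y) - k(z', y)(z' - y)|\, dy$ over $\{|z - y| \le 2|z - z'|\}$, where one uses the size bound, and its complement, where one uses the gradient bound, the standard potential estimate gives $T\colon L^\infty(U, \Omega^\ell) \to C^{0,\alpha}(\overline U, \Omega^{\ell - 1})$ for every $\alpha < 1$.

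Finally I would extend the homotopy identity from smooth to $L^\infty$ closed forms by mollification. Given $\omega \in L^\infty(U, \Omega^\ell_{\rm cl})$, componentwise mollification $\omega_\varepsilon := \omega * \rho_\varepsilon$ produces smooth closed forms on $U_\varepsilon := \{z : B(z, \varepsilon) \subset U\}$ with $\|\omega_\varepsilon\|_{L^\infty} \le \|\omega\|_{L^\infty}$ and $\omega_\varepsilon \to \omega$ in $L^p_{\rm loc}$ for every $p < \infty$; since the effective kernel lies in $L^{p'}_{\rm loc}$ uniformly in $z$ whenever $p' < d/(d - 1)$, taking $p > d$ gives $T\omega_\varepsilon \to T\omega$ uniformly on compact subsets of $U$, hence in $\mathcal D'(U)$, while $\dif T\omega_\varepsilon = \omega_\varepsilon \to \omega$ in $\mathcal D'(U)$; therefore $\dif T\omega = \omega$. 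Pulling back by $\Phi$ and restricting then completes the proof. The hard part will be the kernel estimate for the regularized operator --- establishing $|k(z, y)| \lesssim |z - y|^{-d}$, its gradient bound, and the correct support properties; the normal-coordinate reduction, the potential-theoretic H\"older bound, and the mollification step are all routine.
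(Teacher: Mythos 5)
Your proposal is correct and follows essentially the same route as the paper: reduce to a Euclidean ball via the exponential map and apply the regularized (averaged cone) Poincar\'e operator of Iwaniec--Lutoborski/Costabel--McIntosh. The only difference is that the paper obtains H\"older continuity by citing the $L^p \to W^{1,p}$ boundedness of that operator and then using Sobolev embedding with $p > d$, whereas you prove the $L^\infty \to C^{0,\alpha}$ mapping property directly from the kernel bounds; both are valid.
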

\begin{proof}
We may choose $r_*$ so that the exponential map $B_{\RR^d}(0, r_*) \to B(x, r_*)$ is a diffeomorphism which induces topological isomorphisms for every function space under consideration.
Thus it is no loss to replace $B(x, r)$ with the unit euclidean ball $\Ball^d$.
By the main theorem of \cite{Costabel2010}, for every $1 < p < \infty$ there is a continuous right inverse to the exterior derivative
% https://q.uiver.app/#q=WzAsMixbMCwwLCJXXnsxLCBwfShVLCBcXE9tZWdhXntcXGVsbCAtIDF9KSJdLFsxLDAsIkxecChVLCBcXE9tZWdhXlxcZWxsX3tcXHJtIGNsfSkiXSxbMCwxLCJcXG1hdGhybSBkIl1d
\[\begin{tikzcd}
	{W^{1, p}(\Ball^d, \Omega^{\ell - 1})} & {L^p(\Ball^d, \Omega^\ell_{\rm cl})}
	\arrow["{\mathrm d}", from=1-1, to=1-2]
\end{tikzcd}.\]
The result now follows from the Sobolev embedding theorem if we take $p > d$.
\end{proof}

\begin{proposition}\label{portmanteau}
Let $(\mu_n)$ be a sequence of positive Radon measures with $\mu_n(X) \lesssim 1$, and $\mu$ a positive Radon measure, on a compact metrizable space $X$.
Then the following are equivalent:
\begin{enumerate}
\item $\mu_n \weakto^* \mu$.
\item $\liminf_{n \to \infty} \mu_n(X) \geq \mu(X)$, and for every closed $Y \subseteq X$, $\limsup_{n \to \infty} \mu_n(Y) \leq \mu(Y)$.
\item $\limsup_{n \to \infty} \mu_n(X) \leq \mu(X)$, and for every open $U \subseteq X$, $\liminf_{n \to \infty} \mu_n(U) \geq \mu(U)$.
\end{enumerate}
\end{proposition}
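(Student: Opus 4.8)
\emph{Proof sketch.} The plan is to run the cycle of implications $(1)\Rightarrow(2)\Rightarrow(3)\Rightarrow(1)$; along the way it will be transparent that $(2)$ and $(3)$ are equivalent by complementation, so this suffices. Throughout, weak-$*$ convergence is tested against $C(X)$, and since $X$ is compact every $\mu_n$ and $\mu$ is a finite positive measure, with $\mu_n(X)\lesssim 1$.

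For $(1)\Rightarrow(2)$: testing against the constant function $1\in C(X)$ gives $\mu_n(X)\to\mu(X)$, so in particular $\liminf_n\mu_n(X)\geq\mu(X)$. For a closed set $Y\subseteq X$, I fix a compatible metric $d$ on $X$ and set $f_k:=(1-k\,\dist(\cdot,Y))^+\in C(X)$, so that $1_Y\leq f_k\leq 1$ and $f_k\downarrow 1_Y$ pointwise. Then $\mu_n(Y)\leq\int_X f_k\,d\mu_n$, hence $\limsup_n\mu_n(Y)\leq\int_X f_k\,d\mu$ by $(1)$, and letting $k\to\infty$ (dominated convergence, using $\mu(X)<\infty$) yields $\limsup_n\mu_n(Y)\leq\mu(Y)$. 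The equivalence $(2)\Leftrightarrow(3)$ is then pure complementation: taking $Y=X$ in $(2)$ gives $\limsup_n\mu_n(X)\leq\mu(X)$, and for open $U$, writing $\mu_n(U)=\mu_n(X)-\mu_n(X\setminus U)$ and using $\liminf(a_n-b_n)\geq\liminf a_n-\limsup b_n$ with $X\setminus U$ closed gives $\liminf_n\mu_n(U)\geq\mu(U)$; the reverse implication is identical with the roles of open and closed (and of $\liminf$ and $\limsup$) exchanged. In particular, from $(3)$ one also recovers $\liminf_n\mu_n(X)\geq\mu(X)$ (take $U=X$) and the closed-set estimate $\limsup_n\mu_n(Y)\leq\mu(Y)$.

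The substantive step is $(3)\Rightarrow(1)$. Fix $f\in C(X)$. Adding a constant $c\geq\|f\|_\infty$ (whose contribution is $c\,\mu_n(X)\to c\,\mu(X)$ by the total-mass part of $(3)$) and rescaling, I may assume $0\leq f\leq 1$. I then use the layer-cake identity
$$\int_X f\,d\nu=\int_0^1\nu(\{f>t\})\,dt=\int_0^1\nu(\{f\geq t\})\,dt$$
for any finite measure $\nu$; the two integrals coincide because $\int_0^1\nu(\{f=t\})\,dt=0$ by Tonelli (for each $x$, $\{t:f(x)=t\}$ is a single point). The sets $\{f>t\}$ are open and $\{f\geq t\}$ closed. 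Since the integrands $t\mapsto\mu_n(\{f>t\})$ are all dominated by the constant $\sup_n\mu_n(X)\lesssim 1\in L^1([0,1])$, Fatou's lemma and the open-set half of $(3)$ give
$$\liminf_n\int_X f\,d\mu_n\geq\int_0^1\liminf_n\mu_n(\{f>t\})\,dt\geq\int_0^1\mu(\{f>t\})\,dt=\int_X f\,d\mu,$$
and the reverse Fatou lemma together with the closed-set estimate derived from $(3)$ gives $\limsup_n\int_X f\,d\mu_n\leq\int_X f\,d\mu$. Hence $\int_X f\,d\mu_n\to\int_X f\,d\mu$, i.e.\ $\mu_n\weakto^*\mu$.

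I do not expect a genuine obstacle here: the only points needing care are the Fubini/Tonelli juggling that lets one pass freely between $\{f>t\}$ and $\{f\geq t\}$ (so that the open-set hypothesis controls the $\liminf$ and the closed-set hypothesis the $\limsup$), and the uniform domination $\mu_n(X)\lesssim 1$ that legitimizes the reverse Fatou lemma on $[0,1]$ — which is exactly why that hypothesis appears in the statement. $\qed$
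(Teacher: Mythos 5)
Your proof is correct. The paper does not actually argue this proposition from scratch: its entire proof is to rescale so that $\mu_n(X)\leq 1$ and invoke the standard portmanteau theorem for sub-probability measures on a metrizable space (Klenke, Theorem 13.16). What you have written is essentially the textbook proof of that cited result, carried out directly for measures of uniformly bounded (not necessarily unit) total mass: the Urysohn-type approximants $f_k=(1-k\,\dist(\cdot,Y))^+$ for $(1)\Rightarrow(2)$, complementation together with convergence of total masses for $(2)\Leftrightarrow(3)$, and the layer-cake identity plus Fatou and reverse Fatou for $(3)\Rightarrow(1)$ are exactly the standard steps. Your two points of care are well placed: the Tonelli argument that lets you switch between the open sets $\{f>t\}$ and the closed sets $\{f\geq t\}$, and the observation that the hypothesis $\mu_n(X)\lesssim 1$ is what licenses the reverse Fatou lemma, are precisely where the generalization away from probability measures could otherwise go wrong. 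So your route is self-contained where the paper's is a citation, at the cost of length; both are valid.
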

\begin{proof}
After rescaling so that $\mu_n(X) \leq 1$ for every $n$, this follows from \cite[Theorem 13.16]{klenke2013probability}.
\end{proof}

\begin{lemma}\label{measurable function is 1}
Let $\mu$ be a positive finite measure on a measurable space $X$, and let $f$ be a measurable function with $\|f\|_{L^\infty(X, \mu)} \leq 1$.
If $\int_X f \dif \mu = \mu(X)$, then $f = 1$, $\mu$-almost everywhere.
\end{lemma}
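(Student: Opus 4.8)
The plan is to argue by contradiction using the elementary fact that a nonnegative integral can only vanish if the integrand vanishes almost everywhere. First I would observe that the hypothesis $\|f\|_{L^\infty(X,\mu)} \le 1$ means $f \le 1$ $\mu$-almost everywhere, so the function $1 - f$ is nonnegative $\mu$-almost everywhere. Then I would compute
\[
\int_X (1 - f)\dif\mu = \mu(X) - \int_X f\dif\mu = \mu(X) - \mu(X) = 0,
\]
using that $\mu$ is a finite measure so $\int_X 1\dif\mu = \mu(X) < \infty$ and the integral splits. Since the integrand $1 - f$ is nonnegative $\mu$-a.e.\ and has vanishing integral, it must be zero $\mu$-a.e.; hence $f = 1$ $\mu$-almost everywhere.

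The only subtlety worth spelling out is the passage from ``nonnegative with zero integral'' to ``zero almost everywhere'': if $\mu(\{1 - f > 0\}) > 0$, then by continuity of measure $\mu(\{1 - f > 1/n\}) > 0$ for some $n$, whence $\int_X (1-f)\dif\mu \ge \frac{1}{n}\mu(\{1 - f > 1/n\}) > 0$, a contradiction. There is no real obstacle here; this is a standard measure-theory lemma and the proof is a two-line computation once the sign of $1 - f$ is noted. The finiteness of $\mu$ is used only to ensure $\int_X 1\dif\mu$ is finite so that the integral of the difference may be separated.
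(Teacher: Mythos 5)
Your proof is correct. It takes a slightly different route from the paper's: you integrate the nonnegative function $1-f$ over all of $X$, get zero, and invoke the standard fact that a $\mu$-a.e.\ nonnegative function with vanishing integral vanishes $\mu$-a.e.\ (which you correctly justify via the sets $\{1-f>1/n\}$). The paper instead tests $f$ against an arbitrary measurable set $E$, showing $\int_E f\dif\mu \le \mu(E)$ from the $L^\infty$ bound and $\int_E f\dif\mu \ge \mu(E)$ by subtracting the integral over $X\setminus E$, and then concludes from the arbitrariness of $E$. The two arguments are equivalent in substance — the paper's final step ``since $E$ was arbitrary, $f=1$ a.e.'' is itself most naturally verified by exactly the Chebyshev-type estimate you spell out, applied to $E=\{f<1-1/n\}$ — so if anything your version is the more self-contained of the two. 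Both use finiteness of $\mu$ in the same place (to make the constant function integrable so the integrals can be split).
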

\begin{proof}
Let $E \subseteq X$ be a measurable set.
Then $\int_E f \dif \mu \leq \mu(E)$.
Conversely, since $X \setminus E$ is also measurable,
$$\int_E f \dif \mu = \int_X f \dif \mu - \int_{X \setminus E} f \dif \mu \geq \mu(X) - \mu(X \setminus E) = \mu(E),$$
so $\int_E f \dif \mu = \mu(E)$.
Since $E$ was arbitrary, we conclude $f = 1$, $\mu$-almost everywhere.
\end{proof}

\begin{lemma}\label{closed sets are Radon supports}
Let $K \subseteq \RR$ be a nonempty compact set.
Then there exists a Radon measure $\mu$ on $\RR$ whose support is $K$.
\end{lemma}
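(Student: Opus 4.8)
The plan is to build $\mu$ as a weighted sum of point masses concentrated on a countable dense subset of $K$. Since $K \subseteq \RR$ is compact, it is a compact metric space and hence separable; fix a sequence $(x_n)_{n \geq 1}$ which is dense in $K$ (if $K$ is finite, just list its points, repeating if necessary, or truncate the sum). Then set
$$\mu := \sum_{n \geq 1} 2^{-n} \delta_{x_n}.$$
This is a countable sum of nonnegative measures with total mass $\mu(\RR) \leq \sum_n 2^{-n} = 1 < \infty$, so $\mu$ is a finite Borel measure on $\RR$, and every finite Borel measure on $\RR$ is automatically Radon (outer regular on Borel sets, inner regular on Borel sets, finite on compacts). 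So the only real content is the computation of $\supp \mu$.

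First I would check $\supp \mu \subseteq K$: each $x_n$ lies in $K$, so $\mu(\RR \setminus K) = 0$, and since $K$ is closed this forces $\supp \mu \subseteq K$ by definition of the support as the smallest closed set of full measure. Next I would check $K \subseteq \supp \mu$: fix $x \in K$ and an open neighborhood $U$ of $x$. By density of $(x_n)$ in $K$, some $x_n$ lies in $U$, and then $\mu(U) \geq 2^{-n} > 0$. Since $x$ has no neighborhood of zero measure, $x \in \supp \mu$. Combining the two inclusions gives $\supp \mu = K$.

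There is no substantial obstacle here; the statement is a soft point-set fact. The only things to be careful about are degenerate bookkeeping — ensuring the construction still works when $K$ is finite (so that the "dense sequence" may be finite, in which case one takes a finite sum, still a finite Radon measure) — and correctly invoking that finite Borel measures on the (locally compact, second countable) space $\RR$ are Radon. Both are routine. One could alternatively phrase the construction intrinsically, e.g. by pushing forward a probability measure of full support on a Cantor-type model, but the explicit atomic measure above is the cleanest.
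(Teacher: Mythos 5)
Your proof is correct, and it is both simpler and more direct than the one in the paper. The paper argues via the Cantor--Bendixson theorem: it splits $K$ into a countable part and a perfect part, discards interval components, and then invokes the fact that a totally disconnected nonempty compact metrizable space without isolated points is homeomorphic to the Cantor set, which carries a fully supported Radon measure. Your construction $\mu = \sum_n 2^{-n}\delta_{x_n}$ over a dense sequence $(x_n)$ in $K$ avoids all of this structure theory: separability of the compact metric space $K$ is the only input, the two inclusions $\supp\mu \subseteq K$ and $K \subseteq \supp\mu$ are immediate as you argue, and finiteness of $\mu$ gives Radon-ness for free on $\RR$. The only thing the paper's heavier route could buy is a non-atomic measure on the perfect part of $K$, but the lemma as stated (and its sole application, producing a transverse measure supported on a prescribed local leaf space) does not require non-atomicity, so your atomic measure serves equally well.
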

\begin{proof}
By the Cantor-Bendixson theorem, we may assume that $K$ either is countable, or has no isolated points.
Clearly such a measure exists if $K$ is countable, so suppose that $K$ has no isolated points.
By removing those connected components of $K$ which are closed intervals, we may assume that $K$ is totally disconnected.
But then $K$ is a totally disconnected nonempty compact metrizable space without isolated points, so $K$ is homeomorphic to the Cantor set, which admits a Radon measure.
\end{proof}

\subsection{Anzellotti's pairing}
We are going to appeal to results of Anzellotti \cite{Anzellotti1983}, which were formulated in the gradient-divergence-curl formalism of vector calculus on euclidean space.
In fact, these results can be formulated in terms of differential forms, after which they become manifestly invariant under bi-Lipschitz changes of coordinates.
To demonstrate the reformulation process, we go through the details of the construction of the Anzellotti wedge product in the language of differential forms, but leave the reformulation of Anzellotti's trace theorem to the reader.

\begin{definition}
Let $u \in BV(M, \Omega^k)$ and $F \in L^\infty(M, \Omega^{d - k - 1})$.
Assume that $\dif F \in L^d(M, \Omega^{d - k})$.
Then the \dfn{Anzellotti wedge product} of $\dif u$ and $F$ is the distribution $\dif u \wedge F$, such that for every test function $\chi \in C^\infty_\cpt(M, \RR)$,
$$\langle \dif u \wedge F, \chi\rangle := -\int_M \chi u \wedge \dif F - \int_M \dif \chi \wedge u \wedge F.$$
\end{definition}

\begin{proposition}\label{Anzellotti wedge product exists}
Let $u \in BV(M, \Omega^k)$, $F \in L^\infty(M, \Omega^{d - k - 1})$, and $\dif F \in L^d(M, \Omega^{d - k})$.
Then the Anzellotti wedge product $\dif u \wedge F$ is well-defined as a distribution.
In fact, $\dif u \wedge F$ is a Radon measure, and 
\begin{equation}\label{Anzellotti Holder inequality}
\Mass(\dif u \wedge F) \leq \Comass(F) \Mass(\dif u).
\end{equation}
\end{proposition}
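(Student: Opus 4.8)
The plan is to follow Anzellotti's strategy: confirm that the defining formula makes sense, establish the claim when $F$ is smooth via integration by parts, and then pass to general $F$ by mollification. Throughout I use that the borderline Sobolev embedding for $BV$ gives $u \in L^{d/(d-1)}(M, \Omega^k)$. Since $\dif F \in L^d$, H\"older's inequality makes $\chi u \wedge \dif F$ an $L^1$ form for every $\chi \in C^\infty_\cpt(M)$, and $\dif\chi\wedge u\wedge F \in L^1$ because $u \in L^1$ and $F \in L^\infty$; hence $\langle \dif u\wedge F, \chi\rangle$ is a well-defined real number depending linearly on $\chi$, so $\dif u\wedge F$ is at least a distribution. (This is the only place the hypothesis $\dif F \in L^d$ enters, and it is precisely dual to the $L^{d/(d-1)}$ gained from $BV$.)

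Next I would treat smooth $F$. Applying Stokes' theorem to $\dif(u\wedge \chi F)$ when $u$ is also smooth gives
$$\langle \dif u\wedge F,\chi\rangle = \int_M \dif u\wedge(\chi F)$$
(for $k=0$ this is exact; in general it holds up to the sign $(-1)^k$, which does not affect the estimate). This identity extends to all $u \in BV$ by approximating $u$ in $L^1$ by smooth forms $u_j$ with $\dif u_j \weakto^* \dif u$, since both sides are continuous under that convergence — the left because $\dif F$ and $\dif\chi\wedge F$ are smooth and bounded, the right because $\chi F$ is a fixed compactly supported continuous form. Then, by the definition of $\Mass(\dif u)$ and the elementary bound $\Comass(\chi F) \le \|\chi\|_{C^0}\Comass(F)$,
$$|\langle \dif u\wedge F,\chi\rangle| \le \Mass(\dif u)\,\Comass(\chi F) \le \|\chi\|_{C^0}\,\Comass(F)\,\Mass(\dif u),$$
so $\dif u\wedge F$ is a Radon measure with $\Mass(\dif u\wedge F) \le \Comass(F)\Mass(\dif u)$.

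For general $F \in L^\infty$ with $\dif F \in L^d$, I would mollify $F$ (through a finite atlas and a partition of unity) to smooth forms $F_\varepsilon$ with $F_\varepsilon \to F$ in $L^p_\loc$ for all $p<\infty$, $\dif F_\varepsilon \to \dif F$ in $L^d(M)$, and $\Comass(F_\varepsilon) \le (1+o(1))\Comass(F)$. For each fixed $\chi$ the first term in the definition converges since $\dif F_\varepsilon \to \dif F$ in $L^d$ and $\chi u \in L^{d/(d-1)}$, and the second converges by dominated convergence along a subsequence with $F_\varepsilon \to F$ a.e.\ (dominated by $\|F\|_{L^\infty}$, paired against $\dif\chi\wedge u \in L^1$); hence $\langle \dif u\wedge F_\varepsilon,\chi\rangle \to \langle \dif u\wedge F,\chi\rangle$. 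Applying the smooth case to each $F_\varepsilon$ and letting $\varepsilon\to0$,
$$|\langle \dif u\wedge F,\chi\rangle| \le \limsup_{\varepsilon\to0}\|\chi\|_{C^0}\,\Comass(F_\varepsilon)\,\Mass(\dif u) = \|\chi\|_{C^0}\,\Comass(F)\,\Mass(\dif u),$$
so by the Riesz representation theorem $\dif u\wedge F$ is a Radon measure satisfying (\ref{Anzellotti Holder inequality}).

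The only delicate point — and the one I expect to require real care — is the manifold mollification in the last step: one wants a single regularization that commutes with $\dif$ up to an error negligible in $L^d$ (a Friedrichs-type commutator estimate) while not inflating the comass by more than a factor $1+o(1)$, which forces one to work in charts that are nearly isometric (e.g.\ normal coordinates) so that the metric distortion is controlled. If one is content to lose a universal constant, or merely to localize the statement that $\dif u\wedge F$ is a measure, this step is routine; it is the sharp constant $\Comass(F)$ in (\ref{Anzellotti Holder inequality}) that demands the care. Everything else reduces to H\"older's inequality and the definition of mass.
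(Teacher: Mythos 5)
Your proof is correct, but it takes a genuinely different route from the paper's, and the detour is precisely the step you flag as delicate. The paper regularizes only $u$: it takes smooth $u_n \weakto^* u$ in $BV$ and keeps $F$ rough throughout. For smooth $u_n$ the integration by parts
$\langle \dif u_n \wedge F, \chi\rangle = \pm\int_M \chi\, \dif u_n \wedge F$
costs nothing, because $\chi u_n$ is a smooth compactly supported form and hence a legitimate test form for the distributional derivative $\dif F$ --- no mollification of $F$ is ever needed, so the comass-preserving regularization on a manifold (your ``only delicate point'') is entirely avoidable. The paper then passes to the limit using $u_n \weakto u$ in $L^{d/(d-1)}$, $u_n \to u$ in $L^1$, and upper semicontinuity of $\Mass_{\overline U}(\dif u_n)$ via the Portmanteau lemma (Proposition \ref{portmanteau}). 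Your route buys one thing in exchange: once $F$ is continuous, your identity $\langle \dif u \wedge F, \chi\rangle = \int_M \dif u \wedge (\chi F)$ pairs the measure $\dif u$ directly against the continuous form $\chi F$, so the estimate $|\langle \dif u \wedge F, \chi\rangle| \leq \Comass(\chi F)\,\Mass(\dif u)$ is immediate from the definition of mass and you never need semicontinuity of the approximating masses. But you pay for this with the chart-by-chart Friedrichs mollification of $F$ with $\dif F_\varepsilon \to \dif F$ in $L^d$ and $\Comass(F_\varepsilon) \leq (1+o(1))\Comass(F)$; that can be carried out (mollify $\chi_\alpha F$ in normal coordinates, use that $\dif$ has constant coefficients in a chart and that the metric distortion is $1+o(1)$ on the scale of the mollifier), but if you reorganize the argument so that only $u$ is approximated, the whole step disappears.
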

\begin{proof}
We follow the proof of \cite[Theorem 1.5]{Anzellotti1983}.
By the Sobolev embedding theorem for $BV$ \cite[\S5.6]{evans2015measure}, $u \in L^{\frac{d}{d - 1}}(M, \Omega^k)$, the dual space of $L^d(M, \Omega^{d - k})$.
Therefore for every $\chi \in C^\infty_\cpt(M)$, $\langle \dif u \wedge F, \chi\rangle$ is finite, so $\dif u \wedge F$ is well-defined as a distribution.

Suppose that $\supp \chi \Subset U$ for some $U \Subset M$.
If $u$ is sufficiently smooth, then an integration by parts gives 
$$|\langle \dif u \wedge F, \chi\rangle| = \left|\int_M \chi \dif u \wedge F\right| \leq \Comass(F) \|\chi\|_{C^0} \Mass_U(\dif u)$$
where $\Mass_U$ denotes the mass \emph{computed in $U$}.
In general, we can find a sequence $(u_n) \subset C^\infty$ such that $u_n \weakto^* u$ in $BV$.
Then $u_n \weakto u$ in $L^{\frac{d}{d - 1}}$ and $\dif u_n \weakto^* \dif u$ as currents of locally finite mass.
Since we are testing $\dif u$ against the $L^d$ form $\chi F$,
$$|\langle \dif u \wedge F, \chi\rangle| \leq \liminf_{n \to \infty} |\langle \dif u_n \wedge F, \chi\rangle| \leq \Comass(F) \|\chi\|_{C^0} \liminf_{n \to \infty} \Mass_U(\dif u_n).$$
But, by Proposition \ref{portmanteau},
$$\liminf_{n \to \infty} \Mass_U(\dif u_n) \leq \liminf_{n \to \infty} \Mass_{\overline U}(\dif u_n) \leq \Mass(\dif u)$$
which gives the desired estimate (\ref{Anzellotti Holder inequality}), since we only used the $C^0$ norm of $\chi$.
\end{proof}

\begin{proposition}[trace theorem {\cite[Theorem 1.2]{Anzellotti1983}}]\label{integration is welldefined}
Let $\iota: N \to M$ be the inclusion of an oriented Lipschitz hypersurface.
Let $\mathcal X$ be the space of $F \in L^\infty(M, \Omega^{d - 1})$ such that the components of $\dif F$ are Radon measures.
Then the pullback $\iota^*$ of $d - 1$-forms extends to a bounded linear map
$$\iota^*: \mathcal X \to L^\infty(N, \Omega^{d - 1})$$
satisfying the estimate
\begin{equation}\label{integral over chain is linfinity}
	\|\iota^* F\|_{L^\infty(N)} \leq \|F\|_{L^\infty(M)}.
\end{equation}
\end{proposition}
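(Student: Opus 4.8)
The plan is to translate Anzellotti's normal-trace theorem \cite[Theorem 1.2]{Anzellotti1983}, stated for $L^\infty$ vector fields whose distributional divergence is a Radon measure, into the language of differential forms. Under the Hodge dictionary $F \mapsto X := (\star F)^\sharp$, a $d-1$-form $F \in L^\infty(M,\Omega^{d-1})$ corresponds to an $L^\infty$ vector field $X$, and $\dif F$ corresponds, up to sign, to $(\operatorname{div} X)\,\star 1$; hence $F \in \mathcal X$ exactly when $\operatorname{div} X$ is a Radon measure. Likewise, if $N \subset M$ is an oriented hypersurface with unit normal $\normal$, then for smooth $F$ one has $\iota^* F = (X \cdot \normal)\,\dif\mathcal H^{d-1}|_N$, so the pullback of a smooth form \emph{is} the normal trace of the associated vector field; this is the same identification already used in \S\ref{prelims} to compute the comass. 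The proposition therefore amounts to the assertion that this correspondence persists for $F \in \mathcal X$, with the operator-norm bound $\|\iota^* F\|_{L^\infty(N)} \le \|F\|_{L^\infty(M)}$.

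First I would reduce to a local model. Since $N$ is Lipschitz, after a bi-Lipschitz change of coordinates supported near a point of $N$ we may assume $N = \{x^d = 0\}$ and $M$ is a neighbourhood of the origin; a partition of unity subordinate to finitely many such charts then reduces the global statement to the local one, consistency on overlaps being forced by the duality characterisation of the trace below. One must check that both hypothesis and conclusion are bi-Lipschitz invariant: pushforward by a bi-Lipschitz map sends $L^\infty$ forms to $L^\infty$ forms and commutes with $\dif$ in the distributional sense, so it preserves $\mathcal X$, and it transforms $\iota^* F$ by a bounded, boundedly invertible density on $N$, so the $L^\infty$ class of the trace (though not the precise constant) is coordinate-independent. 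The sharp constant $1$ is then recovered by working in coordinates adapted to the Riemannian metric at the point in question, exactly as in the comass computation.

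In the flat model, I would characterise the trace by a one-sided Gauss--Green identity. Writing $M^+ := M \cap \{x^d > 0\}$, the pairing $\dif w \wedge F$ is a Radon measure for $w \in BV$ by the compensated-compactness argument of Proposition \ref{Anzellotti wedge product exists} (which extends when $\dif F$ is merely a Radon measure, provided one tests against continuous $w$), and one shows that there is a unique $\iota^* F \in L^\infty(N,\Omega^{d-1})$ such that
\begin{equation*}
\int_{M^+} w\,\dif F + \int_{M^+} \dif w \wedge F = \int_N w\,\iota^* F
\end{equation*}
for all $w \in C^1_\cpt(M)$, where on the right $w$ is restricted to $N$ and $N$ carries the orientation induced as the boundary of $M^+$. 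Testing this identity against $w = w_\varepsilon$ equal to $1$ on the slab $\{0 < x^d < \varepsilon\}$ and supported in $\{x^d < 2\varepsilon\}$, and letting $\varepsilon \to 0$, the bulk term $\int_{M^+} w_\varepsilon\,\dif F$ is $O\bigl(|\dif F|(\{0 < x^d < 2\varepsilon\})\bigr) \to 0$, while $\int_{M^+}\dif w_\varepsilon \wedge F$ is controlled by $\|F\|_{L^\infty(M)}$ times the transverse variation of $w_\varepsilon$, which stays bounded. Together with the fact that, for a.e.\ $t$, the slice $F|_{\{x^d = t\}}$ is a genuine $L^\infty$ restriction bounded by $\|F\|_{L^\infty(M)}$ (Fubini), this identifies $\iota^* F = \lim_{t \downarrow 0} F|_{\{x^d = t\}}$ in the weak-$*$ sense (Proposition \ref{portmanteau}) and shows it inherits the bound \eqref{integral over chain is linfinity}.

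The main obstacle I anticipate is the bookkeeping of the \emph{atomic part of $\dif F$ carried by $N$ itself}: the traces of $F$ from the two sides $\{x^d > 0\}$ and $\{x^d < 0\}$ differ precisely by this atom, so ``the'' pullback is well-defined only once one fixes the convention that $\iota^*$ is the one-sided trace determined by the orientation (equivalently, by the direction of $\normal$). Keeping this convention coherent under the bi-Lipschitz charts and the partition of unity, verifying linearity and boundedness of the resulting global operator, and checking that it agrees with the notion used elsewhere in the paper (e.g.\ in \S\ref{L infinity calibrations}, where $\dif F = 0$ and the ambiguity evaporates) is where the care is needed. Apart from this, the argument is a routine, if somewhat tedious, reformulation of \cite{Anzellotti1983}.
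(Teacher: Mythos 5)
The paper gives no proof of this proposition -- it cites \cite[Theorem 1.2]{Anzellotti1983} and explicitly leaves the reformulation into the language of differential forms to the reader -- and your proposal carries out exactly that reformulation: the Hodge dictionary $F \mapsto (\star F)^\sharp$, localization to a flat bi-Lipschitz chart, and the one-sided Gauss--Green characterization of the trace are precisely Anzellotti's argument. Your observation about the atomic part of $\dif F$ on $N$ forcing a one-sided orientation convention is a genuine and correctly handled subtlety (harmless in the paper's applications, where $F$ is closed), so the proposal is correct and consistent with the intended proof.
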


% A measurable set $U \subseteq M$ has \dfn{locally finite perimeter} if $1_U \in BV_\loc$.
% The \dfn{measure-theoretic boundary} of $U$ is the set $\partial U$ of $x \in M$ such that for all $\varepsilon > 0$,
% $$0 < \Mass(U \cap B(x, \varepsilon)) < \Mass(B(x, \varepsilon)).$$
% Then $\partial U$ is an integral $d - 1$-current (in particular, the sum of Lipschitz hypersurfaces), with surface measure $\star |\dif 1_U|$; moreover, $\dif 1_U$ is conormal to $\partial U$ \cite[Theorem 4.4]{Giusti77}.
% See also \cite[Appendix A]{BackusCML} for a discussion of related issues in our setting; in particular, we assert that without loss of generality, we may assume that $\partial U$ is closed.
% The superlevel sets $\{u > \lambda\}$ of a function $u \in BV_\loc(M, \RR)$ have locally finite perimeter \cite[Theorem 1.23]{Giusti77}.

% \begin{proposition}[coarea formula]\label{coarea theorem}
% Let $u \in BV_\loc(M, \RR)$, $F \in L^\infty_\loc(M, \Omega^{d - 1})$, and $\chi \in C^0_\cpt(M, \RR)$.
% If $\dif F \in L^d_\loc$, then
% \begin{equation}\label{coarea formula}
% \int_M \chi \dif u \wedge F = \int_{-\infty}^\infty \int_{\partial \{u > \lambda\}} \chi F \dif \lambda.
% \end{equation}
% \end{proposition}
% \begin{proof}
% After identifying $\dif 1_U$ with the integral current $\partial U$, this follows from \cite[Proposition 2.7(ii)]{Anzellotti1983}.
% \end{proof}

\printbibliography

\end{document}